    \theoremstyle{nonumberplain}
    \newtheorem{proof}{
   Proof}
\newcommand{\f}{\mathfrak}
\newcommand{\va}{\varepsilon}
\newtheorem{thm}{Theorem}[section]
\newtheorem{lem}[thm]{Lemma}
\newtheorem{rmk}[thm]{Remark}
\newtheorem{prop}[thm]{Proposition}
{ \theoremstyle{remark} }
\begin{document}
\title
{\textbf{ On the Hydrostatic Limit of the\\
 3D Beris-Edwards
System in a Thin Strip}}
\date{}

\author{Francesco De Anna, Xingyu Li, Marius Paicu and Arghir Zarnescu}

\renewcommand{\thefootnote}{\fnsymbol{footnote}}
\maketitle

{\noindent\small{\bf Abstract:}
 In this paper we consider the 3D  co-rotational Beris-Edwards system modeling the hydrodynamic motion of nematic liquid crystals in a thin strip.  The system contains the incompressible Navier-Stokes, coupled with a parabolic system for matrix-valued functions, the $Q$-tensors. 
     
     We show that under a suitable scaling, corresponding, in the Navier-Stokes part, to the hydrostatic scaling, one obtains in the limit  a partly decoupled system. For the fluid part we obtain  the Prandtl system while for the $Q$-tensors we obtain a non-standard system, involving fluids components and  a non-standard combination of partly dissipative equations and algebraic constraints. 
     
  We prove the convergence of the rescaled system and the well-posedness of the limit in Sobolev spaces.

\vspace{
   1ex}
{\noindent\small{\bf Keywords:}
 Beris-Edwards system, liquid crystals, Q-tensor, hydrostatic approximation.}
    \maketitle
\setcounter{tocdepth}{1}
\tableofcontents

\section{Introduction}
The Beris-Edwards model is a fundamental framework in the study of complex fluid systems, particularly for describing the behavior of nematic liquid crystals. Originally formulated to model the orientational dynamics in nematic phases, the Beris-Edwards system captures the coupling between the hydrodynamics of the fluid and the orientational order of the liquid crystal molecules. This coupling is achieved through the \textit{Q-tensor} representation, which serves as an order parameter describing both the local molecular alignment direction and degree of ordering.

The Q-tensor configuration space consists of all symmetric, traceless $d\times d$ matrices ($d=2$ or 3), defined as:
\[
\mathcal S_0^{(d)}=\{Q \in\mathbb R^{d\times d}: Q=Q^T, \text{tr} Q=0\}
\]
This space accounts for the anisotropic alignment of molecules, providing a richer description of the liquid crystals internal structure than the simpler Ericksen-Leslie model, which uses only a director field to represent molecular alignment. The Ericksen-Leslie system effectively captures uniform, uniaxial configurations, but the Q-tensor formulation of the Beris-Edwards model accommodates complex phenomena like variable degrees of ordering, defect formation, and biaxiality, making it suitable for studying systems where such intricacies are relevant.

Physically, the Beris-Edwards model describes key nematic behaviors such as flow alignment, tumbling, and the dynamics of defects, which can occur when molecules are constrained by geometry, such as within thin films or strip-like domains. These geometries impose spatial restrictions that lead to unique alignment behaviors, often influenced by the anisotropy in elastic constants and boundary conditions. In particular, the  Q-tensor formulation allows for anisotropic interactions between the molecular orientation and flow, effectively capturing the impact of directional viscosities and elasticity. We refer to \cite{ADL,ADL2,BE,DHW,DZ,PZ1,PZ2,SS,WXZ} for more results about Beris-Edwards system, and \cite{CW,E1,E2,E3,HJZ,JL,Leslie1,Leslie2,W} for Ericksen-Leslie system.

The governing equations of the Beris-Edwards system consist of a coupled system involving the incompressible Navier-Stokes equations for the fluid velocity 
$\textbf u$and a Q-tensor evolution equation for the orientational dynamics:
\begin{equation}
\label{eqbe1}
\begin{cases}
\partial_t\textbf{u}+(\textbf{u}\cdot\nabla)\textbf{u}+\nabla P=\varepsilon^2\nu_1\Delta\textbf{u}-\varepsilon^4\nabla\cdot(\nabla Q \odot\nabla Q+(\Delta Q)\cdot Q-Q\cdot\Delta Q)\\
\nabla\cdot\textbf u=0\\
\partial_tQ+\textbf{u}\cdot\nabla Q+Q\Omega-\Omega Q=\varepsilon^2 \nu_2\Delta Q-\left(aQ-b[Q^2-\frac1d\mbox{tr}(Q^2)I]+cQ\mbox{tr}(Q^2)\right)
\end{cases}
\end{equation}
in these equations:
\begin{itemize}
\item[$\bullet$]
$\textbf u$ denotes the fluid velocity field,
\item[$\bullet$]
$Q$ represents the Q-tensor field, encoding molecular orientation and alignment,
\item[$\bullet$]
$P$ is the pressure,
\item[$\bullet$]
$\nu_1,\nu_2$ are viscosity constants,
\item[$\bullet$]
$\Omega:=\frac{\nabla \textbf{u}-\nabla^T \textbf{u}}{2}$ represents the antisymmetric part of the velocity gradient tensor, capturing local rotational effects,
\item[$\bullet$]
 $Q\Omega-\Omega Q$ reflects how the flow field rotates and stretches the orientational order.
\end{itemize}

The $\varepsilon$ coefficient is related to a scale on which certain characteristic patterns appear, the so-called defect patterns. This is an issue of fundamental interest in the study of nematics and the way this parameter is obtained out of a suitable non-dimensionalization is explained in \cite{WXZ}. The system in \cite{WXZ} also involves a certain ``anisotropy parameter" $\xi$ which here is set to be zero, thus providing a simpler system,  an approch also used in many other mathematical works, see for instance \cite{PZ1} and the papers citing it.

The interaction between the Q-tensor and velocity field introduces significant nonlinearity. The Navier-Stokes momentum equation includes stress terms related to gradients in $Q$, while the Q-tensor evolution equation depends on the flow field, particularly through the co-rotational term 
$Q\Omega-\Omega Q$. Additionally, the incompressibility condition $\nabla\cdot u=0$ imposes further constraints, making the analysis particularly challenging in three dimensions.\\
In this study, we focus on a \textit{3D anisotropic Beris-Edwards system} in a thin strip, defined by the domain
\[
\mathcal S^\va:=\{(x,y,z): x,y\in\Omega_0, 0<z<2\pi\va\}\subset \mathbb R^3.
\]
This setup is relevant for examining the hydrostatic limit of the Beris-Edwards system, where the thin geometry amplifies anisotropic effects in the alignment and flow dynamics. We impose either \textit{periodic boundary conditions:}
\[(u,v,w, Q_{ij})|_{z=0}=(u,v,w, Q_{ij})|_{z=2\pi\va}, \quad (\partial_zu,\partial_zv,\partial_zw, \partial_zQ_{ij})|_{z=0}=(\partial_zu,\partial_zv,\partial_zw, \partial_zQ_{ij})|_{z=2\pi\va} \] or \textit{zero boundary conditions} \[(u,v,w, Q_{ij})|_{z=0}=(u,v,w, Q_{ij})|_{z=2\pi\va}=0\] 

In the context of the co-rotational Beris-Edwards system, we assume $\xi=0$, eliminating the parameter that controls the relative strength of aligning versus tumbling effects.

This paper's  primary contribution is to analyze the behavior of this anisotropic Beris-Edwards system in thin geometries, examining its convergence to the hydrostatic limit as $\va\to 0$. The complex coupling between flow and orientation fields, compounded by anisotropic effects and boundary conditions, requires careful treatment to demonstrate convergence in Sobolev spaces. Similarly as \cite{PZZ,L1}, we consider the scaled functions
\begin{equation}
\label{3eqbe2}
\textbf{u}(t,x,y,z)=\left(u(t,x,y,z), v(t,x,y,z), w(t,x,y,z)\right)=\left(u^\varepsilon(t,x,y,z/\varepsilon), v^\varepsilon(t,x,y,z/\varepsilon),\varepsilon  w^\varepsilon(t,x,y,z/\varepsilon)\right)
\end{equation}
and
\begin{equation}
\label{3eqbe3}
P(t,x,y,z)=p^\varepsilon(t,x,y,z/\varepsilon), \quad Q_{\alpha\beta}(t,x,y,z)= \left(\begin{array}{ccc} Q_{11}^\varepsilon & Q_{12}^\varepsilon & \varepsilon Q_{13}^\varepsilon\\
Q_{12}^\varepsilon & Q_{22}^\varepsilon & \varepsilon Q_{23}^\varepsilon\\
\varepsilon Q_{13}^\varepsilon& \varepsilon Q_{23}^\varepsilon & Q^\varepsilon_{33}\end{array}\right)(x,y,z/\varepsilon)
\end{equation}
We first calculate the scaled system about the velocity field $\textbf u$.  Then the scaled system of $\textbf u^\varepsilon$ becomes:
\begin{equation}
\label{eq3u}
\begin{cases}
\partial_tu^{\varepsilon}+u^{\varepsilon}\partial_xu^{\varepsilon}+v^{\varepsilon}\partial_yu^{\varepsilon}+w^{\varepsilon}\partial_zu^{\varepsilon}+\partial_xp^{\varepsilon}=\varepsilon^2\nu\partial_x^2 u^\va+\varepsilon^2\nu\partial_y^2u^\va+\nu \partial_z^2 u^\va-\varepsilon^4(\partial_xM_{11}+\partial_yM_{12}+\partial_zM_{13})\\
\partial_tv^{\varepsilon}+u^{\varepsilon}\partial_xv^{\varepsilon}+v^{\varepsilon}\partial_yv^{\varepsilon}+w^{\varepsilon}\partial_zv^{\varepsilon}+\partial_yp^{\varepsilon}=\varepsilon^2\nu\partial_x^2v^\va+\varepsilon^2\nu\partial_y^2v^\va+\nu\partial_z^2v^\va-\varepsilon^4(\partial_xM_{21}+\partial_yM_{22}+\partial_zM_{23})\\
\varepsilon\partial_tw^{\varepsilon}+\varepsilon u^{\varepsilon}\partial_xw^{\varepsilon}+\varepsilon v^{\varepsilon}\partial_yw^{\varepsilon}+\varepsilon w^{\varepsilon}\partial_zw^{\varepsilon}+\frac{\partial_zp^{\varepsilon}}{\varepsilon}=\varepsilon^3\nu\partial_x^2w^\va+\varepsilon^3\nu\partial_y^2w^\va+\varepsilon\nu\partial_z^2w^\va-\varepsilon^4(\partial_xM_{31}+\partial_yM_{32}+\partial_zM_{33})\\
\partial_xu^{\varepsilon}+\partial_yv^{\varepsilon}+\partial_zw^{\varepsilon}=0\\
\end{cases}
\end{equation} where $M$ is defined in \eqref{def:matrixM} in the Appendix. 

Furthermore, using some of the calculations in Section~\ref{apendixsec:scaling} in the Appendix, we have that  the equations for $Q^{\varepsilon}:=(Q_{11}^\va, Q_{12}^\va, Q_{12}^\va, Q_{22}^\va, Q_{33}^\va, \va Q_{13}^\va, \va Q_{31}^\va, \va Q_{23}^\va, \va Q_{32}^\va)$ can be written as
\begin{equation}
\label{Qij1}
\begin{cases}
\partial_tQ^{\varepsilon}_{11}+u^{\varepsilon}\partial_xQ^{\varepsilon}_{11}+v^{\varepsilon}\partial_yQ^{\varepsilon}_{11}+w^{\varepsilon}\partial_zQ^{\varepsilon}_{11}-\omega^\va_0Q_{12}^\va-\omega^\va_1Q^\va_{13}=\varepsilon^2 \nu_2\partial_x^2Q^{\varepsilon}_{11}+\varepsilon^2 \nu_2\partial_y^2Q^{\varepsilon}_{11}+ \nu_2\partial_z^2Q^{\varepsilon}_{11}\\
-\left(aQ^{\varepsilon}_{11}-b((Q^\va_{11})^2+(Q^\va_{12})^2+\varepsilon^2Q^2_{13,\varepsilon}-\frac{1}{3}\mbox{tr}(Q^2))+cQ_{11}^{\varepsilon}\mbox{tr}(Q^2)\right)\\
\partial_tQ^{\varepsilon}_{12}+u^{\varepsilon}\partial_xQ^{\varepsilon}_{12}+v^{\varepsilon}\partial_yQ^{\varepsilon}_{12}+w^{\varepsilon}\partial_zQ^{\varepsilon}_{12}+\frac 12\omega_0^\va(Q^\va_{11}-Q^\va_{22})-\frac 12\omega_1^\va Q_{23}^\va-\frac 12\omega^\va_2Q_{13}^\va\\
=\varepsilon^2 \nu_2\partial_x^2Q^{\varepsilon}_{12}+\varepsilon^2 \nu_2\partial_y^2Q^{\varepsilon}_{12}+ \nu_2\partial_z^2Q^{\varepsilon}_{12}-\left(aQ^{\varepsilon}_{12}-b(Q_{12}^\va(Q_{11}^\va+Q_{22}^\va)+\varepsilon^2Q_{13}^\va Q_{23}^\va)+cQ_{12}^{\varepsilon}\mbox{tr}(Q^2)\right)\\
\partial_tQ^{\varepsilon}_{22}+u^{\varepsilon}\partial_xQ^{\varepsilon}_{22}+v^{\varepsilon}\partial_yQ^{\varepsilon}_{22}+w^{\varepsilon}\partial_zQ^{\varepsilon}_{22}+\omega^\va_0Q_{12}^\va-\omega^\va_1Q_{23}^\va=\varepsilon^2 \nu_2\partial_x^2Q^{\varepsilon}_{22}+\varepsilon^2 \nu_2\partial_y^2Q^{\varepsilon}_{22}+\partial_z^2 \nu_2Q^{\varepsilon}_{22}\\
-\left(aQ^{\varepsilon}_{22}-b((Q^\va_{12})^2+(Q^\va_{22})^2+\varepsilon^2(Q^\va_{23})^2)-\frac{1}{3}\mbox{tr}(Q^2))+cQ_{22}^{\varepsilon}\mbox{tr}(Q^2)\right)\\
\varepsilon\partial_tQ^{\varepsilon}_{13}+\varepsilon u^{\varepsilon}\partial_xQ^{\varepsilon}_{13}+\varepsilon v^{\varepsilon}\partial_yQ^{\varepsilon}_{13}+\varepsilon w^{\varepsilon}\partial_zQ^{\varepsilon}_{13}-\frac {\va}2 \omega^\va_0Q_{23}^\va+\frac 1{2\va}(\omega_1^\va( Q^\va_{11}-Q^\va_{33})+\omega^\va_2 Q_{12})\\
=\varepsilon^3 \nu_2\partial_x^2Q^{\varepsilon}_{13}+\varepsilon^3 \nu_2\partial_y^2Q^{\varepsilon}_{13}+\varepsilon \nu_2\partial_z^2Q^{\varepsilon}_{13}-\varepsilon\left(aQ^{\varepsilon}_{13}-b(-Q^\varepsilon_{22}Q^\varepsilon_{13}+Q^\varepsilon_{12}Q^\varepsilon_{23})+cQ_{13}^{\varepsilon}\mbox{tr}(Q^2)\right)\\
\varepsilon\partial_tQ^{\varepsilon}_{23}+\varepsilon u^{\varepsilon}\partial_xQ^{\varepsilon}_{23}+\varepsilon v^{\varepsilon}\partial_yQ^{\varepsilon}_{23}+\varepsilon w^{\varepsilon}\partial_zQ^{\varepsilon}_{23}+\frac{\varepsilon}2\omega^\varepsilon_0Q^\varepsilon_{13}+\frac{\omega^\varepsilon_1}{2\varepsilon}Q^\varepsilon_{12}+
\frac{\omega^\varepsilon_2}{2\varepsilon}(Q_{22}^\varepsilon-Q_{33}^\varepsilon)\\
=\varepsilon^3 \nu_2\partial_x^2Q^{\varepsilon}_{23}+\varepsilon^3 \nu_2\partial_y^2Q^{\varepsilon}_{23}+\varepsilon \nu_2\partial_z^2Q^{\varepsilon}_{23}-\varepsilon\left(aQ^{\varepsilon}_{23}-b(Q^\varepsilon_{12}Q^\varepsilon_{13}-Q^\varepsilon_{11}Q^\varepsilon_{23})+cQ_{23}^{\varepsilon}\mbox{tr}(Q^2)\right).\\
\end{cases}
\end{equation} where $\omega^\varepsilon=(\omega^\varepsilon_0,\omega^\varepsilon_1,\omega^\varepsilon_2)$ is defined in \eqref{def:omegavareps}.

\bigskip
\par\noindent{\bf\large Hydrostatic limit of the system}
When studying flows across thin layers, the hydrostatic Beris-Edwards system becomes relevant, focusing on the dynamics within a stratified regime. This hydrostatic framework emphasizes variations along the vertical $z$-direction as the primary drivers of the system’s behavior, with horizontal variations playing a less dominant role. This perspective allows us to examine the influence of the fluid flow on liquid crystal orientation, as well as how the anisotropic properties of the Q-tensor affect fluid dynamics.
In the thin-layer limit, as $\va\to 0$, the domain becomes  $(x,y,z)\in \mathcal S:=\Omega_0\times [0,2\pi]$. The velocity field 
$\textbf u$ in the anisotropic Beris-Edwards system reduces to the Prandtl system, denoted by $\tilde{\textbf{u}}=(u,v,w)$, which governs the boundary layer flow dynamics:
\begin{equation}
\label{Prandtl3}
\begin{cases}
\partial_tu+u\partial_xu+v\partial_yu+w\partial_zu+\partial_xp= \nu_1\partial_z^2u\\
\partial_tv+u\partial_xv+v\partial_yv+w\partial_zv+\partial_yp= \nu_1\partial_z^2v\\
\partial_zp=0\\
\partial_xu+\partial_yv+\partial_zw=0.\\
\end{cases}
\end{equation}
Here $(u,v,w)$ represent the components of the velocity field in the Prandtl layer, and $p$ is the pressure, which is assumed to be vertically invariant, reflecting the hydrostatic approximation.
 Introducing the \textit{semi vorticity} $\omega:=\partial_xv-\partial_yu$, which measures the rotational effect of the horizontal components, and denoting the trace term  $\text{trl}(Q^2)=2(Q_{11}^2+Q_{11}Q_{22}+Q_{22}^2+Q^2_{12})$.  the limit system for the Q-tensor components 
$Q=(Q_{11},Q_{12},Q_{22})$ is given by
\begin{equation}
\label{extra1}
\begin{cases}
\partial_tQ_{11}+\textbf{u}\cdot\nabla Q_{11}
-Q_{12}\omega+Q_{13}\partial_zu= \nu_2\partial_z^2Q_{11}-\left(aQ_{11}-b(Q^2_{11}+Q^2_{12}-\frac 13\mbox{trl}(Q^2))+cQ_{11}\mbox{trl}(Q^2)\right)\\
\partial_tQ_{12}+\textbf{u}\cdot\nabla Q_{12}
+\frac 12(Q_{11}-Q_{22})\omega+Q_{23}\partial_zu+Q_{13}\partial_zv= \nu_2\partial_z^2Q_{12}-\left(aQ_{12}-bQ_{12}(Q_{11}+Q_{22})+cQ_{12}\mbox{trl}(Q^2)\right)\\
\partial_tQ_{22}+\textbf{u}\cdot\nabla Q_{22}+Q_{12}\omega+Q_{23}\partial_zv= \nu_2\partial_z^2Q_{22}-\left(aQ_{22}-b(Q^2_{12}+Q^2_{22}-\frac 13\mbox{trl}(Q^2))+cQ_{22}\mbox{trl}(Q^2)\right)\\
(2Q_{11}+Q_{22})\partial_zu+Q_{12}\partial_zv=0\\
Q_{12}\partial_zu+(Q_{11}+2Q_{22})\partial_zv=0.
\end{cases}
\end{equation}
These equations are supplemented by either \textit{periodic boundary conditions} \[(u,v,w, Q_{ij})|_{z=0}=(u,v,w, Q_{ij})|_{z=2\pi}, \quad (\partial_zu,\partial_zv,\partial_zw, \partial_zQ_{ij})|_{z=0}=(\partial_zu,\partial_zv,\partial_zw, \partial_zQ_{ij})|_{z=2\pi} \] or \textit{zero boundary conditions} \[(u,v,w, Q_{ij})|_{z=0}=(u,v,w, Q_{ij})|_{z=12\pi}=0\] 

From the fourth and fifth equations in \eqref{extra1}, either $\partial_zu=\partial_zv=0$ holds, or  or the following relationship applies:
\begin{equation}
\label{relationQ}
Q_{12}^2=(2Q_{11}+Q_{22})(Q_{11}+2Q_{22}).
\end{equation}
In this paper we primarily consider the case that $|\partial_zu|^2+|\partial_zv|^2>0$ for $z\in [0,2\pi]$, ensuring that the relation\eqref{relationQ} holds.   Additionally, if  $\partial_zu$ and $\partial_zv$ share zeros on $\Omega_0\times [0,2\pi]$, then the ratio $\theta(t,x,y,z):=\frac{\partial_zu}{\partial_zv}$ is well defined, providing a functional relationship between $(Q_{11}, Q_{12}, Q_{22})$ and the velocity gradients.

To further understand the physical implications of the hydrostatic system, we examine how the Q-tensor fields $(Q_{11}, Q_{12}, Q_{22})$ represent the components of the alignment tensor within an anisotropic setting. These components are closely related to the vertical gradients of the velocity components, $\partial_zu$ and $\partial_zv$. Physically, these relationships illustrate how variations in flow gradients influence the molecular alignment and orientation distribution, particularly across layers in the $z$- direction. This setup forms a feedback loop where the anisotropic stress generated by the Q-tensor not only shapes the flow but is itself modified by the flow gradients. This dynamic provides a model that captures realistic interactions between liquid crystals and surrounding fluid within boundary layer-type regions.

The term $\omega$ in equation \eqref{extra1} captures the influence of fluid vorticity on the orientation tensor of liquid crystals. Specifically, $\omega$ represents a component of the fluid’s vorticity, characterizing the local rotation or curl of the velocity field within the Prandtl layer. In this context, vorticity signifies the extent of rotational motion, often associated with shear forces. Including this vorticity term in the Q-tensor equation emphasizes how the rotational aspects of the fluid flow impact the molecular alignment of the liquid crystals, as captured by the Q-tensor. This influence is particularly pronounced in anisotropic, stratified systems where vorticity-induced reorientations can be significant and directionally dependent.

Physically, this setup suggests that molecular alignment within the liquid crystal system is responsive not only to direct shear or compression forces but also to the rotational dynamics of the surrounding fluid. Such responsiveness is vital for applications that rely on controlled alignment in flowing environments, such as in display technologies or materials engineering, where anisotropic properties and flow-driven alignment are essential.

In the co-rotational framework, the interactions between Q-tensor components and flow gradients can be seen as reflecting the system's tendency to either maintain or resist molecular alignment, depending on local shear and strain rates in the fluid. Therefore, the hydrostatic Beris-Edwards system, when coupled with the Prandtl system, offers a powerful approach for exploring complex interactions between flow and liquid crystal orientations. This model has applications in fields requiring precise control over liquid crystal suspensions or stratified flows, including display technologies, materials science, and biofluid dynamics.

\bigskip
\par\noindent{\bf\large Notations} Before stating our results, we first give some notations and functional spaces that will be used through the paper.
\begin{itemize}
\item[$\bullet$]
For the integral $\int_{\Omega_0\times [0,2\pi\varepsilon]} f dxdydz$ for anisotropic case or $\int_{\Omega_0\times [0,2\pi]} f dxdydz$ for hydrostatic case, we write $\int f$ for simplicity.
\item[$\bullet$]
We write $\partial_1, \partial_2, \partial_3$ for $\partial_x, \partial_y, \partial_z$, $\partial_\va:= (\va\partial_x, \va\partial_y, \partial_z)$ and $\Delta_h$ for $\partial_x^2+\partial_y^2$.
\item[$\bullet$]
We write $a\lesssim b$ to mean that there is a uniform constant $C$, such that $a\le Cb$. Here $C$ may be different on different
lines (but does not depend on $\va$).
\item[$\bullet$]
For any $\alpha, m\ge 0$, we denote 
\[(L^2_\alpha, H^m_\alpha, W^{m,\infty}_\alpha)(\mathbb R):=\{f=f(z), z\in\mathbb R; (||f||_{L^2_\alpha}, ||f||_{H^m_\alpha},  ||f||_{W^{m,\infty}_\alpha}):= ||e^{\alpha z}||_{L^2}, ||e^{\alpha z}||_{H^m}, ||e^{\alpha z}||_{W^{m,\infty}}<\infty\}\]
and for any $\beta>0$, define the functional space
\[
E_{\alpha,\beta}=\left\{f=f(x,y,z)=\sum_{k_1,k_2\in\mathbb Z}e^{i(k_1x+k_2y)}f_{k_1, k_2}(z), ||f_{k_1, k_2}||_{L^2_\alpha}\le C_{\alpha,\beta}e^{-\beta\sqrt{k_1^2+k_2^2}}\right\}
\]
\end{itemize}

\subsection{Main results}
\subsubsection{Anisotropic system}
First, for the scaled anistropic system \eqref{eq3u} and \eqref{Qij1}, we have the following existence theorem.
\begin{thm}
\label{convergenceQ0}

Let $\textbf {u}^\va, Q^\va$ be the solution of \eqref{eq3u}, \eqref{Qij1} with initial data $(\textbf {u}^\va_0, Q^\va_0)$. For any $\nu_1,\nu_2, a,c>0, b\in\mathbb R$, we have\\
(1)\underline{\textit {Exponential growth:}} If $||\textbf u^\va_0(\cdot)||_{L^2(\mathcal S^\va)}+||Q^\va_0(\cdot)||_{L^2(\mathcal S^\va)}+||\partial_\va Q^\va_0(\cdot)||_{L^2(\mathcal S^\va)}\lesssim \va$, then there exists a constant $\lambda_1>0$, such that for any $t\ge 0$,
\begin{equation*}
\begin{aligned}
&||\textbf u^\va(t,\cdot)||_{L^2(\mathcal S^\va)}+||Q^\va(t,\cdot)||_{L^2(\mathcal S^\va)}+||\partial_\va Q^\va(t,\cdot)||_{L^2(\mathcal S^\va)}
\le( ||\textbf u^\va_0(\cdot)||_{L^2(\mathcal S^\va)}+||Q^\va_0(\cdot)||_{L^2(\mathcal S^\va)}+||\partial_\va Q^\va_0(\cdot)||_{L^2(\mathcal S^\va)})e^{\lambda_1t}.
\end{aligned}\end{equation*}
Morever, for $a$ large enough, we have\\
(2)\underline{\textit {Polynomial growth:}} If $||\textbf u^\va_0(\cdot)||_{L^2(\mathcal S^\va)}+||Q^\va_0(\cdot)||_{L^2(\mathcal S^\va)}+||\partial_\va Q^\va_0(\cdot)||_{L^2(\mathcal S^\va)}\lesssim \va^\alpha$, where $\alpha\in (\frac 12, 1)$, then for any $t\ge 0$,
\[
||\textbf u^\va(t,\cdot)||_{L^2(\mathcal S^\va)}+||Q^\va(t,\cdot)||_{L^2(\mathcal S^\va)}+||\partial_\va Q^\va(t,\cdot)||_{L^2(\mathcal S^\va)}
\lesssim ( ||\textbf u^\va_0(\cdot)||_{L^2(\mathcal S^\va)}+||Q^\va_0(\cdot)||_{L^2(\mathcal S^\va)}+||\partial_\va Q^\va_0(\cdot)||_{L^2(\mathcal S^\va)})(1+t)^\frac{\alpha}{1-\alpha}.
\]
(3)\underline{\textit {Exponential decay:}} If $||\textbf u^\va(t,\cdot)||_{L^2(\mathcal S^\va)}+||Q^\va(t,\cdot)||_{L^2(\mathcal S^\va)}+||\partial_\va Q^\va(t,\cdot)||_{L^2(\mathcal S^\va)}\to 0$ as $t\to\infty$, then there exist constants $\lambda, C(\va)>0$, such that for any $t\ge 0$,
\begin{equation*}
\begin{aligned}
&||\textbf u^\va(t,\cdot)||_{L^2(\mathcal S^\va)}+||Q^\va(t,\cdot)||_{L^2(\mathcal S^\va)}+||\partial_\va Q^\va(t,\cdot)||_{L^2(\mathcal S^\va)}
\le C_\va e^{-\lambda_2t}.
\end{aligned}\end{equation*}
\end{thm}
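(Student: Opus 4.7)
The plan is a coupled $L^2$ energy method for the triple $(\mathbf u^\varepsilon,Q^\varepsilon,\partial_\varepsilon Q^\varepsilon)$. I would test the three momentum equations of \eqref{eq3u} against $(u^\varepsilon,v^\varepsilon,w^\varepsilon)$ (the $\varepsilon\partial_tw^\varepsilon$ prefactor in the third row is absorbed into the kinetic energy), test the nine-component system \eqref{Qij1} against $Q^\varepsilon$ and also against $-\Delta_\varepsilon Q^\varepsilon$, where $\Delta_\varepsilon:=\varepsilon^2\Delta_h+\partial_z^2$, then integrate over $\mathcal S^\varepsilon$. The pressure and transport terms vanish thanks to $\nabla\cdot\mathbf u^\varepsilon=0$ and the periodic/Dirichlet boundary conditions. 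The decisive algebraic fact is the Beris--Edwards stress--vorticity cancellation: the contribution $-\varepsilon^4\nabla\cdot M$ in the momentum equation, paired with $\mathbf u^\varepsilon$, is exactly annihilated (after one integration by parts) by the co-rotational terms $\omega_k^\varepsilon Q_{ij}$ in \eqref{Qij1} once these are paired with $-\Delta_\varepsilon Q^\varepsilon$; the matching of $\varepsilon$-powers is enforced by the explicit forms of $M$ and $\omega^\varepsilon$ recorded in the appendix together with the scaling \eqref{3eqbe2}--\eqref{3eqbe3}.

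After this cancellation I obtain an identity $\tfrac12\tfrac{d}{dt}E^\varepsilon(t)+D^\varepsilon(t)+P^\varepsilon(t)=0$ with $E^\varepsilon:=\|\mathbf u^\varepsilon\|_{L^2}^2+\|Q^\varepsilon\|_{L^2}^2+\|\partial_\varepsilon Q^\varepsilon\|_{L^2}^2$, $D^\varepsilon$ the sum of the natural viscous terms $\nu_1\|\partial_\varepsilon\mathbf u^\varepsilon\|_{L^2}^2+\nu_2\|\partial_\varepsilon Q^\varepsilon\|_{L^2}^2+\nu_2\|\Delta_\varepsilon Q^\varepsilon\|_{L^2}^2$, and $P^\varepsilon$ the contribution of the bulk potential $aQ-b[Q^2-\tfrac13\mbox{tr}(Q^2)I]+cQ\,\mbox{tr}(Q^2)$. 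Using $\mbox{tr}\,Q=0$, the pointwise identity $H:Q=a|Q|^2-b\,\mbox{tr}(Q^3)+c|Q|^4$ combined with the Young bound $|b\,\mbox{tr}(Q^3)|\le\tfrac{b^2}{4c}|Q|^2+c|Q|^4$ yields $P^\varepsilon\ge -\lambda_0 E^\varepsilon$ with $\lambda_0=\lambda_0(a,b,c)>0$ independent of $\varepsilon$. Together with $D^\varepsilon\ge 0$, Gr\"onwall's lemma produces (1) with $\lambda_1:=\lambda_0$.

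For (2), the hypothesis that $a$ is large (quantitatively, $a>b^2/(4c)$ suffices) turns the same pointwise inequality into genuine coercivity $P^\varepsilon\ge\mu_0\|Q^\varepsilon\|_{L^2}^2+\tfrac{c}{2}\|Q^\varepsilon\|_{L^4}^4$. The cubic remainders generated by pairing the quadratic part of $H$ with $-\Delta_\varepsilon Q^\varepsilon$ are estimated by a thin-slab Gagliardo--Nirenberg bound $\|Q^\varepsilon\|_{L^\infty}\lesssim\|Q^\varepsilon\|_{L^2}^\theta\|\Delta_\varepsilon Q^\varepsilon\|_{L^2}^{1-\theta}$, so the quartic dissipation beats the cubic loss and a polynomial bootstrap with ansatz $E^\varepsilon(t)\le C\varepsilon^{2\alpha}(1+t)^{2\alpha/(1-\alpha)}$ closes exactly in the range $\alpha\in(\tfrac12,1)$: the exponent is dictated by this balance and the condition $\alpha>\tfrac12$ is what lets the initial smallness $\varepsilon^\alpha$ defeat the nonlinear growth. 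Part (3) is then a soft consequence: because $E^\varepsilon(t)\to 0$, one picks $T_0$ large enough that on $[T_0,\infty)$ the cubic/quartic remainders sit below the viscous dissipation; the surviving linear differential inequality $\tfrac{d}{dt}E^\varepsilon\le -2\lambda_2 E^\varepsilon$ follows from the vertical Poincar\'e inequality $\|f\|_{L^2(\mathcal S^\varepsilon)}\lesssim\varepsilon\|\partial_z f\|_{L^2(\mathcal S^\varepsilon)}$, and $C_\varepsilon$ absorbs the finite orbit on $[0,T_0]$.

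The hardest step will be the precise bookkeeping of the $\varepsilon$-powers in the stress--vorticity cancellation: the factors $\omega_1^\varepsilon/\varepsilon$ and $\omega_2^\varepsilon/\varepsilon$ appearing in the fourth and fifth lines of \eqref{Qij1} must match the $-\varepsilon^4\partial_z M_{i3}$ contribution of \eqref{eq3u} exactly, and this matching rests on the symmetry $M=M^T$, the incompressibility $\partial_xu^\varepsilon+\partial_yv^\varepsilon+\partial_zw^\varepsilon=0$, and the anisotropic scaling of the $Q_{13},Q_{23}$ components. Any slack there would destroy the $\varepsilon$-uniform coercivity needed for (1)--(2); by contrast the $\varepsilon$-dependent constant $C_\varepsilon$ in (3) tolerates much looser accounting.
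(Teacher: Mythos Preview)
Your overall architecture---energy on $(\mathbf u^\varepsilon,Q^\varepsilon,\partial_\varepsilon Q^\varepsilon)$, Poincar\'e for (3)---matches the paper, but the central step is wrong: the ``exact annihilation'' you invoke between the stress $-\varepsilon^4\nabla\cdot M$ and the co-rotational terms tested against $-\Delta_\varepsilon Q^\varepsilon$ does \emph{not} hold in the anisotropic scaled system. In the unscaled Beris--Edwards model the relevant cancellation pairs $(\nabla Q\odot\nabla Q,\nabla u)+([\Delta Q,Q],\Omega)$ with the transport and co-rotational contributions tested against $-\Delta Q$. After the scaling \eqref{3eqbe2}--\eqref{3eqbe3} the combination inheriting that identity is $\|\mathbf u\|_{L^2}^2+\varepsilon^4\|\nabla Q\|_{L^2}^2$, which in the new variables becomes (up to a global $\varepsilon$) $\|\mathbf u^\varepsilon\|_{L^2}^2+\varepsilon^2\|\partial_\varepsilon Q^\varepsilon\|_{L^2}^2$, \emph{not} $\|\mathbf u^\varepsilon\|_{L^2}^2+\|\partial_\varepsilon Q^\varepsilon\|_{L^2}^2$. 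That $\varepsilon^2$ mismatch between the functional you (and the paper) use and the one carrying the structural identity is precisely what blocks the exact matching of $\varepsilon$-powers you describe in your last paragraph. Concretely, the $\frac{1}{2\varepsilon}\omega_1^\varepsilon(Q_{11}^\varepsilon-Q_{33}^\varepsilon)$ and $\frac{1}{2\varepsilon}\omega_2^\varepsilon(Q_{22}^\varepsilon-Q_{33}^\varepsilon)$ pieces in the $Q_{13}^\varepsilon,Q_{23}^\varepsilon$ equations, once paired with $\partial_\varepsilon^2 Q^\varepsilon$, leave a residual of order $\tfrac{1}{\varepsilon}\|Q^\varepsilon\|_{L^\infty}\|\partial_\varepsilon\mathbf u^\varepsilon\|_{L^2}\|\partial_\varepsilon^2 Q^\varepsilon\|_{L^2}$ that no symmetric identity removes.

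The paper accordingly does \emph{not} claim a clean identity. It bounds the stress integral $T_1$ directly (it carries a spare $\varepsilon^2$), records the cancellation $T_3=0$ only at the $L^2$ level (antisymmetry of $[\Omega,Q]$), and estimates the transport commutator $T_2$ and the $\partial_\varepsilon$-level co-rotational term $T_4$ separately. The outcome is the \emph{quadratic} differential inequality
\[
\frac{d}{dt}\mathfrak F^\varepsilon(t)\ \le\ \lambda_0\,\mathfrak F^\varepsilon(t)\ +\ \frac{1}{\varepsilon^2}\bigl(\mathfrak F^\varepsilon(t)\bigr)^2,
\]
and the whole theorem hangs on the $\varepsilon^{-2}(\mathfrak F^\varepsilon)^2$ term. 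In (1) the hypothesis $\mathfrak F^\varepsilon\lesssim\varepsilon^2$ reduces it to a linear contribution---this is exactly why the smallness assumption is present; under your scheme (1) would follow with no smallness at all, which should have been a warning sign. In (2) the paper uses the bootstrap $\mathfrak F^\varepsilon\lesssim\varepsilon^{2\alpha}$ to write $\varepsilon^{-2}(\mathfrak F^\varepsilon)^2\lesssim(\mathfrak F^\varepsilon)^{2-1/\alpha}$ and then integrates the sublinear ODE $\frac{d}{dt}\mathfrak F^\varepsilon\lesssim(\mathfrak F^\varepsilon)^{2-1/\alpha}$ to obtain the exponent $\alpha/(1-\alpha)$; your Gagliardo--Nirenberg/``quartic beats cubic'' sketch does not produce this rate and does not explain the window $\alpha\in(\tfrac12,1)$. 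Your argument for (3) is essentially the paper's.
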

\subsubsection{Hydrostatic system}
For the limit system \eqref{extra1} involving $Q=(Q_{11}(t,\cdot), Q_{12}(t,\cdot), Q_{22}(t,\cdot) )$ and under some assumptions of $(u,v,w)$, we present the following theorem about $H^1$ convergence.
\begin{thm}
\label{convergenceQ1}
Suppose the initial data $(u_{in}, v_{in}, w_{in})$ of \eqref{Prandtl3} satisfy $\partial_xv_{in}-\partial_yu_{in}\in L^{\infty}(\Omega_0\times [0,2\pi])$, and $\theta$ satisfies the following conditions:\\
(1)There exists a small enough constant $\varepsilon'>0$, such that for any $t\ge 0, z\in[0,1]$,
$||\theta|-\sqrt 2/2|\ge \varepsilon', \quad ||\theta|-{\sqrt 2}|\ge \varepsilon'$;\\
(2)$\theta, \partial_z\theta, \partial_t\theta$ are uniformly bounded for any $t\ge 0$ and $z\in [0,1]$.\\
Then for any $\nu_2>0, \varepsilon_0\in (0,\nu_2), b\in\mathbb R)$, there exist constants $A_0, C_0>0$, such that if $a>A_0, c>C_0$, then there exist constants $C,\lambda>0$ such that for any $(Q_{11}, Q_{22}, Q_{12})$ as the solution of \eqref{extra1} and any $t\ge 0$,  we have
\[
||(Q(t,\cdot)||^2_{L^2(\mathcal S)}+\int_0^t{e^{\lambda(s-t)}\left((\nu_2-\va_0) ||\partial_zQ(s,\cdot)||^2_{L^2(\mathcal S)}+||Q^2(s,\cdot)||^2_{L^2(\mathcal S)}\right) ds}\le Ce^{-\lambda t}.
\]
moreover, if we suppose that $\nabla\textbf u,\nabla\omega, \nabla\theta$ are unformly bounded in $\Omega$, then  then there exist constants $C',\lambda'>0$, such that
\[
||\nabla Q(t,\cdot)||^2_{L^2(\mathcal S)}+\int_0^t{e^{\lambda'(s-t)}\left( (\nu_2-\varepsilon_0)||\partial_z\nabla Q(s,\cdot)||^2_{L^2(\mathcal S)}+||Q(s,\cdot)\nabla Q(s,\cdot)||^2_{L^2(\mathcal S)}\right)ds}\le Ce^{-\lambda't}.
\]
\end{thm}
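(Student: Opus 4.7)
The plan is to perform a weighted energy estimate on the system \eqref{extra1} for $Q=(Q_{11},Q_{12},Q_{22})$, using the inner product that makes the vorticity coupling algebraically cancel (so that $\omega$ only needs to enter in the $H^1$ bound through lower-order derivative couplings), and then close a Gronwall argument by exploiting the coercive contributions generated by the coefficients $a$ and $c$ for large enough values.

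For the $L^2$ bound, I would multiply the $Q_{11}$--equation by $Q_{11}$, the $Q_{12}$--equation by $2Q_{12}$, and the $Q_{22}$--equation by $Q_{22}$, and integrate over $\mathcal{S}$. The transport terms $\textbf{u}\cdot\nabla Q_{ij}$ integrate to zero by incompressibility $\partial_xu+\partial_yv+\partial_zw=0$ together with the boundary conditions. The vertical Laplacian produces the dissipation $\nu_2\|\partial_z Q\|_{L^2}^2$ (up to the $(1,2,1)$ weights). The vorticity contributions collapse to
\begin{equation*}
-Q_{11}Q_{12}\omega+Q_{12}(Q_{11}-Q_{22})\omega+Q_{22}Q_{12}\omega=0,
\end{equation*}
which is the algebraic incarnation of $Q:(Q\Omega-\Omega Q)=0$ in the Frobenius inner product; this is the key cancellation that lets $\omega$ drop out of the $L^2$ level. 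The cross terms $Q_{13}\partial_z u$, $Q_{23}\partial_z v$, $Q_{23}\partial_z u+Q_{13}\partial_z v$ are handled by expressing $(Q_{13},Q_{23})$ and $(\partial_z u,\partial_z v)$ through the algebraic constraints and the ratio $\theta$, then using Cauchy--Schwarz with an $\varepsilon_0$-small portion of $\nu_2\|\partial_z Q\|_{L^2}^2$.

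Next, the polynomial terms are organised as follows: the $a$-term yields a coercive contribution bounded below by $a\|Q\|_{L^2}^2$; the $c$-term contributes $c\|Q\,\text{trl}(Q^2)\|_{L^1}$, giving a positive quartic term comparable with $\|Q^2\|_{L^2}^2$; the sign-indefinite cubic $b$-term is absorbed via Young's inequality of the form $|b|\,|Q|^3\le \frac{a}{2}|Q|^2+\frac{c}{2}|Q|^4+C(a,b,c)$, which is possible provided $a>A_0$ and $c>C_0$ with $A_0,C_0$ depending only on $|b|$ and on the bounds for $\theta$. Combining everything yields a differential inequality of the form $\frac{d}{dt}\|Q\|_{L^2}^2+\lambda\|Q\|_{L^2}^2+(\nu_2-\varepsilon_0)\|\partial_z Q\|_{L^2}^2+\|Q^2\|_{L^2}^2\le 0$, and Gronwall concludes the first statement. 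The $H^1$ bound is obtained by differentiating the system in $x,y,z$ and repeating the energy estimate at the level of $\nabla Q$: the transport and rotation commutators $[\partial_j,\textbf{u}\cdot\nabla]Q$ and $\partial_j(Q_{12}\omega)$ produce terms that are controlled by the additional $L^\infty$ hypotheses on $\nabla\textbf{u}$, $\nabla\omega$, $\nabla\theta$ multiplied by $\|\nabla Q\|_{L^2}$, while the reaction-term analysis carries over verbatim with the coercive quartic $\|Q\nabla Q\|_{L^2}^2$ replacing $\|Q^2\|_{L^2}^2$.

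The main obstacle is the treatment of the auxiliary fields $Q_{13}, Q_{23}$ and the consistency of the algebraic constraints~\eqref{relationQ}. Because these are not independent unknowns but are determined from $(Q_{11},Q_{12},Q_{22})$ and $\theta$ through the fourth and fifth equations of \eqref{extra1}, every estimate must keep control of the constants uniformly in $\theta$; this is exactly why the non-degeneracy condition $\bigl||\theta|-\sqrt{2}/2\bigr|\ge \varepsilon'$ and $\bigl||\theta|-\sqrt{2}\bigr|\ge \varepsilon'$ is imposed. A secondary subtlety is verifying that the coefficient thresholds $A_0, C_0$ required to absorb the $b$-term stay finite under these constraints; this needs the boundedness of $\theta, \partial_z\theta, \partial_t\theta$ (and, at the $H^1$ level, of $\nabla\theta$) to ensure that the constraint-induced factors appearing in the cross terms remain uniformly bounded.
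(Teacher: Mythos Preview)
Your $(1,2,1)$-weighted scheme correctly kills the $\omega$-coupling at the $L^2$ level, but the treatment of $Q_{13},Q_{23}$ is where the argument breaks down. The algebraic constraints $\eqref{extra1}_4$--$\eqref{extra1}_5$ do \emph{not} express $Q_{13},Q_{23}$ in terms of $(Q_{11},Q_{12},Q_{22})$ and $\theta$; they only relate $(Q_{11},Q_{12},Q_{22})$ among themselves. To actually solve for $Q_{13},Q_{23}$ one has to differentiate the constraints in time, substitute the evolution equations, and invert a $2\times2$ system (this is what the paper does in \eqref{scheme01}--\eqref{scheme1}). The resulting formulas for $Q_{13},Q_{23}$ contain the full right-hand sides $F_{11},F_{12},F_{22}$, in particular the diffusion terms $\nu_2\partial_z^2 Q_{ij}$. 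Hence the ``cross terms'' you propose to absorb by Cauchy--Schwarz are not lower order: they feed back a $\theta$-dependent correction into the principal part of the diffusion.

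This is exactly why the paper does \emph{not} use the constant weights $(1,2,1)$ but the $\theta$-dependent weights $\tilde\theta_1=2\theta^2-1$, $\tilde\theta_2=\theta^2(2-\theta^2)$, $\tilde\theta_3=\tfrac13(2-\theta^2)(2\theta^2-1)$. After eliminating $Q_{13},Q_{23}$ the effective diffusion matrix becomes the rank-one form
\[
\frac{(2-\theta^2)(2\theta^2-1)}{2\theta^4+\theta^2+2}\,(x+\theta^2 y-\theta z)^2,
\]
and only with the $\tilde\theta_i$ weights does this match the constraint $Q_{12}^2=(2Q_{11}+Q_{22})(Q_{11}+2Q_{22})$ to yield coercivity on $\partial_z Q$. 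The non-degeneracy hypothesis $\bigl||\theta|-\sqrt2/2\bigr|\ge\varepsilon'$, $\bigl||\theta|-\sqrt2\bigr|\ge\varepsilon'$ is precisely the condition $(2-\theta^2)(2\theta^2-1)\neq0$, which keeps the sign of this factor definite (the paper then splits into Case~1, $\theta^2\in(1/2,2)$, and Case~2, $\theta^2\notin[1/2,2]$, flipping the overall sign of the energy). With your $(1,2,1)$ weights the modified diffusion is not sign-definite in general, so the estimate cannot close. The $H^1$ step in the paper follows the same pattern with $\nabla Q$ in place of $Q$; your commutator bookkeeping is fine, but it must be performed on the $\tilde\theta_i$-weighted energy, which produces additional $\partial_z\theta,\nabla\theta$ contributions that are then absorbed by taking $a$ large.
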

A few remarks are in order:\\
(1)A typical example is $(u,v,w)$ satisfying these assumptions is the Kolmogorov flow $(ae^{-t}\sin z, be^{-t}\sin z, 0)$, where $\left|\frac ab\right|\ne \frac{\sqrt 2}2, \sqrt 2$.\\
(2)Notably, the limit system of  $Q$ does not contain $Q_{13}, Q_{23}$. Although $Q_{13}, Q_{23}$ can be expressed in terms of 
 $Q_{11}, Q_{12}, Q_{22}$, their expressions include singular terms like $\frac{\partial_t\theta}{\partial_zv}$. However, when focusing solely on $Q_{11}, Q_{12}, Q_{22}$, these singularities are avoided.  Therefore, this theorem does not address the convergence of  $Q_{13}, Q_{23}$. Nonetheless, understanding $Q_{13}$ and $Q_{23}$ is still crucial for establishing the convergence of the anisotropic system toward the hydrostatic system, which will be addressed in Lemma \ref{lemtough1}.

\subsubsection{Limit from anisotropic system to hydrostatic system}
Moreover, we explore the convergence of an anisotropic Beris-Edwards system towards a hydrostatic limit in Sobolev space. Extending the result of 2D case in \cite{L1} to 3D presents unique challenges. Let $\f u^{\va}:=u^{\va}-u, \f v^{\va}:=v^{\va}-v, \f w^{\va}:=w^{\va}-w, \f p^{\va}:=p^{\va}-p$. Then $(\f u^\va, \f v^\va, \f w^\va)$ satisfy the following equations: 
\begin{equation}\label{diff1}
\begin{cases}
\partial_t\f u^\va-\varepsilon^2\nu_1\Delta_h \f u^\va-\nu\partial_z^2 \f u^\va+\partial_x\f p^{\va}=R^\va_1\\
\partial_t\f v^\va-\varepsilon^2\nu_1\Delta_h \f v^\va-\nu\partial_z^2 \f v^\va+\partial_y\f p^{\va}=R^\va_2\\
\va\partial_t\f w^\va-\varepsilon^3\nu_1\Delta_h \f u^\va-\va\nu\partial_z^2 \f u^\va+\partial_x\f p^{\va}=R^\va_3\\
\partial_x\f u^\va+\partial_y\f v^\va+\partial_z\f w^\va=0\\
(\f u^\va, \f v^\va, \f w^\va, \partial_z \f u^\va,  \partial_z \f v^\va,  \partial_z \f w^\va)|_{z=0}=(\f u^\va, \f v^\va, \f w^\va, \partial_z \f u^\va,  \partial_z \f v^\va,  \partial_z \f w^\va)|_{z=1}\,\,\text{or}\,\, (\f u^\va, \f v^\va, \f w^\va)|_{z=0,1}=0
\end{cases}
\end{equation}
where
\begin{equation}\label{diff2}
\begin{cases}
R^\va_1:=\va^2\nu_1 \Delta_h u-(u^\va \partial_x u^\va-u\partial_xu)-(v^\va \partial_y u^\va-v\partial_yu)-(w^\va \partial_z u^\va-w\partial_zu)-\varepsilon^4(\partial_xR_{11}+\partial_yR_{12}+\partial_zR_{13})\\
R^\va_2:=\va^2\nu_1 \Delta_h v-(u^\va \partial_x v^\va-u\partial_xv)-(v^\va \partial_y v^\va-v\partial_yv)-(w^\va \partial_z v^\va-w\partial_zv)-\varepsilon^4(\partial_xR_{21}+\partial_yR_{22}+\partial_zR_{23})\\
R^\va_3:=-\va(\partial_t w-\va^2\nu_1\Delta_hw-\nu_1\partial_z^2w+u^\va\partial_xw^\va+v^\va\partial_yw^\va+w^\va\partial_zw^\va)-\varepsilon^4(\partial_xR_{31}+\partial_yR_{32}+\partial_zR_{33})
\end{cases}
\end{equation}
similarly, let $\f Q_{ij}^\varepsilon:=Q_{ij}^\varepsilon-Q_{ij}, 1\le i,j\le 3$. Then $\f Q_{ij}$ satisfy
\begin{equation}\label{diff3}
\begin{cases}
\partial_t\f Q_{ij}^\va-\varepsilon^2\nu_2\Delta_h \f Q_{ij}^\va-\nu_2\partial_z^2 \f Q_{ij}^\va=\f R_{ij} \,\,\text{for}\,\, i,j\in\{1,2\},\\ \varepsilon\partial_t\f Q_{i3}^\va-\varepsilon^3\nu_2\Delta_h \f Q_{i3}^\va-\varepsilon\nu_2\partial_z^2 \f Q_{i3}^\va=\f R_{i3} \,\,\text{for}\,\, i\in\{1,2\} 
\end{cases} \end{equation}
where
\begin{equation}
\label{diff4}
\begin{cases}
\f R_{11}=\varepsilon^2\nu_2\Delta_hQ_{11}-(u^\varepsilon\partial_xQ_{11}^\va-u\partial_xQ_{11})-(v^\varepsilon\partial_yQ_{11}^\va-v\partial_yQ_{11})-(w^\varepsilon\partial_zQ_{11}^\va-w\partial_zQ_{11})\\
+\omega_0^\va Q_{12}^\va-\omega Q_{12}+\omega_1^\va Q_{13}^\va+\partial_zuQ_{13}
-a\f Q_{11}^\va\\+\frac{b}3((Q_{11}^\va)^2-Q_{11}^2+(Q_{12}^\va)^2-Q_{12}^2-2(Q_{22}^\va)^2+2Q_{22}^2-2Q_{11}^\va Q_{22}^\va+2Q_{11}Q_{22}+\va^2(Q_{13}^\va)^2-2\va^2(Q_{23}^\va)^2)\\
-2c\left(Q_{11}^\va((Q_{11}^\va)^2+(Q_{12}^\va)^2+(Q_{22}^\va)^2+Q_{11}^\va Q_{22}^\va+\varepsilon^2 (Q_{13}^\va)^2+\varepsilon^2 (Q_{23}^\va)^2)-Q_{11}(Q_{11}^2+Q_{12}^2+Q_{22}^2+Q_{11}Q_{22})\right)\\
\f R_{12}=\varepsilon^2\nu_2\Delta_hQ_{12}-(u^\varepsilon\partial_xQ_{12}^\va-u\partial_xQ_{12})-(v^\varepsilon\partial_yQ_{12}^\va-v\partial_yQ_{12})-(w^\varepsilon\partial_zQ_{12}^\va-w\partial_zQ_{12})\\
-\frac {\omega_0^\va} 2(Q_{11}^\va-Q_{22}^\va)+\frac {\omega} 2(Q_{11}-Q_{22})+\omega_1^\va Q_{23}^\va+\partial_zuQ_{23}+\omega^\va_2Q_{13}^\va+\partial_zvQ_{13}\\
-a\f Q_{12}^\va+b(Q_{11}^\va Q_{12}^\va-Q_{11}Q_{12}+Q_{12}^\va Q_{22}^\va-Q_{12}Q_{22}+\varepsilon^2 Q_{13}^\va Q_{23}^\va)\\
-2c\left(Q_{12}^\va((Q_{11}^\va)^2+(Q_{12}^\va)^2+(Q_{22}^\va)^2+Q_{11}^\va Q_{22}^\va+\varepsilon^2 (Q_{13}^\va)^2+\varepsilon^2 (Q_{23}^\va)^2)-Q_{12}(Q_{11}^2+Q_{12}^2+Q_{22}^2+Q_{11}Q_{22})\right)\\
\f R_{22}=\varepsilon^2\nu_2\Delta_hQ_{22}-(u^\varepsilon\partial_xQ_{22}^\va-u\partial_xQ_{22})-(v^\varepsilon\partial_yQ_{22}^\va-v\partial_yQ_{22})-(w^\varepsilon\partial_zQ_{22}^\va-w\partial_zQ_{22})\\
-\omega_0^\va Q_{12}^\va+\omega Q_{12}+\omega_2^\va Q_{23}^\va+\partial_zvQ_{23}
-a\f Q_{22}^\va\\+\frac{b}3((Q_{22}^\va)^2-Q_{22}^2+(Q_{12}^\va)^2-Q_{12}^2-2(Q_{11}^\va)^2+2Q_{11}^2-2Q_{11}^\va Q_{22}^\va+2Q_{11}Q_{22}+\va^2(Q_{23}^\va)^2-2\va^2(Q_{13}^\va)^2)\\
-2c\left(Q_{22}^\va((Q_{11}^\va)^2+(Q_{12}^\va)^2+(Q_{22}^\va)^2+Q_{11}^\va Q_{22}^\va+\varepsilon^2 (Q_{13}^\va)^2+\varepsilon^2 (Q_{23}^\va)^2)-Q_{22}(Q_{11}^2+Q_{12}^2+Q_{22}^2+Q_{11}Q_{22})\right)\\
\f R_{13}=-\va(\partial_t Q_{13}-\varepsilon^2\nu_2\Delta_hQ_{13}-\nu_2\partial_z^2Q_{13}+u^{\varepsilon}\partial_xQ^{\varepsilon}_{13}+v^{\varepsilon}\partial_yQ^{\varepsilon}_{13}+w^{\varepsilon}\partial_zQ^{\varepsilon}_{13})\\
+\frac \va 2\omega^\va_0Q^\va_{23}-\frac{\varepsilon}2({(2Q_{11}^\va+Q_{22}^\va)\partial_xw^\va}+Q_{12}^\va\partial_yw^\va)+\frac 1{2\varepsilon}((2Q_{11}^\va+Q_{22}^\va)\partial_zu^\va-(2Q_{11}+Q_{22})\partial_zu+Q^\va_{12}\partial_zv^\va-Q_{12}\partial_zv)\\
+\varepsilon^3 \nu_2\partial_x^2Q^{\varepsilon}_{13}+\varepsilon^3 \nu_2\partial_y^2Q^{\varepsilon}_{13}+\varepsilon \nu_2\partial_z^2Q^{\varepsilon}_{13}-\varepsilon\left(aQ^{\varepsilon}_{13}-b(-Q^\varepsilon_{22}Q^\varepsilon_{13}+Q^\varepsilon_{12}Q^\varepsilon_{23})+cQ_{13}^{\varepsilon}\mbox{tr}(Q^2)\right)\\
\f R_{23}=-\va(\partial_t Q_{23}-\varepsilon^2\nu_2\Delta_hQ_{23}-\nu_2\partial_z^2Q_{23}+u^{\varepsilon}\partial_xQ^{\varepsilon}_{23}+v^{\varepsilon}\partial_yQ^{\varepsilon}_{23}+w^{\varepsilon}\partial_zQ^{\varepsilon}_{23})\\
-\frac \va 2\omega^\va_0Q^\va_{13}-\frac{\varepsilon}2{(Q_{12}^\va\partial_xw^\va}+(Q_{11}^\va+2Q_{22}^\va)\partial_yw^\va)+\frac 1{2\varepsilon}(Q_{12}^\va\partial_zu^\va-Q_{12}\partial_zu+(Q^\va_{11}+2Q^\va_{22})\partial_zv^\va-(Q_{11}+2Q_{22})\partial_zv)\\
+\varepsilon^3 \nu_2\partial_x^2Q^{\varepsilon}_{23}+\varepsilon^3 \nu_2\partial_y^2Q^{\varepsilon}_{23}+\varepsilon \nu_2\partial_z^2Q^{\varepsilon}_{23}-\varepsilon\left(aQ^{\varepsilon}_{23}-b(Q^\varepsilon_{12}Q^\varepsilon_{13}-Q^\varepsilon_{11}Q^\varepsilon_{23})+cQ_{23}^{\varepsilon}\mbox{tr}(Q^2)\right)
\end{cases}
\end{equation}
and $(\f Q_{ij}, \partial_z\f Q_{ij})|_{z=0}= (\f Q_{ij}, \partial_z\f Q_{ij})|_{z=2\pi}\,\,\text{or}\,\, (\f Q_{ij})|_{z=0,2\pi}=0$.\\

Finally, we prove the following theorem concerning the from the scaled anisotropic system   towards the hydrostatic Prandtl system \eqref{Prandtl3} and hydrostatic Q-tensor system \eqref{extra1}.
\begin{thm}
\label{thmconvergence1}
Let $(u,v,w)$ be the solution of \eqref{Prandtl3} and $(Q_{11}, Q_{12}, Q_{22})$ are the solution of \eqref{extra1}, and $(Q_{13}, Q_{23})$ are given by \eqref{toughb5}, \eqref{toughb6}. We suppose that $a>>b, a>>1$.  
Moreover, we give the following assumptions: \\
(1)
\begin{equation}
\label{convergencecondition1}
||(\partial_zu, \partial_zv, \va\partial_zw)||_{L^\infty}, \quad||(Q_{11}, Q_{12}, Q_{22}, \va Q_{13}, \va Q_{23})||_{L^\infty},\quad ||\partial_\va(Q_{11}, Q_{12}, Q_{22}, \va Q_{13}, \va Q_{23})||_{L^\infty}\lesssim\va.
\end{equation}
(2) $\int_0^\infty F^\va(s)ds\lesssim \va^2$, where $F^\va(t)$ is defined in \eqref{toughc18}.\\
Let $(\f u^\va, \f v^\va, \f w^\va, \f Q_{11}^\va, \f Q_{12}^\va, \f Q_{22}^\va, \f Q_{13}^\va, \f Q_{23}^\va)$ be the solution of the equations \eqref{diff1} to \eqref{diff4}, with the initial data \[(\f u^\va_0, \f v^\va_0, \f w^\va_0, \f Q_{11,0}^\va, \f Q_{12,0}^\va, \f Q_{22,0}^\va, \f Q_{13,0}^\va, \f Q_{23,0}^\va).\] If there exists a constant $C_{int}<<\min(\nu_1,\nu_2)$ that does not depend on $\va$, such that
\begin{equation*}
\begin{aligned}
&\f G_0^\va:=||(\f u^\va_0, \f v^\va_0, \va\f w^\va_0)||_{L^2}+||(\f Q_{11,0}^\va, \f Q_{12,0}^\va, \f Q_{22,0}^\va, \va\f Q_{13,0}^\va, \va\f Q_{23,0}^\va)||_{L^2}\\+&||\va\nabla_h(\f Q_{11,0}^\va, \f Q_{12,0}^\va, \f Q_{22,0}^\va, \va\f Q_{13,0}^\va, \va\f Q_{23,0}^\va)||_{L^2}+||\partial_z(\f Q_{11,0}^\va, \f Q_{12,0}^\va, \f Q_{22,0}^\va, \va\f Q_{13,0}^\va, \va\f Q_{23,0}^\va)||_{L^2}\le C_{int}\cdot\va
\end{aligned}
\end{equation*}
then there exists a constant $\f C_l$ that does not depend on $\va$, such that for any $t\ge 0$,
\begin{equation}
\begin{aligned}
&||(\f u^\va, \f v^\va, \va \f w^\va) ||_{L^2}+||(\f Q_{11}^\va, \f Q_{12}^\va, \f Q_{21}^\va, \f Q_{22}^\va, \f Q_{33}^\va ,\va\f Q_{13}^\va, \va \f Q_{23}^\va ,\va\f Q_{31}^\va, \va \f Q_{32}^\va) ||_{L^2}\\
+&||\partial_\va(\f Q_{11}^\va, \f Q_{12}^\va, \f Q_{21}^\va, \f Q_{22}^\va, \f Q_{33}^\va ,\va\f Q_{13}^\va, \va \f Q_{23}^\va ,\va\f Q_{31}^\va, \va \f Q_{32}^\va) ||_{L^2}\lesssim \f G^\va(0)+ \left(\int_0^t F^\va(s)ds\right)^\frac 12.
\end{aligned}
\end{equation}
\end{thm}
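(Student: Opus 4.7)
The plan is to derive a coupled energy inequality for the full error system \eqref{diff1}-\eqref{diff4} that mirrors the scaling of \eqref{convergencecondition1}: at the $L^2$ level, I test the momentum equations against $(\mathfrak{u}^\va,\mathfrak{v}^\va,\varepsilon\mathfrak{w}^\va)$ and the Q-equations against $\mathfrak{Q}_{ij}^\va$ for $(i,j)\in\{1,2\}^2$ and $\varepsilon\mathfrak{Q}_{i3}^\va$ for the vertical components; I then repeat the argument with $\partial_z$ applied, and separately with $\varepsilon\nabla_h$ applied, to the Q-equations, so as to bound every quantity appearing in $\mathfrak{G}_0^\va$. The pressure contributions cancel via $\partial_x\mathfrak{u}^\va+\partial_y\mathfrak{v}^\va+\partial_z\mathfrak{w}^\va=0$ and the boundary conditions; the transport pieces $\textbf{u}^\va\cdot\nabla(\cdot)$ vanish after testing thanks to $\nabla\cdot\textbf{u}^\va=0$; the remaining convective mismatches of the form $\mathfrak{u}^\va\partial_xu$, $u^\va\partial_x\mathfrak{u}^\va$, etc., are controlled through the uniform $L^\infty$-bounds on $(u,v,w)$ and on $\partial_z(u,v,\varepsilon w)$ from \eqref{convergencecondition1}; the $\varepsilon^4\partial_kM_{kj}$ stress terms and the $\varepsilon$-order forcing in $R_3^\va$ are absorbed in the external source $F^\va$.

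For the Q-tensor part the positive structure is the parabolic dissipation $\nu_2(\|\partial_z\mathfrak{Q}^\va\|_{L^2}^2+\varepsilon^2\|\nabla_h\mathfrak{Q}^\va\|_{L^2}^2)$ together with the linear damping $-a\|\mathfrak{Q}^\va\|_{L^2}^2$. The $b$-polynomial mismatches decompose as $(Q^\va+Q)\mathfrak{Q}^\va$, the $c$-cubics as $((Q^\va)^2+Q\cdot Q^\va+Q^2)\mathfrak{Q}^\va$, and combined with the $L^\infty$ bound on $Q$ from \eqref{convergencecondition1} they are all absorbed by the $a$-damping once $a\gg \max(|b|,1)$. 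The $\omega$-coupling $\omega_0^\va Q_{12}^\va-\omega Q_{12}$ splits as $\omega_0^\va\mathfrak{Q}_{12}^\va+(\omega_0^\va-\omega)Q_{12}$; since $\omega_0^\va-\omega$ is proportional to $\partial_x\mathfrak{v}^\va-\partial_y\mathfrak{u}^\va$ modulo $\varepsilon^2$-corrections, the second piece is absorbed via Young against $\varepsilon^2\nu_1\|\nabla_h(\mathfrak{u}^\va,\mathfrak{v}^\va)\|^2$, using $\|Q\|_{L^\infty}\lesssim\varepsilon$.

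The main obstacle is the seemingly singular $1/(2\varepsilon)$-factor appearing in $\mathfrak{R}_{13},\mathfrak{R}_{23}$. The key observation is that the algebraic constraints in \eqref{extra1}, namely $(2Q_{11}+Q_{22})\partial_zu+Q_{12}\partial_zv\equiv 0$ and its companion, cancel the limit piece exactly, so that
\begin{equation*}
\tfrac{1}{2\varepsilon}\bigl[(2Q_{11}^\va+Q_{22}^\va)\partial_zu^\va-(2Q_{11}+Q_{22})\partial_zu+Q_{12}^\va\partial_zv^\va-Q_{12}\partial_zv\bigr]
\end{equation*}
can be rewritten as $\tfrac{1}{2\varepsilon}\bigl[(2\mathfrak{Q}_{11}^\va+\mathfrak{Q}_{22}^\va)\partial_zu^\va+\mathfrak{Q}_{12}^\va\partial_zv^\va+(2Q_{11}+Q_{22})\partial_z\mathfrak{u}^\va+Q_{12}\partial_z\mathfrak{v}^\va\bigr]$. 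Multiplying by $\varepsilon\mathfrak{Q}_{13}^\va$ kills the $1/\varepsilon$, and the four resulting pairings are controlled by: (i) $\|\partial_z(u,v)\|_{L^\infty}\|\mathfrak{Q}^\va\|\|\mathfrak{Q}_{13}^\va\|$, absorbed by the $a$-damping thanks to the smallness $\|\partial_z(u,v)\|_{L^\infty}\lesssim \varepsilon$; and (ii) $\|Q\|_{L^\infty}\|\partial_z\mathfrak{u}^\va\|\|\mathfrak{Q}_{13}^\va\|$, absorbed into $\nu_1\|\partial_z\mathfrak{u}^\va\|^2+a\|\mathfrak{Q}_{13}^\va\|^2$ via Young, using $\|Q\|_{L^\infty}\lesssim\varepsilon$ from \eqref{convergencecondition1}. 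The symmetric calculation handles $\mathfrak{R}_{23}$.

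Summing all estimates with weights tuned to absorb every indefinite cross-term into the dissipation, I arrive at a differential inequality of the form $\tfrac{d}{dt}(\mathfrak{G}^\va)^2 + c_0\mathcal{D}^\va(t)\lesssim F^\va(t)$, in which $\mathcal{D}^\va$ dominates a positive multiple of $(\mathfrak{G}^\va)^2$ because $a\gg 1$. A Grönwall argument then yields uniform-in-time control by $\mathfrak{G}^\va(0)^2+\int_0^t F^\va(s)\,ds$, and taking the square root produces the claimed bound; hypothesis (2) together with the initial smallness $\mathfrak{G}_0^\va\le C_{int}\va$ guarantees that the whole estimate remains uniform in $\varepsilon$.
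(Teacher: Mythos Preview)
Your overall plan—test the difference system with the anisotropic weights $(\mathfrak{u}^\varepsilon,\mathfrak{v}^\varepsilon,\varepsilon\mathfrak{w}^\varepsilon)$ and $(\mathfrak{Q}_{ij}^\varepsilon,\varepsilon\mathfrak{Q}_{i3}^\varepsilon)$, then repeat with $\partial_\varepsilon$—matches the paper's scheme, and your use of the algebraic constraints $\eqref{extra1}_4,\eqref{extra1}_5$ to kill the leading singular piece in $\mathfrak{R}_{13},\mathfrak{R}_{23}$ is correct. However there is a genuine gap in your treatment of the resulting terms.

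In your decomposition you keep $(2\mathfrak{Q}_{11}^\varepsilon+\mathfrak{Q}_{22}^\varepsilon)\partial_zu^\varepsilon$ and then bound it in (i) by $\|\partial_z(u,v)\|_{L^\infty}$. But $\partial_zu^\varepsilon=\partial_zu+\partial_z\mathfrak{u}^\varepsilon$, and only the first summand enjoys the $L^\infty$-smallness from \eqref{convergencecondition1}. The second produces the trilinear-in-error contribution
\[
\int(2\mathfrak{Q}_{11}^\varepsilon+\mathfrak{Q}_{22}^\varepsilon)\,\partial_z\mathfrak{u}^\varepsilon\,\mathfrak{Q}_{13}^\varepsilon,
\]
which your (i)--(ii) do not cover and which cannot be absorbed by Young into $\nu_1\|\partial_z\mathfrak{u}^\varepsilon\|^2+a\|\varepsilon\mathfrak{Q}_{13}^\varepsilon\|^2$: no factor of $\varepsilon$ is available, and $\|\mathfrak{Q}_{ij}^\varepsilon\|_{L^\infty}$ is not controlled by the energy in three dimensions. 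Exactly the same problem appears in your treatment of the $\omega$-coupling: the piece $\omega_0^\varepsilon\mathfrak{Q}_{12}^\varepsilon$ tested against $\mathfrak{Q}_{11}^\varepsilon$ contains $\tilde\omega^\varepsilon\mathfrak{Q}_{12}^\varepsilon\mathfrak{Q}_{11}^\varepsilon$, with $\tilde\omega^\varepsilon=\omega_0^\varepsilon-\omega$ only in $L^2$.

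The missing idea is the antisymmetry of the co-rotational term. Since $\mathrm{tr}\big((Q^\varepsilon\Omega^\varepsilon-\Omega^\varepsilon Q^\varepsilon)Q^\varepsilon\big)=0$ pointwise, one has $\sum_{i,j}(\mathfrak{S}_k^\varepsilon)_{ij}Q_{ij}^\varepsilon=0$ with precisely the $\varepsilon$-weights you use; this is the identity $T_3=0$ established in Section~\ref{section2}. Applied to the difference, the full rotation contribution becomes $-\sum(\mathfrak{S}_k^\varepsilon,Q)-\sum(\mathfrak{S}_k,\mathfrak{Q}^\varepsilon)$: every surviving term carries one factor of a bounded limit quantity ($Q_{ij}$, $\omega$, $\partial_zu$, or $\partial_zv$), so the troublesome trilinear pieces cancel \emph{before} any estimation. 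The paper organises this through the matrices $\mathfrak{S}_1,\mathfrak{S}_2,\mathfrak{S}_3$ (see the computation of $\mathfrak{T}_4$ and $\mathfrak{T}_{10}$ in Section~\ref{section4}); only after that cancellation does it invoke the $L^\infty$-smallness of $Q$ and $\partial_z(u,v)$ in the way you describe. A second, more minor omission is the explicit bootstrap: the paper fixes $T^*=\sup\{t:\mathfrak{H}^\varepsilon(t)\lesssim\varepsilon^2\}$ and works on $[0,T^*)$; this smallness is what allows the genuinely nonlinear pieces (the stress $\varepsilon^4 M_{ij}$, quadratic in $Q^\varepsilon$, and the commutator terms at the $\partial_\varepsilon$-level) to be absorbed into the dissipation with uniform constants before closing $T^*=\infty$.
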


\subsection{ Strategies and ideas of the proofs} In this subsection, we provide a brief overview of relevant literature and outline the key strategies employed in proving our results.

The global well-posedness of \eqref{eqbe1} has been established in works such as \cite{PZ1, PZ2}. In our anisotropic system, under suitable scaling, we observe that for sufficiently large $a$, the functional $E(t):=||{\textbf u}^\va||_{L^2}^2+||\partial_\va Q^\va||_{L^2}^2+||\partial_\va Q^\va||_{L^2}^2$ satisfies the differential inequality $\frac{d}{dt}E(t)\lesssim \frac 1{\va^2}E^2(t)$. Consequently,if there exists $p\in (1,2)$, such that $E(t)\lesssim \va^p$ for any $t\ge 0$, then there exists $q\in (0,1)$, such that $\frac d{dt}E(t)\lesssim E(t)^q$, indicating that $E(t)$ exhibits polynomial rather than exponential growth.

In the hydrostatic limit case, due to the chosen scaling, the time derivative terms of $Q_{13}$ and $Q_{23}$ vanish. Moreover, the vorticity of the velocity field reduces to $(\partial_zu, \partial_zv)$, with only the horizontal component $\partial_xv-\partial_yu$ remaining. From $\eqref{extra1}_4, \eqref{extra1}_5$, it becomes clear that $Q_{11}, Q_{12}, Q_{22}$ are directly related to $\partial_zu, \partial_zv$. This allows us to express $Q_{12}$ and $Q_{22}$ in terms of $Q_{11}$ and $\frac{\partial_zu}{\partial_zv}$, i.e. $\theta(t,x,y,z)$.  Additionally,, $Q_{13}$ and $Q_{23}$ can be formulated as functions of $Q_{11}, Q_{12}, Q_{22}$, allowing us to focus primarily on the system forf $(Q_{11}, Q_{12}, Q_{22})$.

A key distinction from the anisotropic system is that, due to the interrelations among $Q_{11}, Q_{12}, Q_{22}$, the standard $H^1$ norm cannot guarantee positive diffusion terms as in the anisotropic case.  Therefore, we introduce distinct weights for $Q_{11}, Q_{12}, Q_{22}$ and perform the analysis within a weighted Sobolev space. These weights are carefully selected to ensure that the transport term integral with the velocity field 
$(u,v,w)$ remains zero and to avoid high-order derivatives in the horizontal directions.

Physically, this convergence result captures the evolution of the Q-tensor in the hydrostatic regime, where stratification leads to significant variation along the vertical axis. In 3D, this setup allows for nontrivial solutions for the Q-tensor in the limit, in contrast to the 2D case \cite{L1}, where the Q-tensor collapses to zero, implying uniform alignment of the liquid crystals. In 3D, however, the compatibility conditions permit structured, nontrivial Q-tensor solutions in the limit, revealing more complex alignment patterns within the anisotropic liquid crystal phase. This contrast illustrates how an additional spatial dimension enables more intricate molecular alignment configurations, even under strong hydrostatic constraints.

Mathematically, proving convergence in 3D Sobolev space requires a more delicate approach. Unlike in 2D \cite{L1}, where the hydrostatic Prandtl system reduces to a shear flow, the 3D hydrostatic Prandtl system can support more general flow structures. This added flexibility necessitates handling additional complexities in the Q-tensor dynamics, particularly concerning vorticity and rotational terms. Moreover, the persistence of nontrivial Q-tensor solutions in the 3D limit demands precise control to rigorously establish convergence. We anticipate similar convergence results for other liquid crystal models, such as the Oldroyd-B model \cite{DP} or Q-tensor systems coupled with the compressible Navier-Stokes equations \cite{GNT}.

To prove the convergence of the anisotropic system to the hydrostatic system, we follow a similar approach to the anisotropic case but address two main differences:\\
1.The presence of terms like $\partial_\va^2\f Q_{ij}^\va\partial_\va \f u^\va Q_{ij}$ necessitates uniform boundedness of $Q_{ij}$.\\
2.The energy functional $\tilde E(t)$ for $\f u^\va, \f v^\va, \f z^\va, \f Q_{ij}^\va$ satisfies $\frac d{dt}\tilde E(t)\lesssim \frac 1{\va^2}\tilde E(t)^2+g(t)$, where $g(t)$ depends on $u,v,w, Q_{11}, Q_{12}, Q_{22}$. To establish the convergence, we assume that $\tilde E(t), g(t)\lesssim \va^2$ uniformly.

Looking forward, extending these convergence results to a general Besov space presents additional mathematical challenges, especially in capturing behavior with lower regularity. Addressing these challenges will allow a full characterization of the convergence properties of the anisotropic Beris-Edwards system in function spaces beyond Sobolev, which we leave for future work.
\bigskip
\par\noindent{\bf\large Structure of the paper} In Section \ref{section2}, we establish the global well-posedness of the anisotropic system. Section \ref{section3} is devoted to proving the global well-posedness and asymptotic behavior of the hydrostatic system. In Section \ref{section4}, we justify the convergence of the anisotropic system toward the hydrostatic system.
\section{Global well-posedness for the anisotropic system}
\label{section2}
In this section, we prove Theorem \ref{convergenceQ0} about the global well-posedness of the anisotropic system.
\begin{proof}We define the matrix $\f S^\va_1, \f S^\va_2, \f S^\va_3$ of the rotation term $Q^\va_{ij}$ for convenience, which is
\begin{equation}
\label{matrix1}
\f S^\va_1:=\omega^\va_0
\begin{pmatrix}
-Q_{12}^\va&
\frac 12(Q^\va_{11}-Q^\va_{22})&
-\frac {\va}2 Q_{23}^\va\\
\frac 12(Q^\va_{11}-Q^\va_{22})&
Q_{12}^\va&
\frac{\varepsilon}2Q^\varepsilon_{13}\\
-\frac {\va}2Q_{23}^\va&
\frac{\varepsilon}2Q^\varepsilon_{13}&0
\end{pmatrix}
\end{equation}
\begin{equation}
\label{matrix2}
\f S^\va_2:=\omega^\va_1
\begin{pmatrix}
-Q^\va_{13}&
-\frac 12Q_{23}^\va&
\frac 1{2\va}( Q^\va_{11}-Q^\va_{33})\\
-\frac 12Q_{23}^\va&
0&
\frac{1}{2\varepsilon}Q^\varepsilon_{12}\\
\frac 1{2\va}( Q^\va_{11}-Q^\va_{33})&
\frac{1}{2\varepsilon}Q^\varepsilon_{12}&
Q^\va_{13}
\end{pmatrix}
\end{equation}
\begin{equation}
\label{matrix3}
\f S^\va_3:=\omega^\va_2
\begin{pmatrix}
0&
-\frac 12Q_{13}^\va&
\frac 1{2\va}Q^\va_{12}\\
-\frac 12Q_{13}^\va&
-Q_{23}^\va&
\frac{1}{2\varepsilon}(Q_{22}^\varepsilon-Q_{33}^\varepsilon)\\
\frac 1{2\va}Q_{12}&
\frac{1}{2\varepsilon}(Q_{22}^\varepsilon-Q_{33}^\varepsilon)&
Q_{23}^\va
\end{pmatrix}
\end{equation}
set
\begin{equation}
\begin{aligned}
\f F^\va(t):=&\frac 12||(u^\va, v^\va, \f w^\va) ||_{L^2}^2+\frac 12||(Q_{11}^\va, Q_{12}^\va, Q_{21}^\va, Q_{22}^\va, Q_{33}^\va ,Q_{13}^\va, Q_{23}^\va ,\va Q_{31}^\va, \va  Q_{32}^\va) ||_{L^2}^2\\
+&\frac 12||\partial_\va(Q_{11}^\va, Q_{12}^\va, Q_{21}^\va, Q_{22}^\va,  Q_{33}^\va ,\va Q_{13}^\va, \va  Q_{23}^\va ,\va Q_{31}^\va, \va  Q_{32}^\va) ||_{L^2}^2
\end{aligned}
\end{equation}
because of the divergence free condition of $\textbf u^\va$ and boundary condition of $\textbf u^\va, Q^\va_{ij}$, we integrate by parts to get
\begin{equation}
\label{anisotropic1}
\begin{aligned}
&\frac d{dt}\f F^\va(t)+\nu_1||\partial_\va(u^\va, v^\va, \va w^\va) ||_{L^2}^2+ a||(\f Q_{11}^\va, \f Q_{12}^\va, \f Q_{21}^\va, \f Q_{22}^\va, \f Q_{33}^\va ,\va\f Q_{13}^\va, \va \f Q_{23}^\va ,\va\f Q_{31}^\va, \va \f Q_{32}^\va) ||_{L^2}^2\\
&+(a+\nu_2)||\partial_\va(\f Q_{11}^\va, \f Q_{12}^\va, \f Q_{21}^\va, \f Q_{22}^\va, \f Q_{33}^\va ,\va\f Q_{13}^\va, \va \f Q_{23}^\va ,\va\f Q_{31}^\va, \va \f Q_{32}^\va) ||_{L^2}^2\\
&+\nu_2||\partial_\va^2(Q_{11}^\va, Q_{12}^\va, Q_{21}^\va, Q_{22}^\va,  Q_{33}^\va ,\va Q_{13}^\va, \va Q_{23}^\va ,\va Q_{31}^\va, \va Q_{32}^\va) ||_{L^2}^2:=\sum_{i=1}^{14} T_{i}
\end{aligned}
\end{equation}
where
\begin{equation}
\label{anisotropic2}
\begin{aligned}
T_1:=&\va^4(M_{11},\partial_x u^\va)_{L^2}+\va^4(M_{21},\partial_y u^\va)_{L^2}+\va^4(M_{31},\partial_z u^\va)_{L^2}+\va^4(M_{12},\partial_x v^\va)_{L^2}\\+&\va^4(M_{22},\partial_y v^\va)_{L^2}+\va^4(M_{32},\partial_z v^\va)_{L^2}+\va^4(M_{13},\partial_x w^\va)+\va^4(M_{32},\partial_y w^\va)+\va^4(M_{33},\partial_z w^\va)
\end{aligned}
\end{equation}
\end{proof}
\begin{equation}
\label{anisotropic3}
\begin{aligned}
T_{2}:=&-\sum_{1\le i,j\le 2}(\partial_\va u^\va\partial_x Q_{ij}^\va +\partial_\va v^\va\partial_y Q_{ij}^\va+\partial_\va w^\va\partial_z Q_{ij}^\va, \partial_\va Q_{ij}^\va)_{L^2}\\
&-2\sum_{j=1}^2\va^2(\partial_\va u^\va\partial_x Q_{i3}^\va + \partial_\va v^\va\partial_y Q_{i3}^\va+\partial_\va w^\va\partial_z Q_{i3}^\va, \partial_\va Q_{i3}^\va)_{L^2}
\end{aligned}
\end{equation}
\begin{equation}
\label{anisotropic4}
\begin{aligned}
T_3:&=\sum_{k=1}^3\sum_{(i,j)\ne (1,3), (2,3)}(\f S^\va_k, Q^\va_{ij})_{L^2}+2\va \sum_{k=1}^3\sum_{(i,j)= (1,3)\,\,\text{or}\,\, (2,3)}(\f S^\va_k, Q^\va_{ij})_{L^2}
\end{aligned}
\end{equation}
\begin{equation}
\label{anisotropic5}
\begin{aligned}
T_{4}:&=\sum_{k=1}^3\sum_{(i,j)\ne (1,3), (2,3)}(\partial_\va\f S^\va_k, \partial_\va Q^\va_{ij})_{L^2}+2\va\sum_{k=1}^3\sum_{(i,j)=(1,3)\,\,\text{or}\,\, (2,3)}(\partial_\va\f S^\va_k, \partial_\va Q^\va_{ij})_{L^2}
\end{aligned}
\end{equation}
\begin{equation}
\label{anisotropic6}
\begin{aligned}
T_{5}:&=b\int{(Q^\va)^3-\frac 13 Q^\va\text{tr}((Q^\va)^2)}-c\int {(Q^\va)^2\text{tr}((Q^\va)^2)}
\end{aligned}
\end{equation}
\begin{equation}
\label{anisotropic7}
\begin{aligned}
T_{6}:&=b\int{Q^\va(\partial_\va Q^\va)^2-\frac 23 Q^\va\text{tr}(\partial_\va Q^\va\cdot Q^\va)}-c\int {(\partial_\va Q^\va)^2\text{tr}((Q^\va)^2)+2(\partial_\va Q^\va\cdot Q^\va)\cdot\text{tr}(\partial_\va Q^\va\cdot Q^\va)}
\end{aligned}
\end{equation}
from Cauchy-Schwartz inequality, we have
\begin{equation*}
T_1\lesssim \va^2||\partial_\va^2(Q_{11}^\va, Q_{12}^\va, Q_{21}^\va, Q_{22}^\va,  Q_{33}^\va ,\va Q_{13}^\va, \va Q_{23}^\va ,\va Q_{31}^\va, \va Q_{32}^\va) ||_{L^2}\cdot||\partial_\va(u^\va, v^\va, \va w^\va) ||_{L^2}
\end{equation*}
\begin{equation*}
T_2\lesssim \frac 1{\va}||\partial_\va(Q_{11}^\va, Q_{12}^\va, Q_{21}^\va, Q_{22}^\va,  Q_{33}^\va ,\va Q_{13}^\va, \va Q_{23}^\va ,\va Q_{31}^\va, \va Q_{32}^\va) ||^2_{L^2}\cdot||\partial_\va(u^\va, v^\va, \va w^\va) ||_{L^2}
\end{equation*}
from the structure of $\f S^\va_1, \f S^\va_2, \f S^\va_3$, we have $T_3=0$. Moreover, from integration by parts, we obtain that
\begin{equation}
\begin{aligned}
T_4:=&(\omega^\va_0Q^\va_{12},\partial_\va^2(Q_{11}^\va-Q_{22}^\va))_{L^2}-(\omega^\va_0(Q^\va_{11}-Q^\va_{22}),\partial_\va^2Q_{12}^\va)_{L^2}+\va^2(\omega^\va_0Q^\va_{23},\partial_\va^2Q_{13}^\va)_{L^2}-\va^2(\omega^\va_0Q^\va_{13},\partial_\va^2Q_{23}^\va)_{L^2}\\
+&(\omega^\va_1Q^\va_{13},\partial_\va^2(Q_{11}^\va-Q_{33}^\va))_{L^2}-(\omega^\va_1(Q^\va_{11}-Q^\va_{33}),\partial_\va^2Q_{13}^\va)_{L^2}+\va^2(\omega^\va_1Q^\va_{12},\partial_\va^2Q_{23}^\va)_{L^2}-\va^2(\omega^\va_1Q^\va_{23},\partial_\va^2Q_{12}^\va)_{L^2}\\
+&(\omega^\va_2Q^\va_{23},\partial_\va^2(Q_{22}^\va-Q_{33}^\va))_{L^2}-(\omega^\va_2(Q^\va_{22}-Q^\va_{33}),\partial_\va^2Q_{23}^\va)_{L^2}+\va^2(\omega^\va_2Q^\va_{13},\partial_\va^2Q_{12}^\va)_{L^2}-\va^2(\omega^\va_2Q^\va_{12},\partial_\va^2Q_{13}^\va)_{L^2}\\
\lesssim&\frac 1{\va}||(Q_{11}^\va, Q_{12}^\va, Q_{21}^\va, Q_{22}^\va, Q_{33}^\va ,Q_{13}^\va, Q_{23}^\va ,\va Q_{31}^\va, \va  Q_{32}^\va) ||_{L^\infty}\cdot\\
&||\partial_\va(\f u^\va, \f v^\va, \va\f w^\va)||_{L^2}\cdot||\partial_\va^2(Q_{11}^\va, Q_{12}^\va, Q_{21}^\va, Q_{22}^\va,  Q_{33}^\va ,\va Q_{13}^\va, \va Q_{23}^\va ,\va Q_{31}^\va, \va Q_{32}^\va) ||_{L^2}\\
\lesssim&\frac{\nu_2}{100}||\partial_\va^2(Q_{11}^\va, Q_{12}^\va, Q_{21}^\va, Q_{22}^\va,  Q_{33}^\va ,\va Q_{13}^\va, \va Q_{23}^\va ,\va Q_{31}^\va, \va Q_{32}^\va) ||_{L^2}^2\\
+&\frac 1{\va^2}||\partial_\va(\f u^\va, \f v^\va, \va\f w^\va)||_{L^2}^2||(Q_{11}^\va, Q_{12}^\va, Q_{21}^\va, Q_{22}^\va, Q_{33}^\va ,Q_{13}^\va, Q_{23}^\va ,\va Q_{31}^\va, \va  Q_{32}^\va) ||_{L^2}^2\\
+&\frac 1{\va^2}||\partial_\va(\f u^\va, \f v^\va, \va\f w^\va)||_{L^2}^2||\partial_z(Q_{11}^\va, Q_{12}^\va, Q_{21}^\va, Q_{22}^\va, Q_{33}^\va ,Q_{13}^\va, Q_{23}^\va ,\va Q_{31}^\va, \va  Q_{32}^\va) ||_{L^2}^2
\end{aligned}
\end{equation}
for the terms $T_5$ and $T_6$, we have
\begin{equation*}
T_5+T_6+\frac c2\int {(Q^\va)^2\text{tr}((Q^\va)^2)}+\frac c2\int {(\partial_\va Q^\va)^2\text{tr}((Q^\va)^2)}\lesssim ||Q^\va||^2_{L^2}+||\partial_\va Q^\va||_{L^2}^2
\end{equation*}
remind that with the boundary conditions of ${\textbf u}^\va, Q^\va$, we have the Poincar\'e inequality
\[
||(u^\va, v^\va, \va w^\va, Q_{11}^\va,  Q_{12}^\va,  Q_{22}^\va, \va Q_{13}^\va,   \va Q_{23}^\va)||^2_{L^2}\lesssim ||\partial_z(u^\va, v^\va, \va w^\va, Q_{11}^\va,  Q_{12}^\va,  Q_{22}^\va, \va Q_{13}^\va,   \va Q_{23}^\va)||^2_{L^2}
\]
combine the estimates above together, we have for any $\nu_1,\nu_2, a, c>0, b\in \mathbb R$, there exists a constant $\lambda_0> 0$ that does not depend on $\va$, such that $\frac d{dt}\f F^\va(t)\le \lambda_0\f F^\va(t)+\frac 1{\va^2} (\f F^\va(t))^2$. Under the assumption $\f F^\va(t)\lesssim \va$, there exists a constant $\lambda_1>0$, such that $\f F^\va(t)\lesssim e^{\lambda _1t}\f F^\va(0) $.\\
If $a$ is large enough, then there exist a constant $\lambda'$, such that \[\frac{d}{dt}\f F^\va(t)\lesssim -\lambda'\f F^\va(t)+\frac 1{\va^2}\f (\f F^\va(t))^2\le -\frac 1{\va^2}\f (\f F^\va(t))^2.\] Under the assumption that $\f F^\va (t)\lesssim \va^p (1<p<2)$, we obtain that $\frac d{dt}\f F^\va(t)\lesssim (\f F^\va(t))^\frac{2p-2}{p}$. By solving this ODE, we obtain that there exists a constant $C>0$, such that
\[
\f F^\va(t)\le \left(\f F^\va(0)^\frac{2-p}{p}+\frac{C(2-p)}{p}\right)^\frac{p}{2-p}.\]
If $\f F^\va(t)\to 0$ as $t\to\infty$, then exists $T^\va$, such that $\f F^\va(t)< \lambda'\va^2$ for any $t\ge T^\va$. From Gronwall inequality, we have for any $t\ge T^\va $,
\[
\f F^\va (t)\le\left (e^{\lambda(t-T^\va)}\frac{\lambda'\va^2-\f F^\va(T^\va)}{\lambda'\va^2}+\frac{1}{\lambda'\va^2}\right)^{-1}<\frac{\lambda'\va^2e^{\lambda' T^\va}}{\lambda'\va^2-\f F^\va(T^\va)}e^{-\lambda't}.
\]
This finishes the proof of Theorem \ref{convergenceQ0}.
\section{Global well-posedness for the hydrostatic Q-tensor system}
\label{section3}
In this section we study the global well-posedness of the hydrostatic Q-tensor system \eqref{extra1}, which depends the behaviour of the Prandtl system \eqref{Prandtl3}.  
\subsection{More information about Prandtl equation}
For $(x,y)\in [0,2\pi]$, the general 3D Prandtl equation reads
\begin{equation}
\label{ill1}
\begin{cases}
\partial_tu+u\partial_xu+v\partial_yu+w\partial_zu=\partial_z^2u\\
\partial_tv+u\partial_xv+v\partial_yv+w\partial_zv=\partial_z^2v\\
\partial_xu+\partial_yv+\partial_zw=0\\
(u,v,w)|_{z=0}=0,  \lim_{z\to\infty}=(U_0, V_0), U_0,V_0>0\\
\end{cases}
\end{equation}
and our case restricts the vertical variable $z$ in an interval (here we suppose that $\nu_2=1$). A special solution of \eqref{ill1} is the shear flow $(u^s(t,z), v^s(t,z))$, where $u^s, v^s$ satisfy the following heat equation:
\begin{equation}
\label{ill2}
\begin{cases}
\partial_tu^s=\partial_z^2u^s, \quad \partial_tv^s=\partial_z^2v^s\\
(u^s, v^s)|_{z=0}=0, \lim_{z\to \infty}(u^s, v^s)=(U^0, V^0), (u^s, v^s)|_{t=0}=(U^s, V^s)(z)
\end{cases}
\end{equation}
with $(u^s-U_0, v^s-V_0)$ rapidly tending to 0 when $z\to\infty$, and the linearization of \eqref{ill1} around $(u^s, v^s, 0)$ becomes
\begin{equation}
\label{ill3}
\begin{cases}
\partial_tu+(u^s\partial_x+v^s\partial_y)u+wu^s_z=\partial_z^2u\\
\partial_tv+(u^s\partial_x+v^s\partial_y)v+wv^s_z=\partial_z^v\\
\partial_xu+\partial_yv+\partial_zw=0\\
(u,v,w)|_{z=0}=0, \lim_{z\to\infty}(u,v)=0
\end{cases}
\end{equation}
in our case, a classical example of shear flow will be Kolmogorov flow $(\tilde u(t,z), \tilde v(t,z), 0)$, when
\[
\tilde u(t,z)=\sum_{n=1}^\infty a_ne^{-n^2t}\sin (nz), \quad \tilde v(t,z)=\sum_{n=1}^\infty b_ne^{-n^2t}\sin (nz), a_n, b_n\in\mathbb R.
\]
with initial data
\[
\tilde u_0(z)=\sum_{n=1}^\infty a_n\sin (nz), \quad \tilde v_0(z)=\sum_{n=1}^\infty b_n\sin (nz).
\]
however, because the Kolmogorov flow decays exponentially and there does not exist viscosities on horizontal variables, we could not expect that the Kolmogorov flow above are stable (either linear or nonlinear) like Euler equation or Navier-Stokes equation \cite{BL,WZZ2}.  In fact, because $\tilde u_0, \tilde v_0$ have nondegenerate critical points,  there exists \textit{ill-posedness} (see \cite{GD} for example).

Moreover, \cite{LXY1} showed even if we revise the interval to let $\tilde u_0,\tilde v_0$ be monotone, if there exists $z_0>0$, such that $\partial_z(\frac{\partial_z\tilde u_0}{\partial_z\tilde v_0}) (z_0)=0$, there still exists ill-posedness. Because the behaviour of $\frac{\partial_zu}{\partial_zv}$ has direct relations with the behaviour of hydrostatic system \eqref{extra1}, we give more details here. Denote operator $T\in \mathcal (E_{\alpha,\beta}, E_{\alpha, \beta'})$ as $T(t,s)((u_0,v_0)):=(u,v)(t,\cdot)$, where $(u,v)$ is the solution of \eqref{ill3} with $(u,v)|_{t=s}=(u_0,v_0)$. Moreover, for $\alpha,m\ge 0$, we define functional spaces
\[
\mathcal H^m_\alpha:= H^m([0,2\pi]\times[0,2\pi]|_{x,y}; L^2_\alpha(\mathbb R_z)).
\]
Because $E_{\alpha,\beta}$ is dense in $\mathcal H^m_\alpha$, we can extend the operator $T$ from the space $E_{\alpha,\beta}$ to $\mathcal H^m_\alpha$, and define
\[
||T(t,s)||_{\mathcal L(H^{m_1}_\alpha, \mathcal H^{m_2}_\alpha)}:= \sup_{u_0,v_0\in E_{\alpha,\beta}}\frac{||T(t,s)(u_0,v_0)||_{\mathcal H^{m_2}_\alpha}}{||(u_0,v_0)||_{\mathcal H^{m_1}_\alpha}}\in \mathbb R^+\cup \{\infty\}
\]
where the infinity means that $T$ can not be extended to ${\mathcal L(\mathcal H^{m_1}_\alpha, \mathcal H^{m_2}_\alpha)}$. The main result can be stated as follows:
\begin{thm}
(\cite{LXY1}, Theorem 1) Let $(u^s, v^s)(t,z)$ be the solution of \eqref{ill2} satisfying
\[
(u^s-U_0, v^s-V_0)\in C^0(\mathbb R^+; W^{4,\infty}_\alpha(\mathbb R^+)\cap H^4_\alpha(\mathbb R^+))\cap C^1(\mathbb R^+;W^{2,\infty}_\alpha(\mathbb R^+)\cap H^2_\alpha(\mathbb R^+)).
\]
if the initial data $U_s, V_s$ satisfies $\frac{d}{dz}\left(\frac{U'_s}{V'_s}\right)\not\equiv 0$, then
(1)There exists $\sigma>0$, such that for all $\sigma>0$,
\[
\sup_{0\le s\le t\le\delta}||e^{-\sigma(t-s)\sqrt{|\partial_{\mathcal T}|}}T(t,s)||_{\mathcal H^m_\alpha, \mathcal H^{m-\nu}_\alpha}=\infty,\quad \forall m>0,\mu\in [0,\frac 14),
\]
where $\partial_{\mathcal T}$ represents the tangential derivative $\partial_x$ or $\partial_y$;
(2) There exists an initial shear layer $(U_s, V_s)$ to \eqref{ill2} and $\sigma>0$, such that for any $\delta>0$,
\[
\sup_{0\le s\le t\le\delta}||e^{-\sigma(t-s)\sqrt{|\partial_{\mathcal T}|}}T(t,s)||_{\mathcal H^{m_1}_\alpha, \mathcal H^{m_2}_\alpha}=\infty,\quad \forall m_1, m_2>0.
\]
\end{thm}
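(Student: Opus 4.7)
The plan is to adapt the Gérard-Varet--Dormy ill-posedness strategy (originally for the 2D Prandtl equation around a shear with a nondegenerate critical point) to the 3D setting, exploiting that the non-constancy of $U'_s/V'_s$ yields, in a suitable tangential direction, an effective scalar shear with a nondegenerate critical point. The starting observation is that the linearization \eqref{ill3} decouples in Fourier variables in $(x,y)$: writing $(u,v,w) = \sum_{k\in\mathbb Z^2} (\hat u_k, \hat v_k, \hat w_k)(t,z) e^{i(k_1 x + k_2 y)}$, one obtains for each wave vector $k=(k_1,k_2)$ a one-dimensional evolution problem that structurally mimics a 2D Prandtl linearization around the effective shear $\varphi_k^s(t,z) := k_1 u^s(t,z) + k_2 v^s(t,z)$, coupled through the divergence condition $i k_1 \hat u_k + i k_2 \hat v_k + \partial_z \hat w_k = 0$.

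Next I would identify a tangential direction producing instability. The condition $\frac{d}{dz}(U'_s/V'_s)\not\equiv 0$ means one can find $z_0>0$ and $(k_1,k_2)$ (only the direction matters) so that $\partial_z \varphi_k^s(0,z_0)=0$ while $\partial_z^2 \varphi_k^s(0,z_0)\neq 0$. By continuity of the heat flow \eqref{ill2}, this nondegenerate critical point persists in a small time interval $[0,\delta]$. In this regime, the operator obtained in Step 1 behaves like the 2D Prandtl linearization studied in \cite{GD}.

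Then I would run a WKB/critical-layer construction at large frequency $|k|\gg 1$. Look for approximate eigenmodes of the form
\[
(\hat u_k,\hat v_k,\hat w_k)(t,z)\;\sim\; e^{\sigma |k|^{1/2} t}\,\Psi\!\left(z,\;|k|^{1/2}(z-z_0)\right),
\]
where the inner profile $\Psi$ solves a Rayleigh-type ODE in the boundary layer of width $|k|^{-1/2}$ around $z_0$. A solvability/matching analysis at leading order produces a complex dispersion relation with at least one root $\sigma$ having $\mathrm{Re}\,\sigma>0$, hence a family of approximate solutions growing like $e^{c\sqrt{|k|}\,t}$ for some $c>0$ that can be computed from $\partial_z^2\varphi_k^s(0,z_0)$. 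One then converts these approximate solutions into a lower bound on $T(t,s)$ by estimating the residual in $\mathcal H^m_\alpha$ via a duality/energy argument and testing $T(t,s)$ on such initial data to get $\|T(t,s)(u_0,v_0)\|_{\mathcal H^{m-\mu}_\alpha}\gtrsim e^{c\sqrt{|k|}(t-s)}\|(u_0,v_0)\|_{\mathcal H^m_\alpha}$ with a loss of only $\mu<1/4$ derivatives.

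Finally, both conclusions follow by testing against the Gevrey-$\tfrac12$ weight. For part (1), since $c\sqrt{|k|}$ beats $\sigma\sqrt{|k|}$ for any prescribed $\sigma<c$, sending $|k|\to\infty$ forces $\sup_{0\le s\le t\le\delta}\|e^{-\sigma(t-s)\sqrt{|\partial_{\mathcal T}|}}T(t,s)\|=\infty$. For part (2), one engineers a specific initial shear $(U_s,V_s)$ with a sequence of nondegenerate critical points producing growth rates $c_n\to\infty$ (for instance by rescaling or by choosing $U_s/V_s$ with increasingly sharp oscillations), so that no finite $\sigma$ can dominate, yielding unboundedness between \emph{any} pair of Sobolev indices $m_1,m_2$. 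The main obstacle I anticipate is Step 3: rigorously constructing the critical-layer profile $\Psi$ and controlling the corrector so that the residual is negligible in the anisotropic space $\mathcal H^m_\alpha$; this requires delicate matched asymptotics between the Rayleigh inner problem near $z_0$ and the exterior heat-type dynamics, exactly as in the classical Prandtl ill-posedness theory, with the additional twist that the effective shear $\varphi_k^s$ depends on the chosen direction $k/|k|$ and must be shown to retain its nondegeneracy uniformly in $t\in[0,\delta]$.
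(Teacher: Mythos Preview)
This theorem is not proved in the paper at all: it is quoted verbatim from \cite{LXY1} (note the attribution ``(\cite{LXY1}, Theorem 1)'' in the statement) as background on the 3D Prandtl system, and the paper immediately moves on to Proposition~\ref{vorticity1} without offering any argument for it. So there is no ``paper's own proof'' to compare your proposal against.

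Your sketch is a reasonable outline of the G\'erard-Varet--Dormy strategy as adapted to 3D in \cite{LXY1}: Fourier decomposition in the tangential variables, reduction to an effective 2D shear $\varphi^s_k = k_1 u^s + k_2 v^s$, identification of a nondegenerate critical point from the hypothesis $\frac{d}{dz}(U'_s/V'_s)\not\equiv 0$, and a WKB construction yielding growth $e^{c\sqrt{|k|}t}$. If you want to check the details, you should consult \cite{LXY1} directly rather than the present paper; in particular, the mechanism in part~(2) is not a sequence of critical points with diverging growth rates as you suggest, but rather a carefully chosen shear for which the approximate-solution argument can be pushed to defeat any fixed Sobolev gap $m_1-m_2$.
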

so a possible choice for the stability of \eqref{extra1} around Kolmogorov flow will be alway supposing $u=p_1(t,x,y)v+p_2(t,x,y)$. From $\eqref{extra1}_1, \eqref{extra1}_2$, we have $\partial_tp_1\cdot v+\partial_tp_2=0$, which means that $\partial_tp_1=\partial_tp_2=0$, i.e. $p_1, p_2$ are time-independant.

Another important problem about Prandtl equation is the stationary solutions. Let us start by simplified 2D model
\begin{equation}
\label{Prandtl2D1}
\begin{cases}
u\partial_xu+v\partial_yu=\partial_y^2u,\quad x,y\in \mathbb R^+,\\
\partial_xu+\partial_yv=0,\\
u|_{x=0}=u_0(y), \quad (u,v)|_{y=0}=0, \quad\lim_{y\to\infty}u(x,y)=1
\end{cases}
\end{equation}
the stationary state of \eqref{Prandtl2D1} is the \textit{Blasius self similar solution} $f$ that satisfies the following ODE:
\begin{equation}
\label{Prandtl2D2}
f'''+\frac 12ff''=0, x\in \mathbb R^+;\quad f(0)=f'(0)=0, \quad f'(+\infty)=1
\end{equation}
then the vector field 
\begin{equation}
\label{Prandtl2D3}
(\bar u,\bar v):=(f'(\eta), \frac 1{2\sqrt {x+1}}(\eta f'(\eta)-f(\eta)))),\quad \eta=\frac{y}{\sqrt {x+1}} 
\end{equation}
is the solution of \eqref{Prandtl2D1}. A natural question is the convergence solutions of Prandtl's equation \eqref{Prandtl2D1} converges to Blasius profile and the rate of convergence. The authors proved in \cite{WZZ3} that $
||u-\bar u||_{L^\infty_y}\lesssim (x+1)^{-(\frac 12)-}$,
and recently it was showed in \cite{JLY} that this result can be improved by $||u-\bar u||_{L^\infty_y}\lesssim (x+1)^{-1}$. However, due to the complexity of 3D Prandtl system, the information self-similar stationary solutions remains open.

Now we comeback to \eqref{Prandtl3} to prove the $L^\infty$ boundness of $\omega$.
\begin{prop}
\label{vorticity1}
Under the assumption of Theorem \eqref{convergenceQ1}, we have $\partial_xv(t,\cdot)-\partial_yu(t,\cdot)\in L^{\infty}(\mathcal S)$ for any $t\ge 0$.
\end{prop}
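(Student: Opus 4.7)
The plan is to derive a transport–diffusion equation for $\omega = \partial_x v - \partial_y u$ directly from \eqref{Prandtl3} and then propagate the $L^\infty$ bound $\|\partial_x v_{in}-\partial_y u_{in}\|_{L^\infty}<\infty$ via a maximum-principle argument combined with Gronwall. Applying $\partial_x$ to the $v$-equation and $\partial_y$ to the $u$-equation and subtracting, the pressure contribution $\partial_x\partial_y p-\partial_y\partial_x p$ vanishes because $\partial_z p=0$ makes $p$ smooth in $(x,y)$ alone. Expanding the convective nonlinearity and using the incompressibility relation $\partial_x u+\partial_y v=-\partial_z w$ to absorb $u_x+v_y$ into a reaction coefficient $-w_z$, one arrives at
\begin{equation*}
\partial_t\omega+u\,\partial_x\omega+v\,\partial_y\omega+w\,\partial_z\omega-\nu_1\partial_z^2\omega \;=\; (\partial_z w)\,\omega+(\partial_y w)(\partial_z u)-(\partial_x w)(\partial_z v).
\end{equation*}

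Since the only dissipation is in the vertical direction, I would apply a pointwise maximum principle (or, rigorously, test against $|\omega|^{p-2}\omega$ in $L^p$ and send $p\to\infty$): the convective terms make no contribution because $\partial_x\omega=\partial_y\omega=\partial_z\omega=0$ at an interior extremum in $z$, while $-\nu_1\partial_z^2\omega\ge0$ there. The boundary extremum case is ruled out by the periodic or zero boundary conditions. One therefore obtains
\begin{equation*}
\frac{d}{dt}\|\omega(t,\cdot)\|_{L^\infty}\;\le\;\|\partial_z w\|_{L^\infty}\|\omega(t,\cdot)\|_{L^\infty}+\big\|(\partial_y w)(\partial_z u)-(\partial_x w)(\partial_z v)\big\|_{L^\infty}.
\end{equation*}
The coefficient $\partial_z w=-(\partial_x u+\partial_y v)$ is bounded, and the inhomogeneous source can be rewritten using $\partial_z u=\theta\,\partial_z v$ as $(\partial_z v)(\theta\,\partial_y w-\partial_x w)$. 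The uniform bounds on $\theta$ and $\partial_z v$ from the hypotheses of Theorem \ref{convergenceQ1}, combined with the $L^\infty$ control on $\partial_x w,\partial_y w$ inherited from the regularity of the underlying Prandtl flow, make this uniformly bounded in time. A direct application of Gronwall's lemma then yields $\|\omega(t,\cdot)\|_{L^\infty}<\infty$ for every $t\ge0$.

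The main obstacle is the control of the vortex-tilting source $(\partial_x w)(\partial_z v)$ and $(\partial_y w)(\partial_z u)$: by incompressibility, $\partial_x w$ and $\partial_y w$ are vertical integrals of second-order horizontal derivatives of $(u,v)$, a notoriously delicate quantity for the Prandtl system. The assumptions of Theorem \ref{convergenceQ1} on $\theta=\partial_zu/\partial_zv$ and on $\partial_z\theta$, $\partial_t\theta$ are exactly what is needed to tame this source: they encode that $(\partial_z u,\partial_z v)$ is collinear in a controlled way, which allows one to trade horizontal derivatives of $w$ against the better-controlled quantity $\theta$. If these bounds on $\partial_x w,\partial_y w$ are not directly available from the listed hypotheses, one can alternatively run the estimate in a suitable analytic/Gevrey framework such as the spaces $E_{\alpha,\beta}$ introduced above, in which such bounds follow from the well-posedness theory of the 3D Prandtl system together with the assumed structure of $\theta$.
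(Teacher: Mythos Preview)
Your strategy is the same as the paper's: derive a transport--diffusion equation for $\omega$ and propagate the $L^\infty$ bound via the $L^p$ method with $p\to\infty$. The paper multiplies its $\omega$-equation by $\omega^{2k+1}$, integrates, and asserts the clean identity
\[
\frac{1}{2k+2}\frac{d}{dt}\int\omega^{2k+2}=-(2k+1)\nu_1\int|\omega^k\partial_z\omega|^2\le 0,
\]
so that $\|\omega(t)\|_{L^{2k+2}}\le\|\omega_0\|_{L^{2k+2}}$; letting $k\to\infty$ finishes. No Gronwall step appears and no bound on $\nabla_h w$ is invoked.

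The entire difference between your argument and the paper's lies in the vortex-tilting terms you single out. Your equation correctly retains $(\partial_z w)\omega+(\partial_y w)\partial_z u-(\partial_x w)\partial_z v$ on the right-hand side, which then forces the Gronwall step and the need for $L^\infty$ control of $\partial_z w,\partial_x w,\partial_y w$. The paper's displayed $\omega$-equation omits $w\partial_z\omega$ and $\omega\partial_z w$, and after testing against $\omega^{2k+1}$ it silently discards the contribution of $\partial_xw\,\partial_zv-\partial_yw\,\partial_zu$ to reach the inequality above. In the shear-flow example $(ae^{-t}\sin z,be^{-t}\sin z,0)$ quoted just after Theorem~\ref{convergenceQ1} one has $w\equiv0$ and these terms genuinely vanish, which is presumably the regime the paper has in mind. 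For a general Prandtl solution your more careful bookkeeping is the correct one, and the obstacle you flag---$L^\infty$ control of $\partial_x w,\partial_y w$, not contained in the hypotheses of Theorem~\ref{convergenceQ1}---is precisely what the paper's short argument sidesteps rather than resolves.
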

\begin{proof}
From direct calculations, $\partial_t\omega+u\partial_x\omega+v\partial_y\omega+\partial_xw\partial_zv-\partial_yw\partial_zu= \nu_1\partial_{z}^2\omega$. Then for any $k\in\mathbb N$,
\begin{equation*}
\begin{aligned}
\frac{1}{2k+2}\frac d{dt}\int\omega^{2k+2} =-(2k+1) \nu_1\int |\omega^k\partial_z\omega|^2\le 0
\end{aligned}
\end{equation*}
thus $ ||\omega(t,\cdot)||_{L^{2k+2}(\Omega_0\times [0,2\pi])}\le ||\omega_0||_{L^{2k+2}(\Omega_0\times [0,2\pi])}$ for any $t\ge 0$. And the proposition is proved by letting $k\to\infty$.
\end{proof}
\subsection{Convergence of hydrostatic Q-tensor system}
In this subsection  we prove Theorem \ref{convergenceQ1}.

\begin{proof}
We begin by rewriting schematically the equations $\eqref{extra1}_1, \eqref{extra1}_2, \eqref{extra1}_3$ as the following forms:
\begin{equation}
\label{limq11+}
\partial_t Q_{11}=F_{11}-Q_{13}\partial_zu
\end{equation}
\begin{equation}
\label{limq12+}
\partial_tQ_{12}=F_{12}-Q_{23}\partial_zu-Q_{13}\partial_zv
\end{equation}
\begin{equation}
\label{limq22+}
\begin{aligned}
\partial_tQ_{22}=F_{22}-Q_{23}\partial_zv\\
\end{aligned}
\end{equation} where $F_{11},F_{12},F_{22}$ are independent of terms containing $Q_{13},Q_{23}$ and are also independent of 
$\partial_t Q_{11},\partial_t Q_{12}, \partial_t Q_{22}$.
Take the time derivative of $\eqref{extra1}_4$, we obtain
\begin{equation}
\label{limq13tderiv}
\partial_t (2Q_{11}+Q_{22})\partial_zu+\partial_t Q_{12}\partial_zv+\partial^2_{tz}u(2Q_{11}+Q_{22})+\partial^2_{tz}vQ_{12} =0
\end{equation}
we calculate \eqref{limq11+} $\times 2\partial_zu+$\eqref{limq12+} $\times \partial_zv$+$\eqref{limq22+}\times\partial_zu$ and we obtain:
\begin{align}
\label{scheme1}
\partial_t (2Q_{11}+Q_{22})\partial_zu+\partial_t Q_{12}\partial_zv=2F_{11}\partial_zu-2(\partial_zu)^2Q_{13}+F_{12}\partial_zv-2\partial_zu\partial_zvQ_{23}-(\partial_zv)^2Q_{13}+F_{22}\partial_zu
\end{align}
\eqref{scheme1}-\eqref{limq13tderiv}, we have
\begin{equation}
\label{scheme01}
[2(\partial_zu)^2+(\partial_zv)^2]Q_{13}+\partial_zu\partial_zvQ_{23}=2F_{11}\partial_zu+F_{12}\partial_zv+F_{22}\partial_zu-\partial_t\partial_zu(2Q_{11}+Q_{22})-\partial_t\partial_zvQ_{12}:=M_1
\end{equation}
similarly, we do the time derivative of $\eqref{extra1}_5$ to get that
\begin{equation}\label{limq23deriv1}
    \partial_t Q_{12}\partial_zu+Q_{12}\partial_t\partial_zu +(\partial_t Q_{11}+2\partial_t Q_{22})\partial_zv+(Q_{11}+2Q_{22})\partial_t\partial_zv=0
\end{equation}
we calculate \eqref{limq11+} $\times \partial_zv+$\eqref{limq12+} $\times \partial_zu$+$\eqref{limq22+}\times2\partial_zv$ and we obtain:
\begin{align}
\label{scheme2}
(\partial_t Q_{11}+2\partial_t Q_{22})\partial_zv\partial_t Q_{12}\partial_zu=F_{11}\partial_zv-2Q_{13}\partial_zu\partial_zv+F_{12}\partial_zu-Q_{23}(\partial_zu)^2+2F_{22}\partial_zv-2Q_{23}(\partial_zv)^2
\end{align}
\eqref{scheme2}-\eqref{limq23deriv1}, we have
\begin{equation}
\label{scheme02}
2\partial_zu\partial_zvQ_{13}+[(\partial_zu)^2+2(\partial_zv)^2]Q_{23}=F_{11}\partial_zv+F_{12}\partial_zu+2F_{22}\partial_zv-Q_{12}\partial_t\partial_zu-(Q_{11}+2Q_{22})\partial_t\partial_zv:=M_2
\end{equation}
notice that \eqref{scheme01}, \eqref{scheme02} is the equation set of $Q_{13}, Q_{23}$. Set
$A_{11}:=2(\partial_z u)^2+(\partial_z v)^2, \, A_{12}:=2\partial_z u\partial_z v,\, A_{22}:=(\partial_z u)^2+2(\partial_z v)^2, \mathfrak A:=A_{11}A_{22}-A^2_{12}=2(\partial_z u)^4+2(\partial_z v)^4+(\partial_z u)^2(\partial_z v)^2> 0$. Then $Q_{13} A_{11}+Q_{23}A_{12}=M_1, Q_{13} A_{21}+Q_{23}A_{22}=M_2$.  Thus
\begin{equation}
\label{scheme1}
Q_{13}=\frac 1{\mathfrak A}(A_{22}M_1-A_{12}M_2), \quad Q_{23}=\frac 1{\mathfrak A}(-A_{12}M_1+A_{11}M_2)
\end{equation}
recall \eqref{limq11+}-\eqref{limq22+}, we have
\begin{equation}
\label{scheme2}
\begin{aligned}\partial_tQ_{11}=&F_{11}-\frac{\partial_zu}{\mathfrak A}[((\partial_zu)^2+2(\partial_zv)^2))\cdot ((2F_{11}+F_{22})\partial_zu+F_{12}\partial_zv-(2Q_{11}+Q_{22})\partial_t\partial_zu-Q_{12}\partial_t\partial_zv)]\\
+&\frac{\partial_zu}{\mathfrak A}[2\partial_zu\partial_zv\cdot (F_{12}\partial_zu+(F_{11}+2F_{22})\partial_zv-Q_{12}\partial_t\partial_zu-(Q_{11}+2Q_{22})\partial_t\partial_zv)]\\
\end{aligned}
\end{equation}
\begin{equation}
\label{scheme3}
\begin{aligned}
\partial_tQ_{22}=&F_{22}-\frac{\partial_zv}{\mathfrak A}[(2(\partial_zu)^2+(\partial_zv)^2))\cdot (F_{12}\partial_zu+(F_{11}+2F_{22})\partial_zv-Q_{12}\partial_t\partial_zu-(Q_{11}+2Q_{22})\partial_t\partial_zv)]\\
+&\frac{\partial_zv}{\mathfrak A}[2\partial_zu\partial_zv\cdot ((2F_{11}+F_{22})\partial_zu+F_{12}\partial_zv-(2Q_{11}+Q_{22})\partial_t\partial_zu-Q_{12}\partial_t\partial_zv)]\\
\end{aligned}
\end{equation}
\begin{equation}
\label{scheme3}
\begin{aligned}
&\partial_tQ_{12}=F_{12}-\frac{\partial_zu}{\mathfrak A}[(2(\partial_zu)^2+(\partial_zv)^2))\cdot (F_{12}\partial_zu+(F_{11}+2F_{22})\partial_zv-Q_{12}\partial_t\partial_zu-(Q_{11}+2Q_{22})\partial_t\partial_zv)]\\
&+\frac{\partial_zu}{\mathfrak A}[2\partial_zu\partial_zv\cdot ((2F_{11}+F_{22})\partial_zu+F_{12}\partial_zv-(2Q_{11}+Q_{22})\partial_t\partial_zu-Q_{12}\partial_t\partial_zv)]\\
&-\frac{\partial_zv}{\mathfrak A}[((\partial_zu)^2+2(\partial_zv)^2))\cdot ((2F_{11}+F_{22})\partial_zu+F_{12}\partial_zv-(2Q_{11}+Q_{22})\partial_t\partial_zu-Q_{12}\partial_t\partial_zv)]\\
&+\frac{\partial_zv}{\mathfrak A}[2\partial_zu\partial_zv\cdot (F_{12}\partial_zu+(F_{11}+2F_{22})\partial_zv-Q_{12}\partial_t\partial_zu-(Q_{11}+2Q_{22})\partial_t\partial_zv)]\\
\end{aligned}
\end{equation}
notice that from \eqref{limq13tderiv}, \eqref{limq23deriv1}, we have
\[
\partial_{t}\partial_zu(2Q_{11}+Q_{22})+\partial_{t}\partial_zvQ_{12} =-\partial_t (2Q_{11}+Q_{22})\partial_zu-\partial_t Q_{12}\partial_zv,
\]
\[
 \partial_{t}\partial_zuQ_{12}+\partial_{t}\partial_zv(Q_{11}+2Q_{22}) =-\partial_t Q_{12}\partial_zu-\partial_t( Q_{11}+2Q_{22})\partial_zv
\]
it means that
\begin{equation}
\label{scheme5}
\begin{aligned}
&\partial_tQ_{11}=F_{11}-\frac{\partial_zu}{\mathfrak A}[((\partial_zu)^2+2(\partial_zv)^2))\cdot ((2F_{11}+F_{22})\partial_zu+F_{12}\partial_zv+\partial_t (2Q_{11}+Q_{22})\partial_zu+\partial_t Q_{12}\partial_zv)]\\
&+\frac{\partial_zu}{\mathfrak A}[2\partial_zu\partial_zv\cdot (F_{12}\partial_zu+(F_{11}+2F_{22})\partial_zv+\partial_t Q_{12}\partial_zu+\partial_t( Q_{11}+2Q_{22})\partial_zv)]\\
\end{aligned}
\end{equation}
\begin{equation}
\label{scheme6}
\begin{aligned}
&\partial_tQ_{22}=F_{22}-\frac{\partial_zv}{\mathfrak A}[(2(\partial_zu)^2+(\partial_zv)^2))\cdot (F_{12}\partial_zu+(F_{11}+2F_{22})\partial_zv+\partial_t Q_{12}\partial_zu+\partial_t( Q_{11}+2Q_{22})\partial_zv)]\\
&+\frac{\partial_zv}{\mathfrak A}[2\partial_zu\partial_zv\cdot ((2F_{11}+F_{22})\partial_zu+F_{12}\partial_zv+\partial_t (2Q_{11}+Q_{22})\partial_zu+\partial_t Q_{12}\partial_zv)]\\
\end{aligned}
\end{equation}
\begin{equation}
\label{scheme7}
\begin{aligned}
&\partial_tQ_{12}=F_{12}-\frac{\partial_zu}{\mathfrak A}[(2(\partial_zu)^2+(\partial_zv)^2))\cdot (F_{12}\partial_zu+(F_{11}+2F_{22})\partial_zv+\partial_t Q_{12}\partial_zu+\partial_t( Q_{11}+2Q_{22})\partial_zv))]\\
&+\frac{\partial_zu}{\mathfrak A}[2\partial_zu\partial_zv\cdot ((2F_{11}+F_{22})\partial_zu+F_{12}\partial_zv+\partial_t (2Q_{11}+Q_{22})\partial_zu+\partial_t Q_{12}\partial_zv))]\\
&-\frac{\partial_zv}{\mathfrak A}[((\partial_zu)^2+2(\partial_zv)^2))\cdot ((2F_{11}+F_{22})\partial_zu+F_{12}\partial_zv+\partial_t (2Q_{11}+Q_{22})\partial_zu+\partial_t Q_{12}\partial_zv))]\\
&+\frac{\partial_zv}{\mathfrak A}[2\partial_zu\partial_zv\cdot (F_{12}\partial_zu+(F_{11}+2F_{22})\partial_zv+\partial_t Q_{12}\partial_zu+\partial_t( Q_{11}+2Q_{22})\partial_zv))]\\
\end{aligned}
\end{equation}
now we put all the $\partial_t$ terms on the left, and we obtain that
\begin{equation}
\label{scheme8}
\begin{aligned}
&\partial_tQ_{11}+\partial_tQ_{11}\cdot\frac{2(\partial_zu)^4+2(\partial_zu)^2(\partial_zv)^2}{\mathfrak A}+\partial_tQ_{22}\cdot\frac{(\partial_zu)^4-2(\partial_zu)^2(\partial_zv)^2}{\mathfrak A}+\partial_tQ_{12}\cdot\frac{-(\partial_zu)^3(\partial_zv)+2\partial_zu(\partial_zv)^3}{\mathfrak A}\\
&=F_{11}\cdot\frac{\mathfrak A-2(\partial_zu)^4-2(\partial_zu)^2(\partial_zv)^2}{\mathfrak A}+F_{22}\cdot\frac{-2(\partial_zu)^4+2(\partial_zu)^2(\partial_zv)^2)}{\mathfrak A}+F_{12}\cdot\frac{(\partial_zu)^3\partial_zv-2\partial_zu(\partial_zv)^3}{\mathfrak A}
\end{aligned}
\end{equation}
\begin{equation}
\label{scheme9}
\begin{aligned}
&\partial_tQ_{11}\cdot\frac{(\partial_zv)^4-2(\partial_zu)^2(\partial_zv)^2}{\mathfrak A}+\partial_tQ_{22}+\partial_tQ_{22}\cdot\frac{2(\partial_zv)^4+2(\partial_zu)^2(\partial_zv)^2}{\mathfrak A}+\partial_tQ_{12}\cdot\frac{2(\partial_zu)^3\partial_zv-\partial_zu(\partial_zv)^3}{\mathfrak A}\\
&=F_{11}\cdot \frac{-(\partial_zv)^4+2(\partial_zu)^2(\partial_zv)^2}{\mathfrak A}+F_{22}\cdot\frac{\mathfrak A-2(\partial_zv)^4-2(\partial_zu)^2(\partial_zv)^2}{\mathfrak A}+F_{12}\cdot \frac{-2(\partial_zu)^3(\partial_zv)+\partial_zu(\partial_zv)^3}{\mathfrak A}\\
\end{aligned}
\end{equation}
\begin{equation}
\label{scheme10}
\begin{aligned}
&\partial_tQ_{11}\cdot\frac{3\partial_zu(\partial_zv)^3}{\mathfrak A}+\partial_tQ_{22}\cdot\frac{3(\partial_zu)^3\partial_zv}{\mathfrak A}+\partial_tQ_{12}+\partial_tQ_{12}\cdot\frac{2(\partial_zu)^4+2(\partial_zv)^4-2(\partial_zu)^2(\partial_zv)^2}{\mathfrak A}\\
&+F_{11}\cdot\frac{-3\partial_zu(\partial_zv)^3}{\mathfrak A}+F_{22}\cdot\frac{-3(\partial_zu)^3\partial_zv}{\mathfrak A}+F_{12}\cdot\frac{\mathfrak A-2(\partial_zu)^4-2(\partial_zv)^4+2(\partial_zu)^2(\partial_zv)^2}{\mathfrak A}
\end{aligned}
\end{equation}
and
\begin{equation}
\label{thetaextra1}
(Q_{11}+\theta^2Q_{22}-\theta Q_{12})^2=\left(\frac{2\theta^4+\theta^2+2}{2-\theta^2}\right)^2Q^2_{11}=\left(\frac{2\theta^4+\theta^2+2}{2\theta^2-1}\right)^2Q^2_{22}=\left(\frac{2\theta^4+\theta^2+2}{3\theta}\right)^2Q^2_{12}
\end{equation}
 and
\[
\tilde {\mathfrak C}_1:=\frac{2(\partial_zu)^4+2(\partial_zu)^2(\partial_zv)^2}{\mathfrak A}=\frac{2\theta^4+2\theta^2}{2\theta^4+\theta^2+2}, \quad\tilde {\mathfrak C}_2:= \frac{(\partial_zu)^4-2(\partial_zu)^2(\partial_zv)^2}{\mathfrak A}=\frac{\theta^4-2\theta^2}{2\theta^4+\theta^2+2},\]\[     \tilde {\mathfrak C}_3:= \frac{-(\partial_zu)^3(\partial_zv)+2\partial_zu(\partial_zv)^3}{\mathfrak A}=\frac{-\theta^3+2\theta}{2\theta^4+\theta^2+2},\quad\tilde {\mathfrak C}_4:=\frac{(\partial_zv)^4-2(\partial_zu)^2(\partial_zv)^2}{\mathfrak A}=\frac{1-2\theta^2}{2\theta^4+\theta^2+2},\]\[ \tilde {\mathfrak C}_5:= \frac{2(\partial_zv)^4+2(\partial_zu)^2(\partial_zv)^2}{\mathfrak A}=\frac{2+2\theta^2}{2\theta^4+\theta^2+2}, \quad     \tilde {\mathfrak C}_6:=\frac{2(\partial_zu)^3(\partial_zv)-\partial_zu(\partial_zv)^3}{\mathfrak A}=\frac{2\theta^3-\theta}{2\theta^4+\theta^2+2}
\]
\[
\tilde {\mathfrak C}_7:=\frac{3\partial_zu(\partial_zv)^3}{\mathfrak A}=\frac{3\theta}{2\theta^4+\theta^2+2}, \quad\tilde {\mathfrak C}_8:= \frac{3(\partial_zu)^3\partial_zv}{\mathfrak A}=\frac{3\theta^3}{2\theta^4+\theta^2+2},\]\[ \tilde {\mathfrak C}_9:=\frac{2(\partial_zu)^4+2(\partial_zv)^4-2(\partial_zu)^2(\partial_zv)^2}{\mathfrak A}=\frac{2\theta^4-2\theta^2+2}{2\theta^4+\theta^2+2}
\]
and \eqref{scheme8}-\eqref{scheme10} are equivalent to
\begin{equation}
\label{scheme11}
\partial_t\left\{(1+\tilde {\mathfrak C}_1)Q_{11}+\tilde {\mathfrak C}_2Q_{22}+\tilde {\mathfrak C}_3Q_{12}\right\}=(1-\tilde {\mathfrak C}_1)F_{11}-\tilde {\mathfrak C}_2F_{22}-\tilde {\mathfrak C}_3F_{12}:=\mathfrak R_1
\end{equation}
\begin{equation}
\label{scheme12}
\partial_t\left\{\tilde {\mathfrak C}_4Q_{11}+(1+\tilde {\mathfrak C}_5)Q_{22}+\tilde {\mathfrak C}_6Q_{12}\right\}=-\tilde {\mathfrak C}_4F_{11}+(1-\tilde {\mathfrak C}_5)F_{22}-\tilde {\mathfrak C}_6F_{12}:=\mathfrak R_2
\end{equation}
\begin{equation}
\label{scheme13}
\partial_t\left\{\tilde {\mathfrak C}_7Q_{11}+\tilde {\mathfrak C}_8Q_{22}+(1+\tilde {\mathfrak C}_9)Q_{12}\right\}=-\tilde {\mathfrak C}_7F_{11}-\tilde {\mathfrak C}_8F_{22}+(1-\tilde {\mathfrak C}_9)F_{12}:=\mathfrak R_3
\end{equation}
remind for $\theta=\frac{\partial_zu}{\partial_zv}$, we have \[2\theta Q_{11}+\theta Q_{22}+Q_{12}=0, \quad Q_{11}+2Q_{22}+\theta Q_{12}=0.\] Thus we can solve 
\begin{equation}
\label{thetaextra0}
Q_{22}=\frac{2\theta^2-1}{2-\theta^2}Q_{11},\quad Q_{12}=-\frac{3\theta}{2-\theta^2}Q_{11}, \quad Q_{11}=\frac{2-\theta^2}{2\theta^2-1}Q_{22}, \quad Q_{12}=-\frac{3\theta}{2\theta^2-1}Q_{22}
\end{equation}
from direct calculations, $\tilde {\mathfrak C}_1+\frac{2\theta^2-1}{2-\theta^2}\tilde {\mathfrak C}_2-\frac{3\theta}{2-\theta^2}\tilde {\mathfrak C}_3=0$. Together with \eqref{thetaextra0}, we have $(1+\tilde {\mathfrak C}_1)Q_{11}+\tilde {\mathfrak C}_2Q_{22}+\tilde {\mathfrak C}_3Q_{12}=Q_{11}$. Similarly, $\tilde {\mathfrak C}_4Q_{11}+(1+\tilde {\mathfrak C}_5)Q_{22}+\tilde {\mathfrak C}_6Q_{12}=Q_{22}, \tilde {\mathfrak C}_7Q_{11}+\tilde {\mathfrak C}_8Q_{22}+(1+\tilde {\mathfrak C}_9)Q_{12}=Q_{12}$. Next, we denote \begin{equation}
\label{theta}
\tilde\theta_1=2\theta^2-1,\quad \tilde\theta_2=\theta^2(2-\theta^2),\quad\tilde \theta_3=\frac 13(2-\theta^2)(2\theta^2-1)
\end{equation}
denote $\mathfrak Q_1:=Q_{11}, \mathfrak Q_2:=Q_{22}, \mathfrak Q_3:=Q_{12}$ for convenience. Then
\begin{equation}
\label{scheme18}
\begin{aligned}
&\frac 12\frac{d}{dt}\sum_{i=1}^3\int \tilde\theta_i (\mathfrak Q_i)^2=\sum_{i=1}^3\int \tilde\theta_i\mathfrak Q_i\mathfrak R_i
+\sum_{i=1}^3\frac{\partial_t\tilde\theta_i(\mathfrak Q_i)^2}2:=I_1+I_2
\end{aligned}
\end{equation}
notice that $\tilde{\mathfrak C}_i, \partial_t\tilde{\mathfrak C}_i, \tilde\theta_j, \partial_t\tilde\theta_j$ are uniformly bounded for $1\le i\le 9, 1\le j\le 3$, from Cauchy-Schwartz inequality,  $I_2\lesssim \int Q_{11}^2+Q_{22}^2+Q_{12}^2$. Next we come to estimate $I_1$. Define the operator $\mathfrak T:=u\partial_x+v\partial_y+w\partial_z$. Then
\begin{equation}
\label{scheme21}
F_{11}=\underbrace{-u\partial_xQ_{11}-v\partial_yQ_{11}-w\partial_zQ_{11}}_{\mathfrak TQ_{11}}+\underbrace{\partial_z^2Q_{11}-\left(aQ_{11}-b(Q^2_{11}+Q^2_{12}-\frac 13\mbox{trl}(Q^2))+cQ_{11}\mbox{trl}(Q^2)\right)}_{\mathfrak D_{11}}\underbrace{+\omega Q_{12}}_{\mathfrak V_{11}}
\end{equation}
\begin{equation}
\label{scheme22}
F_{22}=\underbrace{-u\partial_xQ_{22}-v\partial_yQ_{22}-w\partial_zQ_{22}}_{\mathfrak TQ_{22}}+\underbrace{\partial_z^2Q_{22}-\left(aQ_{22}-b(Q^2_{12}+Q^2_{22}-\frac 13\mbox{trl}(Q^2))+cQ_{22}\mbox{trl}(Q^2)\right)}_{\mathfrak D_{22}}\underbrace{-\omega Q_{12}}_{\mathfrak V_{22}}
\end{equation}
\begin{equation}
\label{scheme23} 
F_{12}=\underbrace{-u\partial_xQ_{12}-v\partial_yQ_{12}-w\partial_zQ_{12}}_{\mathfrak TQ_{12}}+\underbrace{\partial_z^2Q_{12}-\left(aQ_{12}-bQ_{12}(Q_{11}+Q_{22})+cQ_{12}\mbox{trl}(Q^2)\right)}_{\mathfrak D_{12}}\underbrace{-\frac 12\omega (Q_{11}-Q_{22})}_{\mathfrak V_{12}}
\end{equation}
remind that $\partial_xu+\partial_yv+\partial_zw=0$. For any functions $f, \Lambda$, from integration by parts, we have \[\int \Lambda \mathfrak Tf=-\int (\partial_x\Lambda\cdot u+\partial_y\Lambda\cdot v+\partial_z\Lambda\cdot w)f,\quad\int \Lambda \mathfrak Tf\cdot f=-\frac 12\int (\partial_x\Lambda \cdot u+\partial_y\Lambda\cdot v+\partial_z\Lambda\cdot w)f^2\] From direct calculations, $\tilde\theta_1\tilde{\mathfrak C_2}=\tilde\theta_2\tilde{\mathfrak C_4}:=\Theta_1,\quad \tilde\theta_3\tilde{\mathfrak C_7}=\tilde\theta_1\tilde{\mathfrak C_3}:=\Theta_2,\quad \tilde\theta_2\tilde{\mathfrak C_6}=\tilde\theta_3\tilde{\mathfrak C_8}:=\Theta_3$. Thus 
\begin{equation}
\label{scheme24}
\begin{aligned}
I_{1.1}:=&\int\tilde\theta_1 Q_{11}((1-\tilde {\mathfrak C}_1)\mathfrak TQ_{11}-\tilde {\mathfrak C}_2\mathfrak TQ_{22}-\tilde {\mathfrak C}_3\mathfrak TQ_{12}) 
+\int\tilde\theta_2Q_{22} (-\tilde {\mathfrak C}_4\mathfrak TQ_{11}+(1-\tilde {\mathfrak C}_5)\mathfrak TQ_{22}-\tilde {\mathfrak C}_6\mathfrak TQ_{12})\\
+&\int\tilde\theta_3 Q_{12}(-\tilde {\mathfrak C}_7\mathfrak TQ_{11}-\tilde {\mathfrak C}_8\mathfrak TQ_{22}+(1-\tilde {\mathfrak C}_9)\mathfrak TQ_{12})\\
=-&\frac 12\sum_{i=1}^3\int [\partial_x(\tilde\theta_i(1-\tilde {\mathfrak C}_{4i-3}))u+\partial_y(\tilde\theta_i(1-\tilde {\mathfrak C}_{4i-3}))v+\partial_z(\tilde\theta_i(1-\tilde {\mathfrak C}_{4i-3}))w)](\mathfrak Q_i)^2 \\
+2&\int(\partial_x\Theta_1 u+\partial_y\Theta_1 v+\partial_z\Theta_1 w)Q_{11}Q_{22}+ (\partial_x\Theta_2 u+\partial_y\Theta_2 v+\partial_z\Theta_2 w)Q_{11}Q_{12}+ (\partial_x\Theta_3 u+\partial_y\Theta_3 v+\partial_z\Theta_3 w)Q_{22}Q_{12}\\
\end{aligned}
\end{equation}
notice that $\partial(\tilde\theta_i(1-\tilde {\mathfrak C}_{4i-3})), \partial\Theta_i\sim \partial\theta$ for $1\le i\le 3$, from $\partial_x\theta\cdot u+\partial_y\theta\cdot v+\partial_z\theta\cdot w$ is bounded, we have $I_{1.1}\lesssim \int Q_{11}^2+Q_{12}^2+Q_{22}^2$. The terms includng $\mathfrak V_{11}, \mathfrak V_{12}, \mathfrak V_{22}$ can be controlled by $\int Q_{11}^2+Q_{22}^2+Q_{12}^2$ because $\omega$ is uniformly bounded.\\Next, for the diffusion terms of $I_1$, which are $\mathfrak D_{11}, \mathfrak D_{22}, \mathfrak D_{12}$, we start by studying the quadratic form
\begin{equation}
\label{scheme25}
\begin{aligned}
Q(x,y,z):=&\tilde \theta_1(1-\tilde{\mathfrak C_1})x^2+\tilde \theta_2(1-\tilde{\mathfrak C_5})y^2+\tilde \theta_3(1-\tilde{\mathfrak C_9})z^2-2\Theta_1xy-2\Theta_2xz-2\Theta_3yz
\end{aligned}
\end{equation}
it is equivalent to $(x,y,z)A(x,y,z)^T$, where the matrix $A$ has the form
\begin{equation}
\label{matrix1}
A=
\begin{pmatrix}
\tilde \theta_1(1-\tilde{\mathfrak C_1}) & -\Theta_1&-\Theta_2\\
-\Theta_1 & \tilde \theta_2(1-\tilde{\mathfrak C_5})&-\Theta_3\\
-\Theta_2 & -\Theta_3& \tilde \theta_3(1-\tilde{\mathfrak C_9})\\
\end{pmatrix}
=\frac{(2-\theta^2)(2\theta^2-1)}{2\theta^4+\theta^2+2}\cdot\begin{pmatrix}
1&\theta^2&-\theta\\
\theta^2&\theta^4&-\theta^3\\
-\theta&-\theta^3&\theta^2
\end{pmatrix}
\end{equation}
which means $Q(x,y,z)=\frac{(2-\theta^2)(2\theta^2-1)}{2\theta^4+\theta^2+2}(x+\theta^2y-\theta z)^2$. We divide the values of $\theta$ into two cases.\\
\underline{\textbf{Case 1: $\theta^2\in (\frac 12,2)$.}} Then $\tilde\theta_1, \tilde\theta_2, \tilde\theta_3>0, (2-\theta^2)(2\theta^2-1)>0$. So for the term $aQ_{ij}$ in $\mathfrak D_{ij}$, it equals to
\[
-a\frac{(2-\theta^2)(2\theta^2-1)}{2\theta^4+\theta^2+2}\int (Q_{11}+\theta^2Q_{22}-\theta Q_{12})^2
\lesssim_{\theta}-a\int Q_{11}^2+Q_{22}^2+Q_{12}^2
\]
similarly for the term   $cQ_{ij}\text{tr}(Q^2)$ in $\mathfrak D_{ij}$, remind that $\text{tr}(Q^2)\gtrsim Q_{11}^2+Q_{22}^2+Q_{12}^2$, so it equals to
\begin{equation*}
\begin{aligned}
-c\frac{(2-\theta^2)(2\theta^2-1)}{2\theta^4+\theta^2+2}\int (Q_{11}+\theta^2Q_{22}-\theta Q_{12})^2\text{tr}(Q^2)
&\lesssim_{\theta}-c\int( Q_{11}^2+Q_{22}^2+Q_{12}^2)^2\lesssim-c\int Q_{11}^4+Q_{22}^4+Q_{12}^4
\end{aligned}
\end{equation*}
 for the term $bQ_{ij}Q_{i'j'}$ in $\mathfrak D_{ij}$, from Holder inequality, it can be controlled by \[|b|\left(\int Q_{11}^2+Q_{22}^2+Q_{12}^2 \right)^\frac 12\left( \int( Q_{11}^4+Q_{22}^4+Q_{12}^4) \right)^\frac 12.\] Finally, we deal with the term $\partial_z^2Q_{ij}$ in $\mathfrak D_{ij}$. From integration by parts, and similarl to \eqref{scheme25}, it can be written as 
 \begin{equation}
 \label{scheme26}
 \begin{aligned}
 \underbrace{-\nu_2\int (\partial_z Q_{11}, \partial_z Q_{22}, \partial_z Q_{12})A (\partial_z Q_{11}, \partial_z Q_{22}, \partial_z Q_{12})^T}_{\text{Term 1}}+\text{other terms}
 \end{aligned}
 \end{equation}
 Similarly as before, Term 1 equals to $-\frac{(2-\theta^2)(2\theta^2-1)}{2\theta^4+\theta^2+2}\int (\partial_zQ_{11}+\theta^2\partial_zQ_{22}-\theta \partial_zQ_{12})^2$. Take $\partial_z$ of \eqref{thetaextra0}, we have
 \begin{equation*}
 \begin{aligned}
 &\partial_zQ_{11}+\theta^2\partial_zQ_{22}-\theta \partial_zQ_{12}\\=&\frac{2\theta^4+\theta^2+2}{2-\theta^2}\partial_zQ_{11}+p_1(\theta)\partial_z\theta Q_{11}
 =\frac{2\theta^4+\theta^2+2}{2\theta^2-1}\partial_zQ_{22}+p_2(\theta)\partial_z\theta Q_{22}=-\frac{2\theta^4+\theta^2+2}{3\theta}\partial_zQ_{12}-p_3(\theta)\partial_z\theta Q_{12}
 \end{aligned}
 \end{equation*}
 where $p_1(\theta), p_2(\theta), p_3(\theta)$ are uniformly bounded in $\theta$. So there exist constants $a(\theta), b(\theta)>0$, such that
 \begin{equation*}
 \begin{aligned}
 \text{Term 1}&\le -a(\theta)\int |\partial_zQ_{11}|^2+|\partial_zQ_{22}|^2+|\partial_zQ_{12}|^2+b(\theta)\int|\partial_z\theta|^2(Q^2_{11}+Q_{22}^2+Q_{12}^2)
 \end{aligned}
 \end{equation*}
 remind that we suppose $|\partial_z\theta|$ is uniformly bounded in $z\in [0,1]$. Thus if we choose $a\gg 1$, then $b(\theta)|\partial_z\theta|^2$ can be controlled by $a$. The estimate of Term 1 is thus finished. For the estimate of other terms, notice that they all have the form like $\int_{\mathbb R^2\times [0,1]}\partial_z\tilde \theta_{k}\partial_zQ_{ij}Q_{i'j'} d\textbf{x}$ or $ \int_{\mathbb R^2\times [0,1]}\partial_z\mathfrak C_{l}\partial_zQ_{ij}Q_{i'j'} d\textbf{x}$, where $1\le k\le 3, 1\le l\le 9, 1\le i,j,i',j'\le 2$. Because $\partial_z\tilde \theta_{k}, \partial_z\mathfrak C_{l}$ can be written as $\partial_z\theta\times $ uniformly bounded functions of $\theta$, from Holder inequality, they can all be controlled by $\left(\int |\partial_zQ_{11}|^2+|\partial_zQ_{22}|^2+|\partial_zQ_{12}|^2\right)^\frac 12\left(\int Q^2_{11}+Q_{22}^2+Q_{12}^2\right)^\frac 12$. \\
 Combine all the estimates above together, we have
 \[\frac 12\frac{d}{dt}\int\tilde\theta_1 Q_{11}^2+\tilde\theta_2Q_{22}^2+\tilde\theta_3\mathfrak Q_{12}^2
\lesssim-(\nu_2-\varepsilon_0)\int{ |\partial_zQ|^2}-\int Q^2-\int Q^4.\] From Gronwall inequality, there exist constants $C, \lambda$ depending on $a,b,c, c_1,c_2, \varepsilon$, such that for any $t>0$,
\[
\int Q^2(t)+(\nu_2-\varepsilon)\int_0^t{e^{s-t}\int |\partial_zQ|^2(s)ds}+\int_0^t{e^{s-t}\int Q^4(s)ds}\le Ce^{-\lambda t}
\]
\underline{\textbf{Case 2:  If $\theta^2\in (0,\frac 12)\cup (2, \infty)$.}}
 From direct calculations, $\tilde\theta_1Q_{11}^2+\tilde\theta_2Q_{2}^2+\tilde\theta_3Q_{12}^2=\frac{(2\theta^2-1)(2+3\theta^2)}{2-\theta^2}Q_{11}^2$, where $\frac{(2\theta^2-1)(2+3\theta^2)}{2-\theta^2}<0$ because  $\theta^2\in (0,\frac 12)\cup (2, \infty)$. So in fact, the integral on the left is the same order as $-\frac d{dt}\int Q_{11}^2+Q_{22}^2+Q_{12}^2$. Moreover, the matrix $A$ in \eqref{matrix1} is negative instead of positive. So we just need to change all the symbols of $+$ and $-$, and we obtain the similar result.
 
 Next we come to calculate the $H^1$ norm of $Q_{11}, Q_{12}, Q_{22}$. Remind \eqref{scheme11}-\eqref{scheme13}. For any $\partial$ belongs to $\{\partial_x, \partial_y,\partial_z\}$, we have
 \[
\frac 12\frac d{dt}\sum_{i=1}^3\int \tilde\theta_i|\nabla \mathfrak Q_i|^2=\sum_{i=1}^3\int \tilde\theta_i\nabla \mathfrak Q_i\nabla \mathfrak R_i+\frac 12\sum_{i=1}^3\int \partial_t\tilde\theta_i|\nabla \mathfrak Q_i|^2:=\mathfrak I_1+\mathfrak I_2
 \]
 similarly as before, $\mathfrak I_2\lesssim \sum_i\int |\nabla Q_{ij}|^2$.  Next, notice that
 \[
 \nabla F_{11}=\mathfrak T(\nabla Q_{11})\underbrace{-\nabla u\partial_xQ_{11}-\nabla v \partial_y Q_{11}-\nabla w\partial_zQ_{11}}_{\tilde{\mathfrak T}_{11}}-\nabla \mathfrak D_{11}\underbrace{-\nabla \omega Q_{12}-\omega\nabla Q_{12}}_{\tilde{\mathfrak V}_{11}}
 \]
 we also have similar equations for $\nabla F_{12}, \nabla F_{22}$. The terms including $\mathfrak T(\nabla Q_{ij})$ can be estimated as \eqref{scheme24}, which can be controlled by $\int |\nabla Q_{ij}|^2$. Moreover, remind that $\nabla\tilde{\textbf{u}}$ and $\nabla\omega$ are uniformly bounded, the integrals including $\tilde{\mathfrak T}$ and $\tilde{\mathfrak V}$ are all controlled by $ \sum_i\int |\nabla Q_{ij}|^2$. Finally we estimate the integrals including $\nabla\mathfrak D$. Similarly as before, the terms including $a,b,c$ can be finally controlled by $-a\sum_i\int |\nabla Q_{ij}|^2 -c\sum_i\int |\nabla (Q^2_{ij})|^2 $. For the terms including $\partial_z^2\nabla Q_{ij}$, from the boundary condition of $Q_{ij}$, we integrate by parts to get that it can be written as 
 \[
  {- \nu_2\int (\partial_z \nabla Q_{11}, \partial_z \nabla Q_{22}, \partial_z \nabla Q_{12})A (\partial_z \nabla Q_{11}, \partial_z \nabla Q_{22}, \partial_z\nabla  Q_{12})^T}+\text{other terms}
 \]
 which can be controlled by
 \[
 -a'(\theta)\int {|\partial_z \nabla Q_{ij}|^2}+ b'(\theta)\int {|\partial_z \nabla Q_{ij}|^2}
 \]
 here $a'(\theta), b'(\theta)$ only depend on $\theta, \nabla\theta$. Thus if we choose $a,c$ large enough,  we have  $\frac d{dt}\int |\nabla Q_{ij}|^2\lesssim -(\nu_2-\varepsilon_0)\int |\partial_z\nabla Q_{ij}|^2-\int |\nabla Q_{ij}|^2-\int |Q_{ij}\nabla Q_{ij}|^2$, no matter $\theta^2\in (\frac 12,2)$ or $\theta^2\in (0,\frac 12)\cup (2,\infty)$, and we obtain the same result by using Gronwall inequality. This finishes the proof of Lemma \ref{convergenceQ1}.
\end{proof}
Moreover, for $\theta^2\in (\frac 12, 2)$, from the analysis before, for any $\nu_2, c>0, b\in\mathbb R$, if we choose $a$ large enough, then there exist constants $\lambda_i>0 (1\le i\le 3)$ that do not depend on $\theta$, such that
\begin{equation}\label{tougha1}
\begin{aligned}
&\frac{d}{dt}\int {\tilde\theta_1 |\nabla Q_{11}|^2+ \tilde\theta_2 |\nabla Q_{22}|^2+\tilde\theta_3 |\nabla Q_{12}|^2}+\nu_2\lambda_1||\partial_z(\nabla Q_{11}, \nabla Q_{22}, \nabla Q_{12})||^2_{L^2}\\+&a \lambda_2||(\nabla Q_{11}, \nabla Q_{22}, \nabla Q_{12})||^2_{L^2}+ c\lambda_3||(Q_{11}, Q_{22}, Q_{12})\otimes (\nabla Q_{11}, \nabla Q_{22}, \nabla Q_{12})||^2_{L^2}\le 0
\end{aligned}
\end{equation}
which equals to
\begin{equation}\label{tougha2}
\begin{aligned}
&||\tilde\theta_1 |\nabla Q_{11}|^2+ \tilde\theta_2 |\nabla Q_{22}|^2+\tilde\theta_3 |\nabla Q_{12}|^2||_{L^\infty(0,t)L^1(\mathcal S)}\\+&\nu_2\lambda_1\int_0^t||\partial_z(\nabla Q_{11}(s,\cdot), \nabla Q_{22}(s,\cdot), \nabla Q_{12}(s,\cdot))||^2_{L^2(\mathcal S)}+a \lambda_2\int_0^t||(\nabla Q_{11}(s,\cdot), \nabla Q_{22}(s,\cdot), \nabla Q_{12}(s,\cdot))||^2_{L^2(\mathcal S)}\\+& c\lambda_3\int_0^t||(Q_{11}(s,\cdot), Q_{22}(s,\cdot), Q_{12}(s,\cdot))\otimes (\nabla Q_{11}(s,\cdot), \nabla Q_{22}(s,\cdot), \nabla Q_{12}(s,\cdot))||^2_{L^2(\mathcal S)}\\\le& ||\tilde\theta_1 |\nabla Q_{11}(0,\cdot)|^2+ \tilde\theta_2 |\nabla Q_{22} (0,\cdot)|^2+\tilde\theta_3 |\nabla Q_{12}(0,\cdot)|^2||_{L^1(\mathcal S)}.
\end{aligned}
\end{equation}
The following lemma gives the result about $Q_{13}, Q_{23}$.
\begin{lem}
\label{lemtough1}
Under the assumptions of Theorem \ref{convergenceQ1}, we denote the functions
\begin{equation}
\label{toughb0}
\Upsilon_1:=\frac{{\textbf u}\cdot\theta}{\partial_zv}, \quad\Upsilon_2:=\frac{\omega}{\partial_zv},\quad \Upsilon_3:=\frac{\partial_z\theta}{\partial_zv},  \quad\Upsilon_4:=\frac{\partial_z^2\theta}{\partial_zv},\quad  \Upsilon_5:=\frac{\partial_5\theta}{\partial_zv}
\end{equation}
if $\Upsilon_i (1\le i\le 5)$ are uniformly bounded for any $t\ge 0, (x,y,z)\in\mathcal S$, then $Q_{13}, Q_{23}$ are $H^1$ bounded and have exponential decay of $L^2$ norm, i.e, there exist constants $\tilde C_1,\tilde C_2, \tilde\lambda>0 $, such that
\[
\int Q_{13}^2(t,\cdot)+Q_{23}^2(t,\cdot) dt\le \tilde C_1e^{-\tilde\lambda t}, \quad \int_0^\infty\int |\nabla Q_{13}^2(t,\cdot)|+|\nabla Q_{23}^2(t,\cdot)| dt\le \tilde C_2.
\]
moreover, if we suppose $\partial_z\theta\equiv 0$, i.e. there exist functions $\mathsf g_1(x,y), \mathsf g_2(x,y)$, such that $u=\mathsf g_1v+\mathsf g_2$, then $Q_{13}, Q_{23}$ have the same behaviour of $H^1$ convergence as $(Q_{11}, Q_{12},Q_{22})$.
\end{lem}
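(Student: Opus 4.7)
The plan is to exploit the explicit algebraic formulas in \eqref{scheme1} that express $Q_{13}, Q_{23}$ in closed form through $Q_{11}, Q_{12}, Q_{22}$, the velocity field and its derivatives. Observe first that $\mathfrak A = (\partial_zv)^4(2\theta^4 + \theta^2 + 2)$ and each of $A_{11}, A_{12}, A_{22}$ carries a factor $(\partial_zv)^2$, so the ratios $A_{ij}/\mathfrak A$ behave like $1/(\partial_zv)^2$. Similarly, each non-$\partial_t$ term in $M_1, M_2$ carries one power of $\partial_zu$ or $\partial_zv$, leaving at worst a $1/\partial_zv$ factor after division. These apparent singularities are exactly what the auxiliary functions $\Upsilon_1, \ldots, \Upsilon_5$ are designed to absorb: substituting \eqref{scheme21}--\eqref{scheme23} into $M_1, M_2$ and using the Prandtl equations \eqref{Prandtl3} to rewrite $\partial_t\partial_zu, \partial_t\partial_zv$ as $\nu_1\partial_z^3u, \nu_1\partial_z^3v$ minus transport and pressure-gradient terms, all the potentially singular coefficients collapse into bounded combinations of $\theta, \omega$ and the $\Upsilon_i$. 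One therefore obtains schematic identities
\begin{equation*}
Q_{13} = \mathcal L_{13}\bigl(Q_{11}, Q_{12}, Q_{22}, \partial_z Q_{11}, \partial_z Q_{12}, \partial_z Q_{22}\bigr), \quad Q_{23} = \mathcal L_{23}(\cdots),
\end{equation*}
where $\mathcal L_{13}, \mathcal L_{23}$ are linear expressions with $L^\infty$-bounded coefficients.

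With these identities in hand, the exponential $L^2$ decay of $Q_{13}, Q_{23}$ follows by squaring, integrating, applying Cauchy--Schwarz, and invoking the bound
$\|Q(t)\|^2_{L^2} + \int_0^t e^{\lambda(s-t)}\|\partial_zQ(s)\|_{L^2}^2\,ds \lesssim e^{-\lambda t}$ already established in Theorem~\ref{convergenceQ1}. For the $H^1$ bound I would differentiate the schematic identities in $x,y,z$. Each derivative either falls on $Q_{ij}$ or $\partial_z Q_{ij}$ (controlled by the $H^1$ plus $\partial_z$-dissipation part of Theorem~\ref{convergenceQ1}), or on the coefficient functions, which depend only on $\theta, \omega, \nabla\textbf u, \nabla\omega, \nabla\theta$ and the $\Upsilon_i$, all uniformly bounded by hypothesis. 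Integrating in time and combining with the dissipation $\int_0^\infty\|\partial_z\nabla Q\|_{L^2}^2\,ds < \infty$ then yields $\int_0^\infty \|\nabla Q_{13}\|_{L^2}^2 + \|\nabla Q_{23}\|_{L^2}^2\,dt \le \tilde C_2$.

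For the final statement, if $\partial_z\theta\equiv 0$ then $u = \mathsf g_1(x,y)\,v + \mathsf g_2(x,y)$ and all contributions to $\mathcal L_{13}, \mathcal L_{23}$ carrying $\Upsilon_3, \Upsilon_4, \Upsilon_5$ vanish, so $Q_{13}, Q_{23}$ become linear combinations of $Q_{ij}$ and $\partial_z Q_{ij}$ with bounded coefficients depending only on $\theta, \omega$ and on bounded Prandtl quantities divided by $\partial_zv$. Rather than relying solely on the algebraic identity, I would in this case derive genuine evolution equations for $Q_{13}, Q_{23}$ (by passing to the hydrostatic limit in \eqref{Qij1}$_4$, \eqref{Qij1}$_5$), append them to the $(Q_{11}, Q_{12}, Q_{22})$ system with analogous weights $\tilde\theta_i$ adapted to the enlarged unknown, and run the weighted energy argument of Section~\ref{section3} essentially verbatim to obtain exactly the same $H^1$ convergence as for $(Q_{11}, Q_{12}, Q_{22})$.

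The principal obstacle is the bookkeeping of Step~1: one must track precisely how every $1/\partial_zv$ generated by $1/\mathfrak A$ is paired either with an explicit $\partial_zv$ from $F_{ij}$ or with an $\Upsilon_i$ produced by rewriting $\partial_t\partial_zu, \partial_t\partial_zv$ via Prandtl. The hypotheses on $\Upsilon_1,\ldots,\Upsilon_5$ are essentially the minimal list guaranteeing that each singular factor is absorbed; missing any one of them would leave an uncontrolled $\frac{\partial_z^k\theta}{\partial_zv}$ or $\frac{\textbf u\theta}{\partial_zv}$ contribution in the final expressions for $Q_{13}, Q_{23}$.
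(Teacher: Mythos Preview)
Your overall strategy---exploit \eqref{scheme1} to write $Q_{13},Q_{23}$ explicitly and then read off the bounds from Theorem~\ref{convergenceQ1}---is exactly the paper's route, but your schematic identity $Q_{13}=\mathcal L_{13}(Q_{ij},\partial_zQ_{ij})$ hides the one nontrivial step. Recall that each $F_{ij}$ in \eqref{scheme21}--\eqref{scheme23} carries a $\nu_2\partial_z^2Q_{ij}$, a full transport $\textbf u\cdot\nabla Q_{ij}$, and a cubic potential $cQ_{ij}\,\mathrm{trl}(Q^2)$. Naively these survive in $M_1,M_2$ divided by $\partial_zv$, and none of the $\Upsilon_i$ control $1/\partial_zv$, $u/\partial_zv$, or pointwise-in-time $\|\partial_z^2Q_{11}\|_{L^2}$. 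What makes the lemma work is a cancellation the paper computes explicitly: using the constraint relations $Q_{12}=\Xi_1Q_{11}$, $Q_{22}=\Xi_2Q_{11}$ with $\Xi_1=-\tfrac{3\theta}{2-\theta^2}$, $\Xi_2=\tfrac{2\theta^2-1}{2-\theta^2}$, one finds $F_{12}=\Xi_1F_{11}+\text{corrections}$, $F_{22}=\Xi_2F_{11}+\text{corrections}$, and in the numerator of $Q_{13}$ the coefficient of $F_{11}$ is $(2\theta^3+2\theta)+(2-\theta^2)\Xi_1+(\theta^3-2\theta)\Xi_2=0$ identically (and similarly for $Q_{23}$). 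This kills the $\partial_z^2Q_{11}$, the full transport, and the potential terms simultaneously, leaving only the correction terms whose coefficients are precisely the $\Upsilon_i$; see \eqref{toughb5}--\eqref{toughb6}. Without naming this identity your argument does not close, because the hypotheses on $\Upsilon_1,\dots,\Upsilon_5$ are tailored to the post-cancellation expression.

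Two smaller points. First, your plan to handle $\partial_t\partial_zu,\partial_t\partial_zv$ via the Prandtl equation would introduce $\nu_1\partial_z^3u/\partial_zv$ and transport derivatives of $u$ divided by $\partial_zv$, which are not among the $\Upsilon_i$; the paper instead exploits the constraint directly so that only $\partial_t\theta/\partial_zv=\Upsilon_5$ appears. Second, for the case $\partial_z\theta\equiv0$ you propose to pass to the hydrostatic limit in \eqref{Qij1}$_4$--\eqref{Qij1}$_5$ to obtain evolution equations for $Q_{13},Q_{23}$, but the $\varepsilon\partial_tQ^{\varepsilon}_{i3}$ prefactor means these limits are exactly the algebraic constraints \eqref{extra1}$_4$--\eqref{extra1}$_5$, not parabolic equations. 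The paper's argument here is simpler: when $\partial_z\theta\equiv0$ the $\partial_zQ_{11}$ contributions in \eqref{toughb5}--\eqref{toughb6} vanish, so $Q_{13},Q_{23}$ are bounded multiples of $Q_{11}$ and inherit its $H^1$ decay directly.
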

\begin{rmk}
A natural example is that there exists a constant $c\ne 0, \pm\frac {\sqrt 2}2, \pm\sqrt 2$, such that $\theta\equiv c$. Then  $\partial_zu=c\partial_zv$, which menas that $u=cv+f(x,y)$. Together with $\eqref{Prandtl3}_1, \eqref{Prandtl3}_2$, we have $c\partial_xf+\partial_yf=0$, which means that $f(x,y)=f_0(x-cy)$. 
\end{rmk}
\begin{proof}
We need to study the terms $F_{11}, F_{12}, F_{22}$ more carefully. Notice that
\begin{equation}
\label{toughb1}
\begin{cases}
F_{11}=-\textbf{u}\cdot\nabla Q_{11}
+Q_{12}\omega+\nu_2\partial_z^2Q_{11}-\left(aQ_{11}-b(Q^2_{11}+Q^2_{12}-\frac 13\mbox{trl}(Q^2))+cQ_{11}\mbox{trl}(Q^2)\right)\\
F_{12}=-\textbf{u}\cdot\nabla Q_{12}
-\frac 12(Q_{11}-Q_{22})\omega+ \nu_2\partial_z^2Q_{12}-\left(aQ_{12}-bQ_{12}(Q_{11}+Q_{22})+cQ_{12}\mbox{trl}(Q^2)\right)\\
F_{22}=-\textbf{u}\cdot\nabla Q_{22}-Q_{12}\omega+\nu_2\partial_z^2Q_{22}-\left(aQ_{22}-b(Q^2_{12}+Q^2_{22}-\frac 13\mbox{trl}(Q^2))+cQ_{22}\mbox{trl}(Q^2)\right)\\
\end{cases}
\end{equation}
we denote $\Xi^1:=-\frac{3\theta}{2-\theta^2}, \Xi^2=\frac{2\theta^2-1}{2-\theta^2}$ for convenience. Then $Q_{12}=\Xi_1Q_{11}, Q_{22}=\Xi_2 Q_{11}$.
It is not difficult to verify that 
\[
aQ_{12}-bQ_{12}(Q_{11}+Q_{22})+cQ_{12}\mbox{trl}(Q^2)=\Xi_1\left(aQ_{11}-b(Q^2_{11}+Q^2_{12}-\frac 13\mbox{trl}(Q^2))+cQ_{11}\mbox{trl}(Q^2)\right)
\]
\[
aQ_{22}-b(Q^2_{12}+Q^2_{22}-\frac 13\mbox{trl}(Q^2))+cQ_{22}\mbox{trl}(Q^2)=\Xi_2\left(aQ_{11}-b(Q^2_{11}+Q^2_{12}-\frac 13\mbox{trl}(Q^2))+cQ_{11}\mbox{trl}(Q^2)\right)
\]
next,
\[
\partial_z^2Q_{12}=\Xi_1\partial_z^2Q_{11}+2\partial_z\Xi_1\partial_zQ_{11}+\partial_z^2\Xi_1Q_{11},\quad \partial_z^2Q_{22}=\Xi_2\partial_z^2Q_{11}+2\partial_z\Xi_2\partial_zQ_{11}+\partial_z^2\Xi_2Q_{11},
\]
\[
-\textbf{u}\cdot\nabla Q_{12}=-\Xi_1\textbf{u}\cdot \nabla Q_{11}-(\textbf u\cdot \nabla \Xi_1)Q_{11},\quad  -\textbf{u}\cdot\nabla Q_{22}=-\Xi_2\textbf{u}\cdot \nabla Q_{11}-(\textbf u\cdot \nabla \Xi_2)Q_{11}
\]
\[
Q_{12}\omega=\Xi_1\omega Q_{11}, \quad-\frac 12(Q_{11}-Q_{22})\omega=-\frac {1-\Xi_2}{2}\omega Q_{11}\quad -Q_{12}\omega=-\Xi_1\omega Q_{11}
\]
notice that
\[
\nabla\Xi_1=-\frac{3\theta^2+6}{(2-\theta^2)^2}\nabla\theta, \quad\nabla \Xi_2=\frac{6\theta}{(2-\theta^2)^2}\nabla\theta
\]
\[\partial_z^2\Xi_1=-\frac{3\theta^2+6}{(2-\theta^2)^2}\partial_z^2\theta-\frac{6\theta^3+36\theta}{(2-\theta^2)^3}(\partial_z\theta)^2, \quad\partial_z^2\Xi_2=\frac{6\theta}{(2-\theta)^2}\partial_{z}^2\theta+\frac{18\theta^2+12}{(2-\theta^2)^3}(\partial_z\theta)^2
\]
so we have
\begin{equation}
\begin{aligned}
F_{12}&=\Xi_1F_{11}+\frac{3\theta^2+6}{(2-\theta^2)^2}(\textbf u\cdot \nabla \theta)Q_{11}-\left(\Xi_1^2+\frac{1-\Xi_2}{2}\right)\omega\\&-\frac{(6\theta^2+12)\nu_2}{(2-\theta^2)^2}\partial_z\theta\partial_zQ_{11}-\left(\frac{(3\theta^2+6)\nu_2}{(2-\theta^2)^2}\partial_z^2\theta+\frac{(6\theta^3+36\theta)\nu_2}{(2-\theta^2)^3}(\partial_z\theta)^2\right)Q_{11}
\end{aligned}
\end{equation}
and
\begin{equation}
\begin{aligned}
F_{22}&=\Xi_1F_{11}-\frac{6\theta}{(2-\theta^2)^2}(\textbf u\cdot \nabla \theta)Q_{11}-\left(\Xi_1\Xi_2+\Xi_1\right)\omega\\&+\frac{12\theta\nu_2}{(2-\theta^2)^2}\partial_z\theta\partial_zQ_{11}+\left(\frac{(6\theta)\nu_2}{(2-\theta^2)^2}\partial_z^2\theta+\frac{(18\theta^2+12)\nu_2}{(2-\theta^2)^3}(\partial_z\theta)^2\right)Q_{11}
\end{aligned}
\end{equation}
recall \eqref{scheme1}. We have
\[
Q_{13}=\frac{(2\theta^3+2\theta)F_{11}+(2-\theta^2)F_{12}+(\theta^3-2\theta)F_{22}}{(2\theta^4+\theta^2+2)\partial_zv}-\frac{(9\theta^2+6)\partial_t\theta }{(2-\theta^2)(2\theta^4+\theta^2+2)\partial_zv}Q_{11}
\]
and
\[
Q_{23}=\frac{(1-2\theta^2)F_{11}+(2\theta^3-\theta)F_{12}+(2\theta^2+2)F_{22}}{(2\theta^4+\theta^2+2)\partial_zv}+\frac{(6\theta^4+9\theta^2)\partial_t\theta }{(2-\theta^2)(2\theta^4+\theta^2+2)\partial_zv}Q_{11}
\]
from direct calculations, $2\theta^3+2\theta+(2-\theta^2)\Xi_1+(\theta^3-2\theta)\Xi_2=1-2\theta^2+(2\theta^3-\theta)\Xi_1+(2\theta^2+2)\Xi_2=0$.
Moreover,
\begin{equation}
\label{toughb5}
\begin{aligned}
Q_{13}&=\frac{1}{(2\theta^4+\theta^2+2)\partial_zv}\cdot\left(\frac{9\theta^2+6}{2-\theta^2}(\textbf {u}\cdot\nabla\theta) Q_{11}+\frac{3\theta^6-9\theta^4+21\theta^2-12}{2(2-\theta^2)^2}\omega Q_{11}-\frac{(18\theta^2+12)\nu_2}{(2-\theta^2)^2}\partial_z\theta\partial_zQ_{11}\right)\\
&+\frac{1}{(2\theta^4+\theta^2+2)\partial_zv}\left(-\frac{(9\theta^2+6)\nu_2}{2-\theta^2} \partial_z^2\theta-\frac{(24\theta^3+48\theta)\nu_2}{(2-\theta^2)^2}(\partial_z\theta)^2-\frac{9\theta^2+6}{2-\theta^2}\partial_t\theta\right)Q_{11}
\end{aligned}
\end{equation}
\begin{equation}
\label{toughb6}
\begin{aligned}
Q_{23}&=\frac{1}{(2\theta^4+\theta^2+2)\partial_zv}\cdot\left(-\frac{6\theta^3+9\theta}{2-\theta^2}(\textbf {u}\cdot\nabla\theta) Q_{11}-\frac{3\theta(\theta^2+1)(2\theta^2+3)}{2-\theta^2}\omega Q_{11}+\frac{(12\theta^3+18\theta)\nu_2}{(2-\theta^2)^2}\partial_z\theta\partial_zQ_{11}\right)\\
&+\frac{1}{(2\theta^4+\theta^2+2)\partial_zv}\left(-\frac{(6\theta^3+9\theta)\nu_2}{2-\theta^2} \partial_z^2\theta-\frac{(12\theta^6+102\theta^4+24\theta^2+24)\nu_2}{(2-\theta^2)^2}(\partial_z\theta)^2+\frac{6\theta^4+9\theta^2}{2-\theta^2}\partial_t\theta\right)Q_{11}
\end{aligned}
\end{equation}
if $\partial_z\theta\equiv 0$, then $Q_{13}, Q_{23}\sim Q_{11}$. Otherwise $Q_{13}, Q_{23}\lesssim |Q_{11}|+|\partial_zQ_{11}|$. The lemma is proved from the result of Theorem \ref{convergenceQ1}.
\end{proof}
Moreover, we prove a lemma about the $L^2$ norm of $\partial_tQ_{11}, \partial_t\partial_zQ_{11}$, which which be used for proving the convergence of the hydrostatic system towards the anisotropic system.
\begin{lem}
(1)For $\textbf u$ as the solution of \eqref{Prandtl3}, if $u,v,w,\omega, \partial_zu, \Upsilon_i (1\le 5)$ are uniformly bounded for any $t\ge 0, (x,y,z)\in\mathcal S$, then there exists a constant $C>0$, such that for $Q$ as the solution of \eqref{extra1}, we have
\begin{equation}
\label{partialtQ1}
\begin{aligned}
||\partial_tQ_{11}||_{L^2_t(L^2)}+||\partial_zQ_{11}||_{L^\infty_t(L^2)}\le &C(||\partial_zQ_{11}(0,\cdot)||_{L^\infty_t(L^2)}+||Q_{11}||_{L^2_t(H^1)}+||Q^2_{11}||_{L^2_t(L^2)}+||Q^3_{11}||_{L^2_t(L^2)})
\end{aligned}
\end{equation}
(2)For some $j\in\{1,2,3\}$, if $\omega, \partial_ju, \partial_jv, \partial_jw, \partial_j\omega, \partial_z\partial_iu, \partial_j\Upsilon_i (1\le i\le  5)$ are uniformly bounded for any $t\ge 0, (x,y,z)\in\mathcal S$, then there exists a constant $C>0$, such that for $Q$ as the solution of \eqref{extra1}, we have
\begin{equation}
\label{partialtQ1}
\begin{aligned}
&||\partial_t\partial_iQ_{11}||_{L^2_t(L^2)}+||\partial_z\partial_iQ_{11}||_{L^\infty_t(L^2)}\\
&\le C(||\partial_iQ_{11}(0,\cdot)||_{L^\infty_t(L^2)}+||Q_{11}||_{L^2_t(L^2)}+||\partial_iQ_{11}||_{L^2_t(H^1)}+||Q_{11}\partial_iQ_{11}||_{L^2_t(L^2)}+||Q^2_{11}\partial_iQ_{11}||_{L^2_t(L^2)}).
\end{aligned}
\end{equation}
\end{lem}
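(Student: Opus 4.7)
The plan is to treat $\eqref{extra1}_1$ as a scalar forced heat equation for $Q_{11}$ alone and apply a parabolic energy identity in the spirit of the one already used to close Theorem~\ref{convergenceQ1}. Using the algebraic reductions $Q_{12}=\Xi_1 Q_{11}$, $Q_{22}=\Xi_2 Q_{11}$ (established after \eqref{thetaextra0}) and the explicit expressions \eqref{toughb5}--\eqref{toughb6} for $Q_{13},Q_{23}$, I would first rewrite $\eqref{extra1}_1$ as
\[
\partial_t Q_{11}-\nu_2\partial_z^2 Q_{11}=\mathcal G_1(t,x,y,z)\,Q_{11}+\mathcal G_2(t,x,y,z)\cdot\nabla Q_{11}+\mathcal N(Q_{11}),
\]
where $\mathcal N(Q_{11})$ collects the quadratic and cubic self-interactions arising from the $b$ and $c$ terms and from the cross-products $\omega Q_{12}$ and $Q_{13}\partial_z u$, while $\mathcal G_1,\mathcal G_2$ are rational expressions in $\theta$ divided by $(2\theta^4+\theta^2+2)\partial_z v$ multiplied by $\textbf u,\omega,\partial_z u,\partial_z\theta,\partial_t\theta$. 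The key structural point is that \eqref{toughb5}--\eqref{toughb6} contain only $\partial_z Q_{11}$, never $\partial_z^2 Q_{11}$, because $\partial_t\theta$ was eliminated through $\eqref{extra1}_4,\eqref{extra1}_5$ rather than through the heat equation itself. Under the standing hypotheses ($\theta^2$ bounded away from $1/2$ and $2$ and $u,v,w,\omega,\partial_z u,\Upsilon_1,\ldots,\Upsilon_5$ uniformly bounded) the coefficients $\mathcal G_1,\mathcal G_2$ therefore lie in $L^\infty(\mathcal S\times[0,\infty))$.

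For part (1), I would test the rewritten equation against $\partial_t Q_{11}$ and integrate over $\mathcal S$. The diffusive term yields $\tfrac{\nu_2}{2}\tfrac{d}{dt}\|\partial_z Q_{11}\|_{L^2}^2$ after integrating by parts (boundary terms vanish by either boundary condition); the left-hand side contributes $\|\partial_t Q_{11}\|_{L^2}^2$; each forcing term is split off $\partial_t Q_{11}$ by Young's inequality, for instance $\int \mathcal G_2\cdot\nabla Q_{11}\,\partial_t Q_{11}\le \tfrac14\|\partial_t Q_{11}\|_{L^2}^2+C\|\nabla Q_{11}\|_{L^2}^2$, and the cubic piece is handled analogously. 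Absorbing $\tfrac12\|\partial_t Q_{11}\|_{L^2}^2$ and integrating in time gives
\[
\|\partial_t Q_{11}\|_{L^2_t L^2}^2+\|\partial_z Q_{11}(t,\cdot)\|_{L^2}^2\lesssim \|\partial_z Q_{11}(0,\cdot)\|_{L^2}^2+\|Q_{11}\|_{L^2_t H^1}^2+\|Q_{11}^2\|_{L^2_t L^2}^2+\|Q_{11}^3\|_{L^2_t L^2}^2,
\]
which is the announced estimate after taking square roots.

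For part (2), I would apply $\partial_i$ to the rewritten equation and then test against $\partial_t\partial_i Q_{11}$. The new contributions come from (i) commutators $(\partial_i\textbf u)\cdot\nabla Q_{11}$ and $(\partial_i\mathcal G_k)Q_{11},\;(\partial_i\mathcal G_k)\nabla Q_{11}$, which are pointwise bounded once the additional uniform-boundedness hypotheses on $\partial_i u,\partial_i v,\partial_i w,\partial_i\omega,\partial_z\partial_i u,\partial_i\Upsilon_k$ are invoked; (ii) the main piece $\mathcal G_2\cdot\nabla\partial_i Q_{11}$, bounded by Young's inequality by $\tfrac14\|\partial_t\partial_i Q_{11}\|_{L^2}^2 + C\|\nabla\partial_i Q_{11}\|_{L^2}^2$, the latter absorbed into the $\|\partial_i Q_{11}\|_{L^2_t H^1}$ term on the right-hand side of the target bound; (iii) $\partial_i\mathcal N(Q_{11})$, which produces the advertised $Q_{11}\partial_i Q_{11}$ and $Q_{11}^2\partial_i Q_{11}$ terms. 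Absorbing and integrating in time exactly as in part~(1) yields the desired estimate.

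The main obstacle is the structural observation that no second-order vertical derivative $\partial_z^2 Q_{11}$ appears on the right-hand side of the rewritten heat equation, since otherwise the test against $\partial_t Q_{11}$ could not close without an a priori $H^2$ bound on $Q_{11}$ that is not available here. This is where the particular form of \eqref{toughb5}--\eqref{toughb6} is used in an essential way: the earlier elimination of $\partial_t\theta$ via $\eqref{extra1}_4,\eqref{extra1}_5$ was arranged precisely so that $Q_{13}$ and $Q_{23}$ depend on $Q_{11}$ only through $Q_{11}$ itself and $\partial_z Q_{11}$. Once this is confirmed, the remainder reduces to routine parabolic energy bookkeeping analogous to the arguments already carried out in Section~\ref{section3}.
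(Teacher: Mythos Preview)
Your approach is correct and matches the paper's: test $\eqref{extra1}_1$ against $\partial_t Q_{11}$, convert $(\partial_z^2 Q_{11},\partial_t Q_{11})_{L^2}$ into $-\tfrac12\tfrac{d}{dt}\|\partial_z Q_{11}\|_{L^2}^2$, bound the remaining forcing terms by Young, and invoke the explicit formulas \eqref{toughb5}--\eqref{toughb6} together with the uniform bounds on $\Upsilon_i$ to reduce $Q_{13}\partial_z u$ to $Q_{11}$ and $\partial_z Q_{11}$; part (2) is the same after applying $\partial_i$. One small correction to your narrative: the absence of $\partial_z^2 Q_{11}$ in \eqref{toughb5}--\eqref{toughb6} is not because $\partial_t\theta$ was eliminated (it still appears there, controlled by $\Upsilon_5$), but because the algebraic identity $2\theta^3+2\theta+(2-\theta^2)\Xi_1+(\theta^3-2\theta)\Xi_2=0$ makes the $F_{11}$-combination (which carries $\partial_z^2 Q_{11}$) drop out after substituting $F_{12}=\Xi_1 F_{11}+\cdots$, $F_{22}=\Xi_2 F_{11}+\cdots$; this does not affect the argument.
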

\begin{proof}
Remind $Q_{11}$ satisfies $\eqref{extra1}_1$. Thus
\begin{equation}
\label{partialtQ1}
\begin{aligned}
||\partial_tQ_{11}||_{L^2}^2&=-(\textbf u\cdot\nabla Q_{11}, \partial_tQ_{11})_{L^2}+(Q_{12}\omega, \partial_tQ_{11})_{L^2}-(Q_{13}\partial_zu, \partial_tQ_{11})_{L^2}+\nu_2(\partial_z^2Q_{11}^2, \partial_tQ_{11})_{L^2}\\&-\left(aQ_{11}-b(Q^2_{11}+Q^2_{12}-\frac 13\mbox{trl}(Q^2))+cQ_{11}\mbox{trl}(Q^2), \partial_{t}Q_{11}\right)_{L^2}
\end{aligned}
\end{equation}
because $(\partial_z^2Q_{11}, \partial_tQ_{11})_{12}=-\frac 12\frac d{dt}||\partial_zQ_{11}||_{L^2}^2$, we have
\begin{equation}
\label{partialtQ2}
\begin{aligned}
||\partial_tQ_{11}||_{L^2_t(L^2)}+||\partial_zQ_{11}||_{L^\infty_t(L^2)}\lesssim &||\partial_zQ_{11}(0,\cdot)||_{L^\infty_t(L^2)}+||\textbf u\cdot\nabla Q_{11}||_{L^2_t(L^2)}+||Q_{12}\omega||_{L^2_t(L^2)}+||Q_{13}\partial_zu||_{L^2_t(L^2)}\\
+&a||Q_{11}||_{L^2_t(L^2)}+b||Q^2_{11}||_{L^2_t(L^2)}+c||Q^3_{11}||_{L^2_t(L^2)}
\end{aligned}
\end{equation}
similarly,
\begin{equation}
\label{partialtQ3}
\begin{aligned}
&||\partial_t\partial_iQ_{11}||_{L^2_t(L^2)}+||\partial_z\partial_iQ_{11}||_{L^\infty_t(L^2)}\\\lesssim &||\partial_z\partial_iQ_{11}(0,\cdot)||_{L^\infty_t(L^2)}+||\partial_i(\textbf u\cdot\nabla Q_{11})||_{L^2_t(L^2)}+||\partial_i(Q_{12}\omega)||_{L^2_t(L^2)}+||\partial_i(Q_{13}\partial_zu)||_{L^2_t(L^2)}\\
+&a||\partial_iQ_{11}||_{L^2_t(L^2)}+b||Q_{11}\partial_iQ_{11}||_{L^2_t(L^2)}+c||Q^2_{11}\partial_iQ_{11}||_{L^2_t(L^2)}
\end{aligned}
\end{equation}
and the lemma is proved by using the result of Lemma  \ref{lemtough1}.
\end{proof}
\section{Convergence to the hydrostatic system}
\label{section4}
In this section we prove Theorem \ref{thmconvergence1} about the convergence of anisotropic system towards hydrostatic system.
\begin{proof}
Simiarly as \eqref{matrix1} to \eqref{matrix3}, we define the matrix $\f S_1, \f S_2, \f S_3$ of the rotation term $Q_{ij}$, which is
\begin{equation}
\label{matrix4}
\f S_1:=\omega
\begin{pmatrix}
-Q_{12}&
\frac 12(Q_{11}-Q_{22})&
0\\
\frac 12(Q_{11}-Q_{22})&
Q_{12}&
0\\
0&
0&0
\end{pmatrix}
\end{equation}
\begin{equation}
\label{matrix5}
\f S_2:=-\partial_zu
\begin{pmatrix}
-Q_{13}&
-\frac 12Q_{23}&
\frac 1{2\va}( Q_{11}-Q_{33})\\
-\frac 12Q_{23}&
0&
\frac{1}{2\varepsilon}Q_{12}\\
\frac 1{2\va}( Q_{11}-Q_{33})&
\frac{1}{2\varepsilon}Q_{12}&
Q^\va_{13}
\end{pmatrix}
\end{equation}
\begin{equation}
\label{matrxi6}
\quad\f S_3:=-\partial_zv
\begin{pmatrix}
0&
-\frac 12Q_{13}&
\frac 1{2\va}Q_{12}\\
-\frac 12Q_{13}&
-Q_{23}&
\frac{1}{2\varepsilon}(Q_{22}-Q_{33})\\
\frac 1{2\va}Q_{12}&
\frac{1}{2\va}(Q_{22}-Q_{33})&
Q_{23}^\va
\end{pmatrix}
\end{equation}
define $\tilde\omega^\va:=\omega_0^\va-\omega=\partial_x\f v^\va-\partial_y\f u^\va$. We set
\begin{equation}
\begin{aligned}
&\f H^\va(t):=\frac 12||(\f u^\va, \f v^\va, \va \f w^\va) ||_{L^2}^2+\frac 12||(\f Q_{11}^\va, \f Q_{12}^\va, \f Q_{21}^\va, \f Q_{22}^\va, \f Q_{33}^\va ,\va\f Q_{13}^\va, \va \f Q_{23}^\va ,\va\f Q_{31}^\va, \va \f Q_{32}^\va) ||_{L^2}^2\\
+&\frac 12||\partial_\va(\f Q_{11}^\va, \f Q_{12}^\va, \f Q_{21}^\va, \f Q_{22}^\va, \f Q_{33}^\va ,\va\f Q_{13}^\va, \va \f Q_{23}^\va ,\va\f Q_{31}^\va, \va \f Q_{32}^\va) ||_{L^2}^2
\end{aligned}
\end{equation}
Notice that $\f H^\va_0\lesssim \va^2$. We set
\begin{equation}
\begin{aligned}
&T^*:=\sup\{t\ge 0, \text{such that}\,\, \f H^\va(t)\lesssim \va^2\,\,\text{for any}\,\,t\ge 0\}
\end{aligned}
\end{equation}
thus for any $t\in [0, T^*)$, we have
\begin{equation}
\label{toughc1}
\begin{aligned}
&\frac d{dt}\f H^\va(t)+\nu_1||\partial_\va(\f u^\va, \f v^\va, \va \f w^\va) ||_{L^2}^2+\nu_2||\partial_\va(\f Q_{11}^\va, \f Q_{12}^\va, \f Q_{21}^\va, \f Q_{22}^\va, \f Q_{33}^\va ,\va\f Q_{13}^\va, \va \f Q_{23}^\va ,\va\f Q_{31}^\va, \va \f Q_{32}^\va) ||_{L^2}^2\\
&+\nu_2||\partial_\va^2(\f Q_{11}^\va, \f Q_{12}^\va, \f Q_{21}^\va, \f Q_{22}^\va, \f Q_{33}^\va ,\va\f Q_{13}^\va, \va \f Q_{23}^\va ,\va\f Q_{31}^\va, \va \f Q_{32}^\va) ||_{L^2}^2:=\sum_{i=0}^{14}\f T_{i}
\end{aligned}
\end{equation}
where
\begin{equation}
\label{toughdetails1}
\f T_0:=\va^2\nu_1\left[(\Delta_hu, \f u^\va)_{L^2}+(\Delta_hv, \f v^\va)_{L^2}+\va^2(\Delta_hw, \f w^\va)_{L^2}\right]-\va^2(\partial_tw, \f w^\va)_{L^2}
\end{equation}
\begin{equation}
\label{toughdetails2}
\begin{aligned}
\f T_1:=&-(\f u^\va\partial_xu+u^\va\partial_x\f u^\va, \f u^\va)_{L^2}-(\f v^\va\partial_yu+v^\va\partial_y\f u^\va, \f u^\va)_{L^2}-(\f w^\va\partial_zu+w^\va\partial_z\f u^\va, \f u^\va)_{L^2}-(\f u^\va\partial_xv+u^\va\partial_x\f v^\va, \f v^\va)_{L^2}\\&-(\f v^\va\partial_yv+v^\va\partial_y\f v^\va, \f v^\va)_{L^2}-(\f w^\va\partial_zv+w^\va\partial_z\f v^\va, \f v^\va)_{L^2}-\va^2(u^\va\partial_xw^\va+v^\va\partial_yw^\va+w^\va\partial_zw^\va, \f w^\va)_{L^2}
\end{aligned}
\end{equation}
\begin{equation}
\label{toughdetails3}
\begin{aligned}
\f T_2:=&\va^4(M_{11},\partial_x\f u^\va)_{L^2}+\va^4(M_{21},\partial_y\f u^\va)_{L^2}+\va^4(M_{31},\partial_z\f u^\va)_{L^2}+\va^4(M_{12},\partial_x\f v^\va)_{L^2}\\&+\va^4(M_{22},\partial_y\f v^\va)_{L^2}+\va^4(M_{32},\partial_z\f v^\va)_{L^2}+\va^4(M_{13},\partial_x\f w^\va)+\va^4(M_{32},\partial_y\f w^\va)+\va^4(M_{33},\partial_z\f w^\va)
\end{aligned}
\end{equation}
\begin{equation}
\label{toughdetails4}
\begin{aligned}
\f T_3:&=\sum_{(i,j)\ne (1,3), (2,3)}\va^2\nu_2 (\Delta_hQ_{ij},\f Q_{ij}^\va)_{L^2}+\sum_{(i,j)\ne (1,3), (2,3)}\va^4\nu_2 (\Delta_hQ_{i3},\f Q_{i3}^\va)_{L^2}\\&-2\va^4(\partial_tQ_{13}, \f Q_{13}^\va)_{L^2}-2\va^4(\partial_tQ_{23}, \f Q_{23}^\va)_{L^2}\\
&-\sum_{(i,j)\ne (1,3), (2,3)}(\f u^\va\partial_x Q_{ij}+u^\va\partial_x\f Q_{ij}^\va +\f v^\va\partial_y Q_{ij}+v^\va\partial_y\f Q_{ij}^\va+\f w^\va\partial_z Q_{ij}+ w^\va\partial_z\f Q_{ij}^\va, \f Q_{ij}^\va)_{L^2}\\
&-2\sum_{j=1}^2\va^2(\f u^\va\partial_x Q_{i3}+u^\va\partial_x\f Q_{i3}^\va +\f v^\va\partial_y Q_{i3}+ v^\va\partial_y\f Q_{i3}^\va+\f w^\va\partial_z Q_{i3}+w^\va\partial_z\f Q_{i3}^\va, \f Q_{i3}^\va)_{L^2}
\end{aligned}
\end{equation}
\begin{equation}
\label{toughdetails4}
\begin{aligned}
\f T_4:&=\sum_{k=1}^3\sum_{(i,j)\ne (1,3), (2,3)}(\f S^\va_k-\f S_k, \f Q^\va_{ij})_{L^2}+2\va\sum_{k=1}^3\sum_{(i,j)= (1,3)\,\,\text{or}\,\, (2,3)}(\f S^\va_k-\f S_k, \f Q^\va_{ij})_{L^2}
\end{aligned}
\end{equation}
\begin{equation}
\label{toughdetails9}
\begin{aligned}
\f T_{5}&:=(\varepsilon^4\nu_2\partial_x^2Q_{13}+\varepsilon^4 \nu_2\partial_y^2Q_{13}+\varepsilon^2 \nu_2\partial_z^2Q_{13}, \f Q_{13}^\va)_{L^2}+(\varepsilon^4 \nu_2\partial_x^2Q_{23}+\varepsilon^4\nu_2\partial_y^2Q_{23}+\varepsilon^2 \nu_2\partial_z^2Q_{23}, \f Q_{23}^\va)_{L^2}
\end{aligned}
\end{equation}
\begin{equation}
\label{toughdetails10}
\begin{aligned}
\f T_{6}&:=-a||(\f Q_{11}^\va, \f Q_{12}^\va, \f Q_{21}^\va,\f Q_{22}^\va, \f Q_{33}^\va, \va\f Q_{13}^\va, \va\f Q_{23}^\va, \va\f Q_{31}^\va, \va\f Q_{32}^\va)||_{L^2}^2\\
\end{aligned}
\end{equation}
\begin{equation}
\label{toughdetails11}
\begin{aligned}
\f T_{7}&:=\frac b3((2Q_{11}+\f Q_{11}^\va)\f Q_{11}^\va+(2Q_{12}+\f Q_{12}^\va)\f Q_{12}^\va-2(2Q_{22}+\f Q_{22}^\va)\f Q_{22}^\va-2Q_{11}\f Q_{22}^\va-2\f Q_{11}^\va Q_{22}-2\f Q_{11}^\va \f Q_{22}^\va, \f Q_{11}^\va)_{L^2}\\
&+\frac b3((2Q_{22}+\f Q_{22}^\va)\f Q_{11}^\va+(2Q_{12}+\f Q_{12}^\va)\f Q_{12}^\va-2(2Q_{11}+\f Q_{11}^\va)\f Q_{11}^\va-2Q_{11}\f Q_{22}^\va-2\f Q_{11}^\va Q_{22}-2\f Q_{11}^\va \f Q_{22}^\va, \f Q_{22}^\va)_{L^2}\\
&+2b(Q_{11}\f Q_{12}^\va+\f Q_{11}^\va Q_{12}+\f Q_{11}^\va\f Q_{12}^\va+Q_{12}\f Q_{22}^\va+\f Q_{12}^\va Q_{22}+\f Q_{12}^\va\f Q_{22}^\va ,\f Q_{12}^\va)_{L^2}\\
&+\frac {\va ^2b}3((Q_{13}+\f Q_{13}^\va)^2, \f Q_{11}^\va-2\f Q_{22}^\va)_{L^2}+\frac {\va ^2b}3( (Q_{23}+\f Q_{23}^\va)^2, -2\f Q_{11}^\va+\f Q_{22}^\va)_{L^2}+2\va^2b ((Q_{13}+\f Q_{13}^\va)(Q_{23}+\f Q_{23}^\va), \f Q_{12}^\va)_{L^2}\\
&+\va^2b( -Q_{22}Q_{13}-\f Q_{22}^\va Q_{13}-Q_{22}\f Q_{13}^\va-\f Q_{22}^\va \f Q_{13}^\va+ Q_{12}Q_{23}+\f Q_{12}^\va Q_{23}+Q_{12}\f Q_{23}^\va+\f Q_{12}^\va \f Q_{23}^\va,\f Q_{13}^\va)_{L^2}\\
&+\va^2b( Q_{12}Q_{13}+\f Q_{12}^\va Q_{13}+Q_{12}\f Q_{13}^\va+\f Q_{12}^\va \f Q_{13}^\va- Q_{11}Q_{23}-\f Q_{11}^\va Q_{23}-Q_{11}\f Q_{23}^\va-\f Q_{11}^\va \f Q_{23}^\va, \f Q_{23}^\va)_{L^2}
\end{aligned}
\end{equation}
\begin{equation}
\label{toughdetails12}
\begin{aligned}
\f T_{8}&:=-2c((Q_{11}+\f Q_{11}^\va)\text{tr}((Q^\va)^2)-Q_{11}\text{tr}(Q^2), \f Q_{11}^\va)_{L^2}-4c((Q_{12}+\f Q_{12}^\va)\text{tr}((Q^\va)^2)-Q_{12}\text{tr}(Q^2), \f Q_{12}^\va)_{L^2}\\&-2c((Q_{22}+\f Q_{22}^\va)\text{tr}((Q^\va)^2)-Q_{22}\text{tr}(Q^2), \f Q_{22}^\va)_{L^2}-c||(\va Q_{13}^\va, \va Q_{23}^\va)||^2_{L^2}\int\text{tr}((Q^\va)^2)\\
\end{aligned}
\end{equation}
\begin{equation}
\label{toughdetails13}
\begin{aligned}
\f T_{9}:=&\sum_{(i,j)\ne (1,3), (2,3)}\va^2\nu_2 (\partial_\va\Delta_hQ_{ij},\partial_\va\f Q_{ij}^\va)_{L^2}+\sum_{(i,j)\ne (1,3), (2,3)}\va^4\nu_2 (\partial_\va\Delta_hQ_{i3},\partial_\va\f Q_{i3}^\va)_{L^2}\\&-2\va^4(\partial_\va\partial_tQ_{13}, \partial_\va\f Q_{13}^\va)_{L^2}-2\va^4(\partial_\va\partial_tQ_{23}, \partial_\va\f Q_{23}^\va)_{L^2}\\
&-\sum_{(i,j)\ne (1,3), (2,3)}(\partial_\va(\f u^\va\partial_x Q_{ij}) +\partial_\va({\f v^\va\partial_y Q_{ij}})+\partial_\va(\f w^\va\partial_z Q_{ij}), \partial_\va\f Q_{ij}^\va)_{L^2}\\
&-\sum_{(i,j)\ne (1,3), (2,3)}(\partial_\va(u+\f u^\va)\partial_x\f Q_{ij}^\va +\partial_\va(v+\f v^\va)\partial_y\f Q_{ij}^\va+\partial_\va(w+\f w^\va)\partial_z\f Q_{ij}^\va, \partial_\va\f Q_{ij}^\va)_{L^2}\\
&-2\sum_{j=1}^2\va^2(\partial_\va(\f u^\va\partial_x Q_{i3})+\partial_\va(\f v^\va\partial_y Q_{i3}+\partial_\va(\f w^\va\partial_z Q_{i3}), \partial_\va\f Q_{i3}^\va)_{L^2}\\
&-2\sum_{j=1}^2\va^2(\partial_\va(\partial_\va(u+\f u^\va)\partial_x\f Q_{i3}^\va + \partial_\va(v+\f v^\va)\partial_y\f Q_{i3}^\va+\partial_\va(w+\f w^\va)\partial_z\f Q_{i3}^\va, \partial_\va\f Q_{i3}^\va)_{L^2}
\end{aligned}
\end{equation}
\begin{equation}
\label{toughdetails14}
\begin{aligned}
\f T_{10}:&=\sum_{k=1}^3\sum_{(i,j)\ne (1,3), (2,3)}(\partial_\va\f S^\va_k-\partial_\va\f S_k, \partial_\va\f Q^\va_{ij})_{L^2}+2\va\sum_{k=1}^3\sum_{(i,j)=(1,3)\,\,\text{or}\,\, (2,3)}(\partial_\va\f S^\va_k-\partial_\va\f S_k, \partial_\va\f Q^\va_{ij})_{L^2}
\end{aligned}
\end{equation}
\begin{equation}
\label{toughdetails15}
\begin{aligned}
\f T_{11}&:=(\partial_\va(\varepsilon^4\nu_2\partial_x^2Q_{13}+\varepsilon^4 \nu_2\partial_y^2Q_{13}+\varepsilon^2 \nu_2\partial_z^2Q_{13}),\partial_\va \f Q_{13}^\va)_{L^2}+(\partial_\va(\varepsilon^4 \nu_2\partial_x^2Q_{23}+\varepsilon^4\nu_2\partial_y^2Q_{23}+\varepsilon^2 \nu_2\partial_z^2Q_{23}), \partial_\va\f Q_{23}^\va)_{L^2}
\end{aligned}
\end{equation}
\begin{equation}
\label{toughdetails16}
\begin{aligned}
\f T_{12}&:=-a||\partial_\va(\f Q_{11}^\va, \f Q_{12}^\va, \f Q_{21}^\va,\f Q_{22}^\va, \f Q_{33}^\va, \va\f Q_{13}^\va, \va\f Q_{23}^\va, \va\f Q_{31}^\va, \va\f Q_{32}^\va)||_{L^2}^2\\
\end{aligned}
\end{equation}
\begin{equation}
\label{toughdetails1}
\begin{aligned}
\f T_{13}&:=\frac b3(\partial_\va((2Q_{11}+\f Q_{11}^\va)\f Q_{11}^\va+(2Q_{12}+\f Q_{12}^\va)\f Q_{12}^\va-2(2Q_{22}+\f Q_{22}^\va)\f Q_{22}^\va-2Q_{11}\f Q_{22}^\va-2\f Q_{11}^\va Q_{22}-2\f Q_{11}^\va \f Q_{22}^\va), \partial_\va\f Q_{11}^\va)_{L^2}\\
&+\frac b3(\partial_\va((2Q_{22}+\f Q_{22}^\va)\f Q_{11}^\va+(2Q_{12}+\f Q_{12}^\va)\f Q_{12}^\va-2(2Q_{11}+\f Q_{11}^\va)\f Q_{11}^\va-2Q_{11}\f Q_{22}^\va-2\f Q_{11}^\va Q_{22}-2\f Q_{11}^\va \f Q_{22}^\va), \partial_\va\f Q_{22}^\va)_{L^2}\\
&+2b(\partial_\va(Q_{11}\f Q_{12}^\va+\f Q_{11}^\va Q_{12}+\f Q_{11}^\va\f Q_{12}^\va+Q_{12}\f Q_{22}^\va+\f Q_{12}^\va Q_{22}+\f Q_{12}^\va\f Q_{22}^\va) ,\partial_\va\f Q_{12}^\va)_{L^2}\\
&+\frac {\va ^2b}3(\partial_\va(Q_{13}+\f Q_{13}^\va)^2, \partial_\va\f Q_{11}^\va-2\partial_\va\f Q_{22}^\va)_{L^2}+\frac {\va ^2b}3(\partial_\va (Q_{23}+\f Q_{23}^\va)^2, -2\partial_\va\f Q_{11}^\va+\f Q_{22}^\va)_{L^2}\\&+2\va^2b (\partial_\va((Q_{13}+\f Q_{13}^\va)(Q_{23}+\f Q_{23}^\va)), \partial_\va\f Q_{12}^\va)_{L^2}\\
&+\va^2b( \partial_\va(-Q_{22}Q_{13}-\f Q_{22}^\va Q_{13}-Q_{22}\f Q_{13}^\va-\f Q_{22}^\va \f Q_{13}^\va+ Q_{12}Q_{23}+\f Q_{12}^\va Q_{23}+Q_{12}\f Q_{23}^\va+\f Q_{12}^\va \f Q_{23}^\va),\partial_\va\f Q_{13}^\va)_{L^2}\\
&+\va^2b( \partial_\va(Q_{12}Q_{13}+\f Q_{12}^\va Q_{13}+Q_{12}\f Q_{13}^\va+\f Q_{12}^\va \f Q_{13}^\va- Q_{11}Q_{23}-\f Q_{11}^\va Q_{23}-Q_{11}\f Q_{23}^\va-\f Q_{11}^\va \f Q_{23}^\va), \partial_\va\f Q_{23}^\va)_{L^2}
\end{aligned}
\end{equation}
\begin{equation}
\label{toughdetails18}
\begin{aligned}
\f T_{14}&:=-2c(\partial_\va((Q_{11}+\f Q_{11}^\va)\text{tr}((Q^\va)^2)-Q_{11}\text{tr}(Q^2))\partial_\va, \partial_\va\f Q_{11}^\va)_{L^2}-4c(\partial_\va((Q_{12}+\f Q_{12}^\va)\text{tr}((Q^\va)^2)-Q_{12}\text{tr}(Q^2)), \partial_\va\f Q_{12}^\va)_{L^2}\\&-2c(\partial_\va((Q_{22}+\f Q_{22}^\va)\text{tr}((Q^\va)^2)-Q_{22}\text{tr}(Q^2)),\partial_\va \f Q_{22}^\va)_{L^2}-c||\partial_\va(\va Q_{13}^\va, \va Q_{23}^\va)||^2_{L^2}\int\text{tr}((Q^\va)^2)\\
\end{aligned}
\end{equation}
now we give the estimates of $\f T_0$ to $\f T_{14}$. From Cauchy-Schwartz inequality, we have
\begin{equation}
\label{toughdetails19}
\f T_0\lesssim \frac{\nu_1}{100}||(\f u^\va, \f v^\va, \va\f w^\va)||_{L^2}^2+\va^4||\Delta_h(u,v, \va w)||^2_{L^2}+\va^2||\partial_t w||_{L^2}^2
\end{equation}
remind that $\partial_xu+\partial_yv+\partial_zw=0, \quad \partial_x\f u^\va+\partial_y\f v^\va+\partial_z\f w^\va=0$, we have
\begin{equation}
\label{toughc2}
\begin{aligned}
\f T_1=&-(\f u^\va\partial_xu+\f v^\va\partial_yu+\f w^\va\partial_zu,\f u^\va)-(\f u^\va\partial_xv+\f v^\va\partial_yv+\f w^\va\partial_zv,\f v^\va)\\&
-\va^2(u\partial_xw+v\partial_yw+w\partial_zw+\f u^\va\partial_xw+\f v^\va\partial_yw+\f w^\va\partial_zw,\f w^\va)\\
\end{aligned}
\end{equation}
remind that $||(\f u^\va, \f v^\va, \va\f w^\va)||_{L^2}, ||\partial_\va (u,v,w)||_{L^\infty}\lesssim\va$. Thus
\begin{equation}
\label{toughc3}
\begin{aligned}
\mathfrak T_1&\lesssim ||(\f u^\va, \f v^\va, \va \f w^\va)||^2_{L^2}+\va^2||u\partial_xw+v\partial_yw+w\partial_zw||_{L^2}^2
\end{aligned}
\end{equation}
for the term $\f T_2$, we have
\[
\f T_2\lesssim ||\partial_\va(\f u^\va, \f v^\va, \f w^\va)||_{L^2}||\partial^2_\va(\f Q_{11}^\va, \f Q_{12}^\va, \va\f Q_{13}^\va, \va\f Q_{23}^\va)||_{L^2}
\]
for the term $\f T_3$, remind that $(\f u^\va, \f v^\va, \f w^\va)$ are divergence free, from integration by parts, for any $1\le i,j\le 3$, we have
\[
\va^2(\Delta_h Q_{ij}, \f Q^\va_{ij})_{L^2}=-\va^2(\nabla_h Q_{ij}, \nabla_h\f Q_{ij}^\va)_{L^2}, \quad (u^\va\partial_x\f Q_{ij}^\va+v^\va\partial_y\f Q_{ij}^\va+w^\va\partial_z\f Q_{ij}^\va, \f Q_{ij})_{L^2}=0
\]
this means that
\begin{equation}
\label{toughc4}
\begin{aligned}
\f T_3&\lesssim \nu_2\eta||(\va\nabla_h \f Q^\va_{11}, \va\nabla_h \f Q^\va_{12}, \va\nabla_h \f Q^\va_{12})||_{L^2}^2+\eta||(\va \f Q_{13}^\va, \va\f Q_{23}^\va)||_{L^2}^2+\eta||(\f u^\va, \f v^\va)||_{L^2}\\
&+\frac{\va^6}{\eta}||(\partial_tQ_{13},\partial_tQ_{23})||^2_{L^2} +\frac{\va^2\nu_2}{\eta}||(\nabla_h Q_{11}, \nabla_h Q_{12}, \nabla_h Q_{22}, \va\nabla_h Q_{13}, \va\nabla_hQ_{23})||_{L^2}^2\\
&+||(\partial_z Q_{11}, \partial_zQ_{12}, \partial_zQ_{22}, \va\partial_zQ_{13}, \va\partial_zQ_{23})||_{L^2}^2
\end{aligned}
\end{equation}
for the term $\f T_4$, remind that $||(Q_{11}, Q_{12}, Q_{22})||_{L^\infty}\lesssim \va$, we have
\begin{equation*}
\begin{aligned}
&\int{(\omega+\tilde\omega^\va_1)(Q_{12}+\f Q_{12}^\va)(-\f Q^\va_{11}+\f Q^\va_{22})}+\int {(\omega+\tilde\omega^\va_1)(Q_{11}-Q_{22}+\f Q_{11}^\va-\f Q_{12}^\va)\f Q_{12}^\va}\\&
-\int{\omega Q_{12}(-\f Q^\va_{11}+\f Q^\va_{22})}-\int {\omega(Q_{11}-Q_{22})\f Q^\va_{12}}\\&
=\int \tilde\omega^\va_1 [Q_{12}(-\f Q_{11}^\va+\f Q_{22}^\va)-(Q_{11}-Q_{22})\f Q_{12}^\va]\lesssim ||\va\nabla_h\f u^\va||_2^2+||(\f Q^\va_{11}, \f Q^\va_{12}, \f Q^\va_{22})||^2_{L^2}
\end{aligned}
\end{equation*}
moreover, 
\begin{equation*}
\begin{aligned}
&\int{(\va^2\partial_x(w+\f w^\va)-\partial_z(u+\f u^\va))(Q_{13}+\f Q_{13}^\va)(-\f Q^\va_{11}+\f Q^\va_{33})}\\+&\int {(\va^2\partial_x(w+\f w^\va)-\partial_z(u+\f u^\va))(Q_{11}-Q_{33}+\f Q_{11}^\va-\f Q_{33}^\va)\f Q_{13}^\va}\\
+&\int{\partial_zu Q_{13}(-\f Q^\va_{11}+\f Q^\va_{23})}+\int {\partial_zu(Q_{11}-Q_{33})\f Q^\va_{13}}\\
=&\va^2\int\partial_x(w+\f w^\va)(Q_{13}(-\f Q_{11}^\va+\f Q_{33}^\va)+(Q_{11}-Q_{33})\f Q_{13}^\va)
-\int \partial_z\f u^\va(Q_{13}(-\f Q_{11}^\va+\f Q_{33}^\va)+(Q_{11}-Q_{33})\f Q_{13}^\va)\\
\lesssim &||\va^2\partial_x\f w^\va||_{L^2}^2 +||\partial_z\f u^\va||_2^2+||(\f Q^\va_{11}, \f Q^\va_{12}, \f Q^\va_{22}, \va \f Q_{13}^\va, \va \f Q_{23}^\va)||^2_{L^2}\\
+&\va^2||(\partial_xwQ_{11}, \partial_xwQ_{12}, \partial_xwQ_{22}, \va \partial_xwQ_{13}, \partial_xwQ_{13})||^2_{L^2}
\end{aligned}
\end{equation*}
similarly for other terms, we have
\begin{equation}
\label{toughc5}
\begin{aligned}
&\mathfrak T_4\lesssim \va^2||\partial_\va(\f u^\va, \f v^\va, \f w^\va)||^2_{L^2}+||(\f Q^\va_{11}, \f Q^\va_{12}, \f Q^\va_{22}, \va \f Q_{13}^\va, \va \f Q_{23}^\va)||^2_{L^2}\\&+\va^2||(\partial_xwQ_{11}, \partial_xwQ_{12}, \partial_xwQ_{22}, \va \partial_xwQ_{13}, \partial_xwQ_{13})||^2_{L^2}
\end{aligned}\end{equation}
next, we have
\[
\f T_{5}+\nu_2||\partial_\va (\va \f Q_{13}^\va, \va \f Q_{23}^\va)||_{L^2}^2\le \nu_2||\partial_\va (\va \f Q_{13}^\va, \va \f Q_{23}^\va)||_{L^2}||\partial_\va (\va Q_{13}, \va Q_{23})||_{L^2}
\]
thus
\begin{equation}
\label{toughc9}
\begin{aligned}
\f T_{5}+\frac{\nu_2}2||\partial_\va (\va \f Q_{13}^\va, \va \f Q_{23}^\va)||_{L^2}^2\le \nu_2||\partial_\va (\va Q_{13}, \va Q_{23})||^2_{L^2}
\end{aligned}
\end{equation}
moreover,
\begin{equation}
\label{toughc10}
\begin{aligned}
 \f T_{7}&\lesssim b ||(Q_{11}, Q_{12}, Q_{22})||^2_{L^2}+b||(Q_{11}, Q_{12}, Q_{22})||^4_{L^4}\\&+b||(\f Q^\va_{11}, \f Q^\va_{12}, \f Q^\va_{22},  \va\f Q^\va_{13},  \f Q^\va_{23})||^2_{L^2}+b||(\f Q^\va_{11}, \f Q^\va_{12}, \f Q^\va_{22}, \va \f Q_{13}^\va, \va \f Q_{23}^\va)||_{L^4}^4\\
 &+b\va^4||(Q_{13}, Q_{23})||_{L^4}^4+b\va^2||(Q_{13}, Q_{23})||_{L^2}^2
 \end{aligned}
\end{equation}
for the term $\f T_{8}$, notice that
\begin{equation*}
\begin{aligned}
 &\f T_{8}+2c ||(\f Q^\va_{11}, \f Q^\va_{12}, \f Q^\va_{22}, \va\f Q_{13}^\va, \va\f Q_{23}^\va)||_{L^2}^2\int\text{tr}((Q^\va)^2)+4c||(Q_{11}\f Q_{11}^\va, Q_{12}\f Q_{12}^\va, Q_{22}\f Q_{22}^\va)||_{L^2}^2\\
&=-2c((\f Q_{11}^\va)^2+(2Q_{12}+\f Q_{12}^\va)\f Q_{12}^\va+(2Q_{22}+\f Q_{22}^\va)\f Q_{22}^\va+Q_{11}\f Q_{22}^\va+2\f Q_{11}^\va Q_{22}+2\f Q_{11}^\va \f Q_{22}^\va, Q_{11}\f Q_{11}^\va)_{L^2}\\
  &-2c((2Q_{11}+\f Q_{11}^\va)\f Q_{11}^\va+(\f Q_{12}^\va)^2+(2Q_{22}+\f Q_{22}^\va)\f Q_{22}^\va+Q_{11}\f Q_{22}^\va+2\f Q_{11}^\va Q_{22}+2\f Q_{11}^\va \f Q_{22}^\va, 2Q_{12}\f Q_{12}^\va)_{L^2}\\
   &-2c((2Q_{11}+\f Q_{11}^\va)\f Q_{11}^\va+(2Q_{12}+\f Q_{12}^\va)\f Q_{12}^\va+(\f Q_{22}^\va)^2+Q_{11}\f Q_{22}^\va+2\f Q_{11}^\va Q_{22}+2\f Q_{11}^\va \f Q_{22}^\va, Q_{22}\f Q_{22}^\va)_{L^2}\\
   &-2c\va^2((Q_{13}+\f Q^\va_{13})^2+(Q_{23}+\f Q^\va_{23})^2, Q_{11}\f Q_{11}^\va+2Q_{12}\f Q_{12}^\va+Q_{22}\f Q_{22}^\va)_{L^2}\\
 \end{aligned}
\end{equation*}
thus
\begin{equation}
\label{toughc11}
\begin{aligned}
&\f T_{8}+c||(\f Q_{11}^\va, \f Q_{12}^\va, \f Q_{22}^\va, \va\f Q_{13}^\va, \va\f Q_{23}^\va)||^4_{L^4}\\
 &\lesssim c(||(Q_{11},Q_{12},Q_{22}, \va Q_{13}, \va Q_{23})||_{L^2}^2+||(Q_{11},Q_{12},Q_{22}, \va Q_{13}, \va Q_{23})||_{L^4}^4+||(\f Q_{11}^\va, \f Q_{12}^\va, \f Q_{22}^\va, \va \f Q_{13}^\va,  \va \f Q_{23}^\va)||^2_{L^2})\\
 \end{aligned}
\end{equation}
 next, for term $\f T_{9}$, similarly as $\f T_3$, we have
 \begin{equation*}
\begin{aligned}
&\sum_{(i,j)\ne (1,3), (2,3)}\va^2\nu_2 (\partial_\va\Delta_hQ_{ij},\partial_\va\f Q_{ij}^\va)_{L^2}+\sum_{(i,j)\ne (1,3), (2,3)}\va^4\nu_2 (\partial_\va\Delta_hQ_{i3},\partial_\va\f Q_{i3}^\va)_{L^2}\\&-2\va^4(\partial_\va\partial_tQ_{13}, \partial_\va\f Q_{13}^\va)_{L^2}-2\va^4(\partial_\va\partial_tQ_{23}, \partial_\va\f Q_{23}^\va)_{L^2}\\
&\lesssim\frac{\nu_2\eta}{200}||\partial_\va^2 (\f Q_{11}^\va, \f Q_{12}^\va, \f Q_{22}^\va, \va \f Q_{13}^\va, \va\f Q_{23}^\va)||^2_{L^2}\\&+\va^4||(\Delta_h Q_{11}, \Delta_hQ_{12}, \Delta_h Q_{22}, \va \Delta _hQ_{13}, \va \Delta_h Q_{23}, \va\partial_tQ_{13}, \va\partial_tQ_{23})||_{L^2}^2\\
\end{aligned}
\end{equation*}
and remind that $||\partial_\va (Q_{11}, Q_{12}, Q_{22}, \va Q_{13}, \va Q_{23})||_{L^\infty}\lesssim \va$, we have
 \begin{equation*}
\begin{aligned}
&-\sum_{1\le i,j\le 2}(\partial_\va(\f u^\va\partial_x Q_{ij})+\partial_\va({\f v^\va\partial_y Q_{ij}})+\partial_\va(\f w^\va\partial_z Q_{ij}), \partial_\va\f Q_{ij}^\va)_{L^2}\\
&-\sum_{1\le i,j\le 2}(\partial_\va(\partial_\va(u+\f u^\va)\partial_x\f Q_{ij}^\va+\partial_\va(v+\f v^\va)\partial_y\f Q_{ij}^\va+\partial_\va(w+\f w^\va)\partial_z\f Q_{ij}^\va, \partial_\va\f Q_{ij}^\va)_{L^2}\\
&\lesssim\frac{\nu_2\eta}{200}||\partial_\va^2 (\f Q_{11}^\va, \f Q_{12}^\va, \f Q_{22}^\va, \va \f Q_{13}^\va, \va\f Q_{23}^\va)||^2_{L^2}+||\partial_\va(\f u^\va, \f v^\va, \va \f w^\va)||_{L^2}^2\\
&+||\partial_\va (\f Q_{11}^\va, \f Q_{12}^\va, \f Q_{22}^\va, \va\f Q_{13}^\va, \va\f Q_{23}^\va)||_{L^2}^2+||\partial_\va (\f Q_{11}^\va, \f Q_{12}^\va, \f Q_{22}^\va, \va\f Q_{13}^\va, \va\f Q_{23}^\va)||_{L^4}^4\\
\end{aligned}
\end{equation*}
as a result, we have
\begin{equation}
\label{toughc12}
\begin{aligned}
\f T_{9}&\lesssim\frac{\nu_2\eta}{100}||\partial_\va^2 (\f Q_{11}^\va, \f Q_{12}^\va, \f Q_{22}^\va, \va \f Q_{13}^\va, \va\f Q_{23}^\va)||^2_{L^2}\\&+\va^4||(\Delta_h Q_{11}, \Delta_hQ_{12}, \Delta_h Q_{22}, \va \Delta_h Q_{13}, \va \Delta_h Q_{23}, \va\partial_tQ_{13}, \va\partial_tQ_{23})||_{L^2}^2\\
&+||\partial_\va(\f u^\va, \f v^\va, \va \f w^\va)||_{L^2}^2+||\partial_\va (\f Q_{11}^\va, \f Q_{12}^\va, \f Q_{22}^\va, \va\f Q_{13}^\va, \va\f Q_{23}^\va)||_{L^2}^2+||\partial_\va (\f Q_{11}^\va, \f Q_{12}^\va, \f Q_{22}^\va, \va\f Q_{13}^\va, \va\f Q_{23}^\va)||_{L^4}^4\\
\end{aligned}
\end{equation}
next, for term $\f T_{10}$, remind that $||(Q_{11}, Q_{12},Q_{22}, \va Q_{13}, \va Q_{13})||_{L^\infty}\lesssim \va$, we have

\begin{equation*}
\begin{aligned}
&\int{\partial_\va((\omega+\tilde\omega^\va_1)(Q_{12}+\f Q_{12}^\va))\partial_\va(-\f Q^\va_{11}+\f Q^\va_{22})}+\int {\partial_\va((\omega+\tilde\omega^\va_1)(Q_{11}-Q_{22}+\f Q_{11}^\va-\f Q_{12}^\va))\partial_\va\f Q_{12}^\va}\\&
-\int{\partial_\va(\omega Q_{12})\partial_\va(-\f Q^\va_{11}+\f Q^\va_{22})}-\int {\partial_\va(\omega(Q_{11}-Q_{22}))\partial_\va\f Q^\va_{12}}\\
&=-\int \partial_z\omega\f Q_{12}^\va \partial_\va^2(-\f Q_{11}^\va+\f Q^\va_{22})+\int \partial_z\omega(\f Q_{11}^\va-\f Q_{22}^\va) \partial_\va^2\f Q^\va_{12}\\
&-\int \tilde\omega^\va\f Q_{12}^\va \partial_\va^2(-\f Q_{11}^\va+\f Q^\va_{22})+\int \tilde \omega^\va(\f Q_{11}^\va-\f Q_{22}^\va) \partial_\va^2\f Q^\va_{22}\\
&-\int \partial_z\tilde \omega^\va Q_{12} \partial_\va^2(-\f Q_{11}^\va+\f Q^\va_{22})+\int \partial_z\tilde\omega^\va(Q_{11}-Q_{22}) \partial_\va^2\f Q^\va_{22}\\
&\lesssim||\partial_\va^2(\f Q_{11}^\va, \f Q_{12}^\va, \f Q_{22}^\va)||^2_{L^2}+||\va\partial_h(\f u^\va, \f v^\va)||_{L^2}+||(\f Q_{11}^\va, \f Q_{12}^\va, \f Q_{22}^\va)||^2_{L^2}
\end{aligned}
\end{equation*}
and
\begin{equation*}
\begin{aligned}
&\int{\partial_\va[(\va^2\partial_x(w+\f w^\va)-\partial_z(u+\f u^\va))(Q_{13}+\f Q_{13}^\va)]\cdot\partial_\va(-\f Q^\va_{11}+\f Q^\va_{33})}\\
+&\int {\partial_\va[(\va^2\partial_x(w+\f w^\va)-\partial_z(u+\f u^\va))(Q_{11}-Q_{33}+\f Q_{11}^\va-\f Q_{33}^\va)]\partial_\va\f Q_{13}^\va}\\
+&\int{\partial_\va(\partial_zu Q_{13})\partial_\va(-\f Q^\va_{11}+\f Q^\va_{23})}+\int {\partial_\va(\partial_zu(Q_{11}-Q_{33}))\partial_va\f Q^\va_{13}}\\
=&-\va^2\int\partial_x(w+\f w^\va)(Q_{13}\partial_\va^2(-\f Q_{11}^\va+\f Q_{33}^\va)+(Q_{11}-Q_{33})\partial_\va^2\f Q_{13}^\va)\\
+&\int \partial_zu\f Q_{13}^\va \partial_\va^2(-\f Q_{11}^\va+\f Q^\va_{33})-\int \partial_zu(\f Q_{11}^\va-\f Q_{33}^\va) \partial_\va^2\f Q^\va_{13}\\+&\int \partial_z\f u^\va\f Q_{13}^\va \partial_\va^2(-\f Q_{11}^\va+\f Q^\va_{33})-\int \partial_z\f u^\va(\f Q_{11}^\va-\f Q_{33}^\va) \partial_\va^2\f Q^\va_{13}\\
+&\int \partial_z\f u^\va (Q_{13}\partial_\va^2(-\f Q_{11}^\va+\f Q_{33}^\va)+(Q_{11}-Q_{33})\partial_\va^2\f Q_{13}^\va)\\
\lesssim &||\partial_\va^2(\f Q_{11}^\va, \f Q_{12}^\va, \f Q_{22}^\va, \va\f Q_{13}^\va, \va\f Q_{23}^\va)||^2_{L^2}+||\partial_z\f u^\va||_{L^2}+||\va^2\partial_z\f w^\va||_{L^2}^2\\+&||(\f Q_{11}^\va, \f Q_{12}^\va, \f Q_{22}^\va, \va \f Q_{13}^\va, \va\f Q_{23}^\va)||^2_{L^2}+||\va^2\partial_xw Q_{13}||_{L^2}^2
\end{aligned}
\end{equation*}
after calculating all the other terms, we have
\begin{equation}
\label{toughc15}
\begin{aligned}
\f T_{10}\lesssim &||\partial_\va^2(\f Q_{11}^\va, \f Q_{12}^\va, \f Q_{22}^\va, \va\f Q_{13}^\va, \va\f Q_{23}^\va)||^2_{L^2}+||\partial_\va(\f u^\va, \f v^\va, \f w^\va)||^2_{L^2}\\+&||(\f Q_{11}^\va, \f Q_{12}^\va, \f Q_{22}^\va, \va \f Q_{13}^\va, \va\f Q_{23}^\va)||^2_{L^2}+||\va^2\partial_hw (Q_{13}, Q_{23})||_{L^2}^2
\end{aligned}
\end{equation}
next, we have
\[
\f T_{11}+\nu_2||\partial^2_\va (\va \f Q_{13}^\va, \va \f Q_{23}^\va)||_{L^2}^2\le \nu_2||\partial^2_\va (\va \f Q_{13}^\va, \va \f Q_{23}^\va)||_{L^2}||\partial^2_\va (\va Q_{13}, \va Q_{23})||_{L^2}
\]
thus
\begin{equation}
\label{toughc16}
\begin{aligned}
\f T_{11}+\frac{\nu_2}2||\partial^2_\va (\va \f Q_{13}^\va, \va \f Q_{23}^\va)||_{L^2}^2\le \nu_2||\partial^2_\va (\va Q_{13}, \va Q_{23})||^2_{L^2}
\end{aligned}
\end{equation}
finally, similarly as the estimate of terms $\f T_7, \f T_8$, we have

\begin{equation}
\label{toughc17}
\begin{aligned}
&\f T_{13}+\f T_{14}+\frac c2||\partial_\va((\f Q_{11}^\va)^2, (\f Q_{12}^\va)^2, (\f Q_{22}^\va)^2, (\va\f Q_{13}^\va)^2, (\va\f Q_{23}^\va)^2)||^2_{L^2}\\
 &\lesssim c||(\f Q_{11}^\va, \f Q_{12}^\va, \f Q_{22}^\va, \va \f Q_{13}^\va,  \va \f Q_{23}^\va)||^2_{L^2}\\
 &+c(||(Q_{11},Q_{12},Q_{22}, \va Q_{13}, \va Q_{23})||_{L^2}^2+||\partial_\va((Q_{11})^2,(Q_{12})^2,(Q_{22})^2, (\va Q_{13})^2, (\va Q_{23})^2)||_{L^2}^2)\\
 \end{aligned}
\end{equation}
combine the estimates above, choose $\eta$ small enough, we obtain that for $a,c$ large enough, $\frac d{dt}\f H^\va(t)\lesssim F^\va(t)$, where
\begin{equation}
\label{toughc18}
\begin{aligned}
&F^\va(t):=\va^4||\Delta_h(u,v, \va w)||^2_{L^2}+\va^2||\partial_t w||_{L^2}^2+\va^2||u\partial_xw+v\partial_yw+w\partial_zw||_{L^2}^2\\&
+\va^6||(\partial_tQ_{13},\partial_tQ_{23})||^2_{L^2} +||\partial_\va( Q_{11}, Q_{12}, Q_{22}, \va Q_{13}, \va Q_{23})||_{L^2}^2\\&+\va^2||(\partial_xwQ_{11}, \partial_xwQ_{12}, \partial_xwQ_{22}, \va \partial_xwQ_{13}, \partial_xwQ_{13})||^2_{L^2}+||\partial_\va (\va Q_{13}, \va Q_{23})||^2_{L^2}\\&+||(Q_{11},Q_{12},Q_{22}, \va Q_{13}, \va Q_{23})||_{L^2}^2+||(Q_{11},Q_{12},Q_{22}, \va Q_{13}, \va Q_{23})||_{L^4}^4\\&+\va^4||(\Delta_h Q_{11}, \Delta_hQ_{12}, \Delta_h Q_{22}, \va \Delta_h Q_{13}, \va \Delta_h Q_{23}, \va\partial_tQ_{13}, \va\partial_tQ_{23})||_{L^2}^2+||\va^2\partial_hw (Q_{13}, Q_{23})||_{L^2}^2\\
&+||\partial^2_\va (\va Q_{13}, \va Q_{23})||^2_{L^2}+||\partial_\va((Q_{11})^2,(Q_{12})^2,(Q_{22})^2, (\va Q_{13})^2, (\va Q_{23})^2)||_{L^2}^2
\end{aligned}
\end{equation}
so there exists a constant $\f C_l$, such that
\[
\f H^\va(t)\le \f H^\va(0)+\f C_l\int_0^t F^\va(s)ds
\]
remind that we suppose that $\int_0^\infty F^\va(s)ds\lesssim \va^2$, we obtain that $T^*=\infty$. This finishes the proof of Theorem \ref{thmconvergence1}.
\end{proof}

\begin{appendix}
\section{Scaling calculations}\label{apendixsec:scaling}
 We have
\begin{equation}
\label{tensor3}
\nabla Q\odot\nabla Q=\left(
\begin{array}{ccc}\sum_{ij}(\partial_xQ_{ij})^2& \sum_{ij}\partial_xQ_{ij}\partial_yQ_{ij}& \sum_{ij}\partial_xQ_{ij}\partial_zQ_{ij}\\
 \sum_{ij}\partial_xQ_{ij}\partial_yQ_{ij}& \sum_{ij}(\partial_yQ_{ij})^2& \sum_{ij}\partial_yQ_{ij}\partial_zQ_{ij}\\
 \sum_{ij}\partial_xQ_{ij}\partial_zQ_{ij}& \sum_{ij}\partial_yQ_{ij}\partial_zQ_{ij}&\sum_{ij}(\partial_zQ_{ij})^2
\end{array}
\right)
\end{equation}
Furthermore $(\Delta Q\cdot Q-Q\cdot\Delta Q)_{ij}=-(\Delta Q\cdot Q-Q\cdot\Delta Q)_{ji}$, and
\[
(\Delta Q\cdot Q-Q\cdot\Delta Q)_{12}=-\Delta Q_{12}Q_{11}+\Delta Q_{12}Q_{22}+\Delta(Q_{11}-Q_{22})Q_{12}-\Delta Q_{23}Q_{13}+\Delta Q_{13}Q_{23}
\]
\[
(\Delta Q\cdot Q-Q\cdot\Delta Q)_{13}=-2\Delta Q_{13}Q_{11}-\Delta Q_{13}Q_{22}-\Delta Q_{23}Q_{12}+\Delta (2Q_{11}+Q_{22})Q_{13}+\Delta Q_{12}Q_{23}
\]
\[
(\Delta Q\cdot Q-Q\cdot\Delta Q)_{23}=-\Delta Q_{23}Q_{11}-2\Delta Q_{23}Q_{22}-\Delta Q_{13}Q_{12}+\Delta Q_{12}Q_{13}+\Delta (Q_{11}+2Q_{22})Q_{23}
\]
Define the matrix 
\begin{equation}\label{def:matrixM}
M_{ij}:=(\nabla Q \odot\nabla Q+(\Delta Q)\cdot Q-Q\cdot\Delta Q)_{ij}, 1\le i,j\le 3.
\end{equation}

Furthermore,
\[
\mbox{tr}(Q^2)=2(Q^2_{11,\varepsilon}+Q^2_{12,\varepsilon}+Q^2_{22,\varepsilon}+\varepsilon^2Q^2_{13,\varepsilon}+\varepsilon^2Q^2_{23,\varepsilon}+Q_{11,\varepsilon}Q_{22,\varepsilon})
\]
denote 
\begin{equation}\label{def:omegavareps}
\omega^\varepsilon_0:=\partial_xv^\varepsilon-\partial_yu^\varepsilon,\quad \omega^\varepsilon_1:=\varepsilon^2\partial_xw^\varepsilon-\partial_zu^\varepsilon,\quad \omega^\varepsilon_2:=\varepsilon^2\partial_yw^\varepsilon-\partial_zv^\varepsilon
\end{equation}
then from direct calculations,
\begin{equation}
\label{3omega2}
\Omega^\va=\frac 12\left(
\begin{array}{ccc}
0 & \omega_0^\va&\omega^\va_1/\varepsilon\\
-\omega_0^\va& 0&\omega^\va_2/\varepsilon\\
-\omega^\va_1/\varepsilon&-\omega^\va_2/\varepsilon&0
\end{array}
\right)
\end{equation}

\end{appendix}


\section*{\bf Acknowledgments.}A. Zarnescu and  X.Li have been partially supported by the Basque Government through the BERC 2022- 2025 program and by the Spanish State Research Agency through BCAM Severo Ochoa excellence accreditation SEV-2017-0718.  X. Li has been also supported by Unversit\'e de Toulouse III Paul Sabatier and  by the LTC-Transmath project funded by Fundaci\'on Euskampus. A.Zarnescu has also been supported  through project PID2023-146764NB-I00 funded by Agencia Estatal de Investigaci\'on (PID2023-146764NB-I00/ AEI / 10.13039/501100011033) and also partially supported by a grant of the Ministry of Research, Innovation and Digitization,
CNCS - UEFISCDI, project number PN-III-P4-PCE-2021-0921, within PNCDI III. M.Paicu has been
supported by Universit\'e de Bordeaux.\\

\begin{footnotesize}

\end{footnotesize}
F.De Anna: Institute of Mathematics, University of Wurzburg ,Wurzburg, Germany.\\
E-mail address: francesco.deanna@uni-wuerzburg.de\\
X.Li: Department of Mathematics and Geosciences, University of Trieste, via Valerio 12/1
Trieste, 34127 Italy. \\
E-mail address: xingyuli92@gmail.com\\
M.Paicu: Universit\'e Bordeaux, Institut de Math\'ematiques de Bordeaux, F-33405 Talence Cedex, France.\\
Email address: mpaicu@math.u-bordeaux1.fr\\
A.Zarnescu: BCAM, Basque Center for Applied Mathematics, Mazarredo 14, E48009 Bilbao, Bizkaia, Spain\\
\&IKERBASQUE, Basque Foundation for Science, Maria Diaz de Haro 3, 48013, Bilbao, Bizkaia, Spain\\
\&Simion Stoilow Institute of Mathematics of the Romanian Academy, P.O. Box 1-764, RO-014700 Bucharest, Romania.\\
Email address: azarnescu@bcamath.org
\end{document}